\numberwithin{equation}{section}
\DeclareMathOperator{\esssup}{ess\,sup}
\newcommand{\margnote}[1]{
\ifthenelse{\boolean{shownotes}}%
{\marginpar{\raggedright\tiny\texttt{#1}}}%
{}%
}
\newcommand{\hole}[1]{
\ifthenelse{\boolean{shownotes}}%
{\begin{center} \fbox{ \rule {.25cm}{0cm}
\rule[-.1cm]{0cm}{.4cm} \parbox{.85\textwidth}{\begin{center}
\texttt{#1}\end{center}} \rule {.25cm}{0cm}}\end{center}}
{}
}
\theoremstyle{plain}
\newtheorem{defi}{Definition}[section]
\newtheorem{theo}{Theorem}[section]
\newtheorem{lema}{Lemma}
\newtheorem{prop}{Proposition}
\newtheorem{coro}{Corollary}
\newtheorem{rem}{Remark}
\theoremstyle{remark}
\theoremstyle{remark}
\begin{document}

\title[Coupling in the Linearization]{The Cauchy problem for a quasilinear system of equations with coupling in the linearization}
\author[F. Angeles]{Felipe Angeles Garc\'ia}
\date{}
\address{Instituto de Matem\'aticas, Universidad Nacional Aut\'onoma de M\'exico, Circuito Exterior s/n, Ciudad Universitaria, Ciudad de M\'exico, C.P. 04510, M\'exico}
\email{teojkd@ciencias.unam.mx}

\begin{abstract}
The Cauchy problem for a quasilinear system of hyperbolic-parabolic equations is addressed with the method of linearization and fixed point. Coupling between the hyperbolic and parabolic variables is allowed in the linearization and we do not assume the Friedrich's symmetrizability of the system. This coupling results in linear energy estimates that prevent the application of Banach's contraction principle. A metric fixed point theorem is developed in order to conclude the local existence and uniqueness of solutions. We show that the boundedness in the high norm and contraction in the low norm can be incorporated into the formulation of the fixed point by introducing the notion of a closed extension of the solution map. We apply our results to the Cattaneo-Christov system for viscous compressible fluid flow, a system of equations whose inviscid part is not hyperbolic.
\end{abstract}
\keywords{Quasilinear systems; hyperbolic-parabolic coupling; local well-posedness; Cattaneo-Christov systems}
\maketitle

\setcounter{tocdepth}{1}

\section{Introduction}
The present work revisits the local in time well-posedness for the Cauchy problem of a system of hyperbolic-parabolic partial differential equations. This problem has been fairly studied before, for example, in the purely hyperbolic symmetric case one can revise \cite{garding}, \cite{katosym},  \cite{lax}, \cite{maj}, \cite{novo}, \cite{rack} and for the composite hyperbolic-parabolic case we have the standard literature \cite{kawa}, \cite{matsu}, \cite{serre2}, \cite{hudja}. Consider the equation, 
\begin{equation}
A^{0}(U)U_{t}+A^{i}(U)\partial_{i}U-B^{ij}(U)\partial_{i}\partial_{j}U+D(U)U=F(U;D_{x}U),
\label{eq:11}
\end{equation}
where the summation convention has been used. Here $U$ stands as the variable to be determined and is such that $U=U(x,t)\in\mathbb{R}^{N}$ for all $(x,t)\in\mathbb{R}^{d}\times[0,T]$ for some $T>0$ given. The coefficients are matrices of order $N\times N$ dependent on $U$. Equations of the form \eqref{eq:11} often appears in compressible fluid dynamics (see \cite{daf}, \cite{gonzalez}, \cite{lionscom}, \cite{novo} and \cite{serre}, for example).\\
We use the \emph{linearization and fixed point} method. The essence of this method is first to obtain sufficiently strong energy estimates for the Cauchy problem of a linear version of \eqref{eq:11} (see equation \eqref{eq:12}) in order to show its well-posedness. Then, such estimates are used to define a Banach space $Y$, a subset $X\subset Y$ and an operator $\mathcal{T}$ such that $\mathcal{T}:X\rightarrow X$ is well-defined as $\mathcal{T}(U)=V$ where, 
\begin{equation}
A^{0}(U)V_{t}+A^{i}(U)\partial_{i}V-B^{ij}(U)\partial_{i}\partial_{j}V+D(U)V=F(U;D_{x}U)
\label{eq:12}
\end{equation}
is the associated linear system, often refer as the \emph{linearization}. One shows that this operator has a unique fixed point, $U_{\infty}\in X$ that corresponds to a unique local solution of the system \eqref{eq:11} satisfying some given initial condition.\\
There are different ways to approach the existence of the fixed point of $\mathcal{T}$. Altough one can use topological fixed point arguments (see \cite{itaya1} and \cite{itaya2} and \cite{hudja} for example), we take focus in the application of metric fixed point theorems. In this case, it is common to define an iteration that approximates the solution of the initial value problem for \eqref{eq:11}, say $\mathcal{T}(V^{k})=V^{k+1}$. Then, the sequence of real numbers $a_{k}:=\|\mathcal{T}(V^{k+1})-\mathcal{T}(V^{k})\|_{y}$ is considered and it has been reported to satisfy two types of inequalities:
\begin{itemize}
	\item [(1)] There is a constant $0<\alpha<1$ such that $a_{k}\leq\alpha a_{k-1}$ (cf. \cite{katosym}, \cite{kawa} and \cite{lax}).
	\item [(2)] There is a constant $0<\alpha_{1}<1$ such that $a_{k}\leq\alpha_{1} a_{k-1}+\beta_{k}$, where $\left\lbrace\beta_{k}\right\rbrace$ is a sequence chosen with the property that $\sum_{k}\beta_{k}<\infty$ (cf. \cite{maj}, \cite{novo}, \cite{rack} and \cite{serre2}).
\end{itemize}
In the first case, this means that the operator $\mathcal{T}$ is a contraction, so Banach's fixed point theorem yields the existence and uniqueness of the fixed point of $\mathcal{T}$. Meanwhile, in the second case, although $\mathcal{T}$ is not a contraction (but almost), the inequality implies that $\sum_{k}a_{k}<\infty$. Hence, $\left\lbrace\mathcal{T}(V^{k})\right\rbrace$ is a Cauchy sequence in $Y$. Then, a fixed point of $\mathcal{T}$ can be shown to exist.\\
Consider Kawashima's results \cite{kawa}. Here, a split is done in the unknown variable of the linear equation \eqref{eq:12}, $V=(\hat{u},\hat{v})^{\top}$, and it is assumed that the \emph{hyperbolic variable} $\hat{u}$ is \emph{decoupled} from the \emph{parabolic variable} $\hat{v}$. This means that a particular block structure on the matrix coefficients of \eqref{eq:12} is assumed, namely,
\begin{equation}
\label{eq:13}
\begin{aligned}
A^{0}_{1}(u,v)\partial_{t}\hat{u}&+A^{i}_{11}(u,v)\partial_{i}\hat{u}=f_{1}(U,D_{x}v),\\
A^{0}_{2}(u,v)\partial_{t}\hat{v}&-B^{ij}_{22}(u,v)\partial_{i}\partial_{j}\hat{v}=f_{2}(U,D_{x}U),
\end{aligned}
\end{equation}
where $U=(u,v)^{\top}\in X$ is given and the matrix coefficients are such that the equation for $\hat{u}$ is symmetric hyperbolic and the equation for $\hat{v}$ is symmetric strongly parabolic. By making this assumption, we can obtain strong enough energy estimates to assure that the mapping $\mathcal{T}$ can be classified in case $(1)$. Furthermore, as equation \eqref{eq:13} suggests, when dealing with the following quasilinear equation,
\small
\begin{equation}
\label{eq:14}
\begin{aligned}
A^{0}_{1}(u,v)\partial_{t}u+A^{i}_{11}(u,v)\partial_{i}u+A^{i}_{12}(u,v)\partial_{i}v&=g_{1}(U,D_{x}v),\\
A^{0}_{2}(u,v)\partial_{t}v+A^{i}_{21}(u,v)\partial_{i}u+A^{i}_{22}(u,v)\partial_{i}v-B^{ij}(u,v)\partial_{i}\partial_{j}v&=g_{2}(U,D_{x}U),
\end{aligned}
\end{equation}
\normalsize
is enough to consider its linear version as \eqref{eq:13} as long as we take 
\begin{equation}
\begin{aligned}
\label{eq:15}
f_{1}(U,D_{x}v)&=g_{1}(U,D_{x}v)-A^{i}_{12}(u,v)\partial_{i}v\\
f_{2}(U,D_{x}U)&=g_{2}(U,D_{x}U)-A^{i}_{21}(u,v)\partial_{i}u-A^{i}_{22}(u,v)\partial_{i}v.
\end{aligned}
\end{equation}
See \cite{itaya2}, \cite{ka}, \cite{matsu} and \cite{matsuni}, for example. In particular, we say that a linear system of the form \eqref{eq:12} has \emph{coupling} between the hyperbolic and parabolic variables if the spatial derivatives of the parabolic variable appear in the evolution equation for the hyperbolic variable.\\
In \cite{serre2}, Serre treats the local existence of the Cauchy problem for a quasilinear system of the form \eqref{eq:11} (with $D(U)=0$ and $F(U,D_{x}U)=0$) but assumes that the system can be derived from a viscous system of conservation laws that are \emph{entropy-dissipative}. Due to this structure, Serre provides his own \emph{normal form}, which is reminiscent to the symmetrizability of the so called \emph{reduced hyperbolic system} (\cite{serrevis}). Moreover, he manages to enlarge the class of initial data to $H^{s}(\mathbb{R}^{d})$ where $s>d/2+1$ (instead of $s\geq d/2+2$ established in \cite{kawa}). In particular, in Serre's treatment of the linear problem, the dissipative term is written in conservative form, $A^{0}$ and $A^{i}$ are symmetric for all $i=1,..,d$ while $A^{0}$ is positive definite. No decoupling assumption is needed to obtain the linear energy estimates and these are strong enough to classify Serre's fixed point argument in case $(2)$.\\
In this work we assume coupling in the linearization and present a third case, one in which $\left\lbrace a_{k}\right\rbrace$ satisfies the inequality
\begin{itemize}
	\item [(3)] $a_{k}\leq\alpha_{0}\left(a_{k-1}+a_{k-2}\right)$ for some $0<\alpha_{0}<\frac{1}{2}$, for all $k\geq 2$.
\end{itemize}
As we will show in Lemma \ref{optimal}, this is enough to prove that $\sum_{k}a_{k}<\infty$ and ultimately conclude the existence of a unique fixed point of $\mathcal{T}$.\\
It is well known that in this type of problems, $X$ is not necessarily a closed subset of $Y$ (see \cite[Lemmas 2.1 and 2.2]{maj}). Thus, in principle, is not clear that $\{\mathcal{T}(V^{K})\}$ converges to an element of $X$. For this reason, in Theorem \ref{fixedpo}, we only provide the existence and uniqueness of a fixed point for an extension of $\mathcal{T}$, say $\widehat{\mathcal{T}}$. Then, in section \ref{Extensionsection} we show that the range of $\widehat{\mathcal{T}}$ has a small gain in regularity implying that its fixed point actually belongs to $X$. Both this property and the existence of $\widehat{\mathcal{T}}$ are consequence of the existence of energy estimates as it is shown in Theorems \ref{surjective} and \ref{extension} respectively.\\
In particular, the method presented highlights the fact that equation \eqref{eq:12} doesn't require a full Friedrichs' symmetrizer in order to obtain the energy estimates. Moreover, we don't even require the hyperbolicity of the non-diffusive part of equations \eqref{eq:11} and \eqref{eq:12} (that is, formally setting $B^{ij}=0$). We provide an example with such property in section \ref{CattaneoC}.
\section{Motivation}
This work has been motivated by the study of the Cattaneo-Christov system for compressible fluid flow (\cite{amp}, \cite{christov}, \cite{zhan}, \cite{jor}, \cite{stra}, \cite{stra2}, \cite{straughan2}, \cite{stra4}). This system comprises the basic equations in compressible fluid dynamics i.e. the conservation of mass, the balance of momentum and the balance of energy, written in quasilinear form, coupled with Christov's evolution equation for the heat flux, namely 
\begin{equation}
\tau\left[q_{t}+v\cdot\nabla q-q\cdot\nabla v+(\nabla\cdot v)q\right]+q=-\kappa\nabla\theta,\label{eq:17}
\end{equation}
where $q$ is the heat flux, $v$ is the velocity field, $\theta$ is the temperature, $\tau$ is the thermal relaxation time and $\kappa> 0$ is the heat conductivity coefficient. Similar couplings with Cattaneo type models for the heat flux have been studied before in many different settings (see \cite{amp}, \cite{chandrasek}, \cite{chjo}, \cite{kawa2}, \cite{qin}, \cite{stra} and \cite{tara} for example).\\
 Christov's equation \eqref{eq:17} is a frame indifferent formulation of the classical model of Cattaneo (\cite{cat}, \cite{straughan2}) that was developed to correct the infinite speed of propagation of heat predicted by Fourier's heat flux model as well as to agree with the objectivity principle in continumm dynamics (\cite{christov}, \cite{amorro}). This system of equations can be written in the quasilinear form \eqref{eq:11} with $U=(\rho, v, q, \theta)\in\mathbb{R}^{2d+2}$ where $d$ is the space dimension. By introducing $q$ as a dynamical variable, the diffusion term is taken out of the equation for the energy, contrary to the Navier-Fourier-Stokes system. Instead, \eqref{eq:17} introduces a zeroth order term in its quasilinear form. This feature suggested the study of its \emph{strict dissipativity}. In lay terms, this property states that solutions to the linearized problem around equilibrium states show some decay structure (see \cite[Definition 1.1]{ka1}). If the linearization of \eqref{eq:11} has a \emph{Friedrichs symmetrizer}, Kawashima showed that its strict dissipativity is equivalent to require that the dissipation terms do not allow solutions of travelling wave type to be, simultaneously, solutions to the associated hyperbolic system without dissipation (see \cite[Theorem 1.1]{ka1}). In consequence, the strict dissipativity depends on the matrix coefficients of the linear system \eqref{eq:12}. Therefore, when verifying the strict dissipativity of the linearization of \eqref{eq:11} the source term in \eqref{eq:12} only has to contain the linearization of the fully non-linear terms (see \cite[equation 2.11]{amp} and \cite[equation 4.8]{ka1}, for example). Any other type of linearization may not represent the original structure of the coefficients of the quasilinear system \eqref{eq:11}.\\
In one space dimension, the linearization of the Cattaneo-Christov system around constant equilibrium states, was shown to be \emph{strictly dissipative} \cite{amp}. In this case, the existence of a symmetrizer, allows for the equivalence between the Kawashima-Shizuta condition and the strict dissipativity of the linear system. In turn, this implies the existence of global linear decay rates (see \cite[Theorem 5.2 and Corollary 5.3]{amp}). To establish the well-posedness of the Cauchy problem for this linear system, we can obtain the energy estimates by standard fashion. However, we found that for certain space $Y$ they are not adequate to yield an operator $\mathcal{T}$ with either properties, $(1)$ or $(2)$. It will be shown that, as consequence of the \emph{coupling terms} in the linearization, the sequence $\left\lbrace a_{k}\right\rbrace$ associated with $\mathcal{T}$ satisfies that $a_{k}\leq\alpha_{0}a_{k-1}+\gamma_{k}$ for all $k\in\mathbb{N}$ for certain $0<\alpha_{0}<1$ to be chosen. But in this case, contrary to the \emph{contractive} cases $(1)$ and $(2)$, the sequence $\gamma_{k}$ cannot be chosen a priori so that $\sum_{k}\gamma_{k}<\infty$ thus leading to a different metric fixed point argument.\\
In several space dimensions, the inviscid Cattaneo-Christov system is not hyperbolic \cite{angeles2021nonhyperbolicity}, hence, it is not \emph{Friedrichs symmetrizable} (\cite{benzoni}, \cite{serre}). In particular, this means that, even if there is a conservative structure for the Cattaneo-Christov system, there will not exist a convex entropy. As consequence, Serre's results \cite{serre2} are not applicable to this case. For this reasons, the hyperbolicity of the non-viscous version of \eqref{eq:11} is not part of the assumptions treated in this work.
\section{Notation and basic results}
By $L^{2}:=L^{2}(\mathbb{R}^{d})$ we denote the standard Hilbert space of measurable functions such that $|f|^{2}$ is integrable over $\mathbb{R}^{d}$ with the usual norm and inner product denoted as $\|\cdot\|$ and $\langle\cdot,\cdot\rangle$, respectively.\\
$L^{\infty}:=L^{\infty}(\mathbb{R}^{d})$ stands as the space of bounded measurable functions over $\mathbb{R}^{d}$, with the norm $\|f\|_{\infty}=\operatorname{esssup}_{x\in\mathbb{R}^{d}}|f(x)|$.\\
We denote $\mathbb{N}$ as the set of natural numbers and $\mathbb{N}_{0}:=\mathbb{N}\cup\{0\}$. If $\alpha\in\mathbb{N}^{d}_{0}$ then $\alpha=(\alpha_{1},....,\alpha_{d})$ with each $\alpha_{i}\in\mathbb{N}_{0}$, which means that $\alpha$ is a multi-index. In particular, for $\alpha\in\mathbb{N}_{0}^{d}$ we write
\[\partial_{x}^{\alpha}\cdot=\frac{\partial^{|\alpha|}\cdot}{\partial^{\alpha_{1}}x_{1}\cdots\partial^{\alpha_{d}}x_{d}},\]
and for $k\in\mathbb{N}$, $D^{k}_{x}f$ is the set of all partial derivatives $\partial_{x}^{\alpha}f$ for $|\alpha|=k$. We agree that $D^{1}\cdot=\nabla\cdot$ and that $D^{0}f=f$.\\
Let $m\in\mathbb{N}_{0}$. We denote $H^{m}:=H^{m}(\mathbb{R}^{d})$ as the standard Sobolev space
\[H^{m}(\mathbb{R}^{d})=\left\lbrace u\in L^{1}_{loc}(\mathbb{R}^{d}):\partial_{x}^{\alpha}u\in L^{2}(\mathbb{R}^{d}),~\forall~\alpha\in\mathbb{N}_{0}^{d}~\mbox{such that}~|\alpha|\leq m\right\rbrace\]
with the norm $\|f\|_{m}=\left(\sum_{|\alpha|\leq m}\|\partial_{x}^{\alpha}f\|^{2}\right)^{1/2}$. This norm is generated by the inner product in $H^{m}$ defined as $\langle f,g\rangle_{m}=\sum_{|\alpha|\leq m}\langle \partial_{x}^{\alpha}f,\partial_{x}^{\alpha}g\rangle$. Note that $H^{0}=L^{2}$ and $\|f\|_{0}=\|f\|$. We also set $\widehat{H}^{m}:=\widehat{H}^{m}(\mathbb{R}^{d})$ as the Banach space
\[\widehat{H}^{m}(\mathbb{R}^{d})=\left\lbrace u\in L^{\infty}(\mathbb{R}^{d}):\nabla u\in H^{m-1}(\mathbb{R}^{d})\right\rbrace\]
with the norm $\|f\|_{\bar{m}}=\|f\|_{L^{\infty}}+\|\nabla f\|_{m-1}$. When $m=0$ we define $\widehat{H}^{0}=L^{\infty}$ and $\|f\|_{\bar{0}}=\|f\|_{\infty}$. The following results can be found in \cite{cherrier}.
\begin{theo}{\cite{cherrier}}
If $s>\frac{d}{2}$ the space $\widehat{H}^{s}$ is an algebra under point by point multiplications. That is, if $f,g\in\widehat{H}^{s}$ their product $fg$ belongs to $\widehat{H}^{s}$ and 
\begin{equation}
\|fg\|_{\bar{s}}\leq C\|f\|_{\bar{s}}\|g\|_{\bar{s}}\label{eq:31}
\end{equation}
where $C$ is a constant independent of $f$ and $g$. More generally, if $f\in\widehat{H}^{s}$ and $g\in H^{r}$, $0\leq r\leq s$, then $fg\in H^{r}$, and 
\begin{equation}
\|fg\|_{r}\leq C\|f\|_{\bar{s}}\|g\|_{r}\label{eq:32}
\end{equation}
where $C$ is a constant independent of $f$ and $g$.
\end{theo}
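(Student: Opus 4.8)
The plan is to establish the second inequality \eqref{eq:32} first and then read off the algebra property \eqref{eq:31} from it. For that reduction, if $f,g\in\widehat{H}^{s}$ then $\|fg\|_{\infty}\le\|f\|_{\infty}\|g\|_{\infty}\le\|f\|_{\bar{s}}\|g\|_{\bar{s}}$ at once, while $\nabla(fg)=f\,\nabla g+g\,\nabla f$ with $\nabla f,\nabla g\in H^{s-1}$, so \eqref{eq:32} applied with $r=s-1\le s$ to each summand gives
\[
\|\nabla(fg)\|_{s-1}\le\|f\,\nabla g\|_{s-1}+\|g\,\nabla f\|_{s-1}\le C\|f\|_{\bar{s}}\|\nabla g\|_{s-1}+C\|g\|_{\bar{s}}\|\nabla f\|_{s-1}\le 2C\|f\|_{\bar{s}}\|g\|_{\bar{s}},
\]
hence $fg\in\widehat{H}^{s}$ and \eqref{eq:31}. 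So the whole content lies in \eqref{eq:32}. To prove it — in the relevant case $s,r\in\mathbb{N}_{0}$ — I would first mollify: replacing $f,g$ by $f\ast\rho_{\varepsilon},g\ast\rho_{\varepsilon}$ reduces matters to smooth functions (the mollified factors satisfy $\|f\ast\rho_{\varepsilon}\|_{\bar{s}}\le\|f\|_{\bar{s}}$ and $\|g\ast\rho_{\varepsilon}\|_{r}\le\|g\|_{r}$), after which one passes to the limit using boundedness in the reflexive space $H^{r}$ together with convergence in $\mathcal{D}'$.

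For $r=0$ the bound is trivial. For $1\le r\le s$ and a multi-index $\alpha$ with $|\alpha|\le r$, I would apply the Leibniz rule
\[
\partial_{x}^{\alpha}(fg)=f\,\partial_{x}^{\alpha}g+\sum_{0<\beta\le\alpha}\binom{\alpha}{\beta}\partial_{x}^{\beta}f\,\partial_{x}^{\alpha-\beta}g
\]
and estimate each term in $L^{2}$. The first is immediate: $\|f\,\partial_{x}^{\alpha}g\|\le\|f\|_{\infty}\|\partial_{x}^{\alpha}g\|\le\|f\|_{\bar{s}}\|g\|_{r}$. For a cross term with $|\beta|=j\ge1$ and $|\alpha-\beta|=|\alpha|-j=:l$ (so $j+l=|\alpha|\le r$), I would use Hölder with the conjugate exponents $p=2s/j$ and $q=2s/(s-j)$, bound the $f$-factor by Gagliardo--Nirenberg,
\[
\|D_{x}^{j}f\|_{L^{2s/j}}\le C\|f\|_{\infty}^{1-j/s}\|D_{x}^{s}f\|^{j/s}\le C\|f\|_{\infty}^{1-j/s}\|\nabla f\|_{s-1}^{j/s}\le C\|f\|_{\bar{s}}
\]
(valid for $0<j<s$; observe that only $f\in L^{\infty}$ and $D_{x}^{s}f\in L^{2}$ enter, the latter being part of $\nabla f\in H^{s-1}$), and bound the $g$-factor $\partial_{x}^{\alpha-\beta}g\in H^{r-l}$ by the Sobolev embedding $H^{r-l}(\mathbb{R}^{d})\hookrightarrow L^{2s/(s-j)}(\mathbb{R}^{d})$. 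The latter holds because
\[
\frac{s-j}{2s}\ \ge\ \frac12-\frac{j}{d}\ \ge\ \frac12-\frac{r-l}{d},
\]
the first inequality being equivalent to $2s\ge d$, i.e. to the standing hypothesis $s>d/2$, and the second reducing to $r-l\ge j$, i.e. to $|\alpha|\le r$. Summing over $\alpha$ and $\beta$ produces \eqref{eq:32}.

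The single configuration left out is $j=|\alpha|=s$ (hence $l=0$ and $r=s$), where $q=2s/(s-j)$ blows up and $D_{x}^{s}f$ cannot be placed in $L^{p}$ for any $p>2$. There I would instead keep $\partial_{x}^{\alpha}f$ in $L^{2}$ and move $g$ to $L^{\infty}$, using $g\in H^{s}\hookrightarrow L^{\infty}$: $\|\partial_{x}^{\alpha}f\cdot g\|\le\|\partial_{x}^{\alpha}f\|\,\|g\|_{\infty}\le C\|\nabla f\|_{s-1}\|g\|_{s}=C\|f\|_{\bar{s}}\|g\|_{r}$, the embedding again demanding exactly $s>d/2$. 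This also covers $d=1$, where $s=1$ forces $j=s$ in every cross term.

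I expect the genuine obstacle — the reason a direct copy of the classical Moser estimate fails — to be that $\nabla f$ need not be bounded: one only knows $\nabla f\in H^{s-1}$, and $s-1$ may not exceed $d/2$, so the usual device of leaving a single derivative of $f$ in $L^{\infty}$ is unavailable. The remedy is to interpolate $D_{x}^{j}f$ between $f\in L^{\infty}$ and $D_{x}^{s}f\in L^{2}$, which forces the exponent $p=2s/j$; the conjugacy $\tfrac1p+\tfrac1q=\tfrac12$ then makes the admissibility of the Sobolev exponent for $g$ exactly equivalent to $s>d/2$. Beyond that, the only labor is the careful checking of these exponent inequalities in all boundary configurations, in particular $|\alpha|=r=s$ and the extreme values $j=1$ and $j=s$.
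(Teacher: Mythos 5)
The paper does not prove this statement: it is quoted verbatim from the reference \cite{cherrier} (Cherrier--Milani), so there is no in-paper argument to compare against. Your proposal is correct and is essentially the standard proof of this Moser-type product estimate as given in that reference: deduce \eqref{eq:31} from \eqref{eq:32} via $\nabla(fg)=f\nabla g+g\nabla f$, and prove \eqref{eq:32} by mollification, the Leibniz rule, H\"older with exponents $2s/j$ and $2s/(s-j)$, the Gagliardo--Nirenberg interpolation $\|D^{j}f\|_{L^{2s/j}}\leq C\|f\|_{\infty}^{1-j/s}\|D^{s}f\|^{j/s}$, and the Sobolev embedding for the $g$-factor, with the borderline term $\beta=\alpha$, $|\alpha|=r=s$ handled by $H^{s}\hookrightarrow L^{\infty}$; your exponent bookkeeping (including the observation that admissibility is exactly $s>d/2$, your ``$2s\geq d$'' being the only, harmless, imprecision) is sound.
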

\begin{theo}{\cite{cherrier}}
Let $m,n,\mbox{and},k\in\mathbb{N}_{0}$ such that $m\geq k$, $n\geq k$, and $m+n-k>\frac{d}{2}$. Let $f\in H^{m}$ and $g\in H^{n}$. Then, the product $fg\in H^{k}$, and 
\begin{equation}
\|fg\|_{k}\leq C\|f\|_{m}\|g\|_{n},
\label{eq:33}
\end{equation}
with $C$ independent of $f$ and $g$.
\end{theo}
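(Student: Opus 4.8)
\emph{Proof proposal.} I would prove the estimate on the Fourier side, using the elementary bound $\langle\xi\rangle^{k}\le C_{k}\bigl(\langle\xi-\eta\rangle^{k}+\langle\eta\rangle^{k}\bigr)$ (which follows from $|\xi|\le|\xi-\eta|+|\eta|$) to spread the $k$ derivatives over the two factors. First, since $f\in H^{m}\subset L^{2}$ and $g\in H^{n}\subset L^{2}$ one has $fg\in L^{1}$, so $\widehat{fg}$ is a bounded continuous function equal to a constant multiple of $\hat f\ast\hat g$; moreover $\|h\|_{k}\simeq\|\langle\xi\rangle^{k}\hat h\|_{L^{2}}$, so it suffices to show $\bigl\|\langle\xi\rangle^{k}(\hat f\ast\hat g)\bigr\|_{L^{2}}\le C\|f\|_{m}\|g\|_{n}$ (this simultaneously gives $fg\in L^{2}$, hence $fg\in H^{k}$, and the estimate). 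Splitting $\langle\xi\rangle^{k}|(\hat f\ast\hat g)(\xi)|\lesssim J_{1}(\xi)+J_{2}(\xi)$ with
\[
J_{1}(\xi)=\int_{\mathbb{R}^{d}}\langle\xi-\eta\rangle^{k}|\hat f(\xi-\eta)|\,|\hat g(\eta)|\,d\eta,\qquad J_{2}(\xi)=\int_{\mathbb{R}^{d}}|\hat f(\xi-\eta)|\,\langle\eta\rangle^{k}|\hat g(\eta)|\,d\eta,
\]
the change of variables $\eta\mapsto\xi-\eta$ identifies $J_{2}$ with $J_{1}$ after interchanging $f\leftrightarrow g$; since the hypotheses on $(m,n,k)$ are symmetric in $m$ and $n$, it is enough to estimate $\|J_{1}\|_{L^{2}}$.

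For that, I would put $A:=\langle\cdot\rangle^{m}|\hat f|$ and $B:=\langle\cdot\rangle^{n}|\hat g|$, so that $A,B\in L^{2}$ with $\|A\|_{L^{2}}=\|f\|_{m}$, $\|B\|_{L^{2}}=\|g\|_{n}$, and rewrite, using $m\ge k\ge 0$ and $n\ge 0$,
\[
J_{1}(\xi)=\int_{\mathbb{R}^{d}}\langle\xi-\eta\rangle^{-(m-k)}\langle\eta\rangle^{-n}\,A(\xi-\eta)\,B(\eta)\,d\eta,
\]
with both weight exponents nonnegative. Applying Cauchy--Schwarz in $\eta$ with the split (weight)$\times(A\cdot B)$ gives
\[
|J_{1}(\xi)|^{2}\le\Bigl(\int_{\mathbb{R}^{d}}\langle\xi-\eta\rangle^{-2(m-k)}\langle\eta\rangle^{-2n}\,d\eta\Bigr)\,\bigl(|A|^{2}\ast|B|^{2}\bigr)(\xi),
\]
and the first factor is finite and bounded uniformly in $\xi$ precisely because $2(m-k)+2n>d$, i.e. because $m+n-k>\tfrac{d}{2}$. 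Integrating in $\xi$ and using $\|\,|A|^{2}\ast|B|^{2}\,\|_{L^{1}}=\|A\|_{L^{2}}^{2}\|B\|_{L^{2}}^{2}$ (Young's inequality) yields $\|J_{1}\|_{L^{2}}\le C\|f\|_{m}\|g\|_{n}$, and together with the symmetric estimate for $J_{2}$ this completes the argument.

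The only nonroutine ingredient, and the place where the dimensional condition is genuinely used, is the uniform bound $\sup_{\xi\in\mathbb{R}^{d}}\int_{\mathbb{R}^{d}}\langle\xi-\eta\rangle^{-a}\langle\eta\rangle^{-b}\,d\eta<\infty$ for $a,b\ge 0$ with $a+b>d$; I would obtain it by splitting the $\eta$-integral into the regions $\{|\eta|\le|\xi|/2\}$, $\{|\xi-\eta|\le|\xi|/2\}$ and the complement, on each of which one factor is comparable to $\langle\xi\rangle^{-a}$ or $\langle\xi\rangle^{-b}$ and the remaining integral is estimated directly. A more hands-on alternative would be to expand $\partial_{x}^{\alpha}(fg)$ by the Leibniz rule for $|\alpha|\le k$ and bound each $\|\partial^{\beta}f\,\partial^{\alpha-\beta}g\|_{L^{2}}$ via H\"older and the Gagliardo--Nirenberg inequalities, but that forces a more delicate tracking of interpolation exponents, so I would favour the Fourier argument above.
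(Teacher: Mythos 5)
Your argument is correct: the reduction to the convolution estimate via the Peetre-type bound $\langle\xi\rangle^{k}\lesssim\langle\xi-\eta\rangle^{k}+\langle\eta\rangle^{k}$, the weighted Cauchy--Schwarz step, and the uniform bound on $\int\langle\xi-\eta\rangle^{-2(m-k)}\langle\eta\rangle^{-2n}\,d\eta$ under $2(m+n-k)>d$ all fit together, and the hypotheses $m\geq k$, $n\geq k$ are used exactly where needed to keep the weight exponents nonnegative. Note that the paper offers no proof of this statement at all — it is quoted from the cited reference (Cherrier--Milani) — so there is nothing internal to compare against; your Fourier-side proof is the standard one for this product estimate and would serve as a complete substitute for the citation.
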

\begin{coro}{\cite{cherrier}}
Let $s,r\in\mathbb{N}_{0}$ such that $s>\frac{d}{2}$ and $0\leq r\leq s$. Then $H^{s}\times H^{r}\hookrightarrow H^{r}$, and for all $f\in H^{s}$ and $g\in H^{r}$
\begin{equation}
\|fg\|_{r}\leq C\|f\|_{s}\|g\|_{r}\label{eq:34}
\end{equation}
where $C$ is a constant independent of $f$ and $g$. In particular, $H^{s}$ is an algebra under point wise multiplication and for all $f,g\in H^{s}$ 
\begin{equation}
\|fg\|_{s}\leq C\|f\|_{s}\|g\|_{s}\label{eq:35}
\end{equation}
in accord with \eqref{eq:33} for $r=s$.
\end{coro}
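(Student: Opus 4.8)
The plan is to deduce this statement directly from the multiplicative estimate of the preceding theorem (inequality \eqref{eq:33}), applied with a judicious choice of the indices $m,n,k$. Specifically, given $s>\frac{d}{2}$ and $0\leq r\leq s$ with $s,r\in\mathbb{N}_{0}$, I would set $m=s$, $n=r$ and $k=r$, and then check that the three hypotheses $m\geq k$, $n\geq k$ and $m+n-k>\frac{d}{2}$ are met by this choice.

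The verification is immediate: $m\geq k$ reads $s\geq r$, which holds by assumption; $n\geq k$ reads $r\geq r$, trivially true; and $m+n-k=s+r-r=s>\frac{d}{2}$, which is again part of the hypotheses. Note in particular that this last inequality is insensitive to the value of $r$. Hence the theorem applies and yields, for every $f\in H^{s}=H^{m}$ and $g\in H^{r}=H^{n}$, that $fg\in H^{r}=H^{k}$ together with the bound $\|fg\|_{r}\leq C\|f\|_{s}\|g\|_{r}$, where $C=C(d,s,r)$ is the constant furnished by the theorem and is therefore independent of $f$ and $g$; this is \eqref{eq:34}. Since this says precisely that the pointwise product defines a bounded bilinear map $H^{s}\times H^{r}\to H^{r}$, the continuous embedding $H^{s}\times H^{r}\hookrightarrow H^{r}$ follows at once.

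For the last assertion I would specialize the above to $r=s$, which is admissible since $s\leq s$. Re-running the same computation with $m=n=k=s$ gives $m\geq k$, $n\geq k$ and $m+n-k=s>\frac{d}{2}$, so the theorem again applies and produces $\|fg\|_{s}\leq C\|f\|_{s}\|g\|_{s}$ for all $f,g\in H^{s}$; this is \eqref{eq:35}, which is visibly the case $r=s$ of \eqref{eq:33}, and it exhibits $H^{s}$ as an algebra under pointwise multiplication.

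I do not anticipate a genuine obstacle here: the argument is a pure bookkeeping of indices and carries no analytic content beyond the theorem already invoked. The only point that warrants a moment's attention is confirming that the endpoint choice $r=s$ (equivalently $k=m=n$) still satisfies the strict inequality $m+n-k>\frac{d}{2}$, which it does because it collapses exactly to $s>\frac{d}{2}$.
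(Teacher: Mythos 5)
Your argument is correct and is exactly the intended derivation: the corollary is stated as a consequence of the preceding product estimate \eqref{eq:33}, and your choice $m=s$, $n=k=r$ (with the endpoint case $r=s$ for the algebra property) verifies its hypotheses precisely as the paper's citation of \cite{cherrier} presupposes. No gaps.
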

We define the commutator of two functions $\xi$ and $w$ as
\begin{equation}
G_{\alpha}(\xi,w):=\partial_{x}^{\alpha}(\xi w)-\xi\partial_{x}^{\alpha}w,\label{eq:36}
\end{equation}
and $G_{0}(\xi,w)=0$ for $\alpha=0$. It satisfies the following estimates (for the proof, see \cite{cherrier})
\begin{theo}{\cite{cherrier}}
Let $m,s\in\mathbb{N}$ such that $s>\frac{d}{2}+1$ and $1\leq m\leq s$. Let $\xi\in\widehat{H}^{s}$ and $w\in H^{m}$. Then, for all $\alpha\in\mathbb{N}_{0}^{d}$ with $|\alpha|\leq m$, the commutator $G_{\alpha}(\xi,w)$ belongs to $H^{m-|\alpha|}$ and 
\begin{equation}
\|G_{\alpha}(\xi,w)\|_{m-|\alpha|}\leq C\|\nabla\xi\|_{s-1}\|w\|_{m-1}.\label{eq:37}
\end{equation}
\end{theo}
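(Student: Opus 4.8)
The plan is to reduce the whole estimate to the Leibniz rule together with the product estimate \eqref{eq:33}, applied one summand at a time. The case $|\alpha|=0$ is trivial since $G_{0}(\xi,w)\equiv 0$, so assume $|\alpha|\geq 1$. First I would expand
\[
G_{\alpha}(\xi,w)=\partial_{x}^{\alpha}(\xi w)-\xi\,\partial_{x}^{\alpha}w=\sum_{0<\beta\leq\alpha}\binom{\alpha}{\beta}\,\partial_{x}^{\beta}\xi\;\partial_{x}^{\alpha-\beta}w,
\]
the essential point being that subtracting $\xi\,\partial_{x}^{\alpha}w$ removes exactly the $\beta=0$ term, so that every surviving summand carries at least one derivative of $\xi$.

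Next, for each multi-index $\beta$ with $1\leq|\beta|\leq|\alpha|$ I would read $\partial_{x}^{\beta}\xi$ as a partial derivative of order $|\beta|-1$ of one of the components of $\nabla\xi$. Since $\nabla\xi\in H^{s-1}$ and $|\beta|-1\leq|\alpha|-1\leq m-1\leq s-1$, this puts $\partial_{x}^{\beta}\xi$ in $H^{s-|\beta|}$ with $\|\partial_{x}^{\beta}\xi\|_{s-|\beta|}\leq\|\nabla\xi\|_{s-1}$, and $s-|\beta|\geq 0$ because $|\beta|\leq|\alpha|\leq m\leq s$. For the other factor I would regard $w$ merely as an element of $H^{m-1}$ --- legitimate because $H^{m}\hookrightarrow H^{m-1}$ and $m\geq 1$ --- so that $\partial_{x}^{\alpha-\beta}w\in H^{m-1-|\alpha|+|\beta|}$ with $\|\partial_{x}^{\alpha-\beta}w\|_{m-1-|\alpha|+|\beta|}\leq\|w\|_{m-1}$; here the exponent $m-1-|\alpha|+|\beta|$ is $\geq 0$ precisely because $|\beta|\geq 1$ and $|\alpha|\leq m$. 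It is this step --- spending one derivative of $w$ from the outset --- that lets the terms with $|\beta|$ large, in particular $\beta=\alpha$ where all derivatives sit on $\xi$, still close with the factor $\|w\|_{m-1}$ rather than $\|w\|_{m}$ on the right.

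Then I would invoke \eqref{eq:33} for the product $\partial_{x}^{\beta}\xi\cdot\partial_{x}^{\alpha-\beta}w$ with the exponents $p=s-|\beta|$, $q=m-1-|\alpha|+|\beta|$ and $k=m-|\alpha|$, all nonnegative integers by the above. Its three hypotheses hold uniformly in $\beta$: $p\geq k$ since $s\geq m$ and $|\beta|\leq|\alpha|$; $q\geq k$ since $|\beta|\geq 1$; and $p+q-k=s-1>\frac{d}{2}$ thanks to the hypothesis $s>\frac{d}{2}+1$. Hence each summand lies in $H^{m-|\alpha|}$ with norm bounded by $C\|\nabla\xi\|_{s-1}\|w\|_{m-1}$, and summing the finitely many terms, absorbing the binomial coefficients into a constant depending only on $m$ and $d$, yields \eqref{eq:37}.

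I do not expect a genuine analytic obstacle: the argument is the Leibniz rule plus \eqref{eq:33}. The only delicate part is the index bookkeeping, and in particular checking that $p+q-k>\frac{d}{2}$ is not violated in the two extreme cases $|\beta|=1$ (only one derivative on $\xi$) and $|\beta|=|\alpha|$ (all derivatives on $\xi$); tracking those cases is what pins down the need for $s>\frac{d}{2}+1$ rather than the weaker $s>\frac{d}{2}$, since one full derivative of $\xi$ must be expended to descend from the $\widehat{H}^{s}$ scale to the $H^{s-1}$ scale.
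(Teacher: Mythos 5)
Your argument is correct: the paper itself does not prove this statement but simply cites \cite{cherrier}, and your Leibniz-plus-product-estimate argument (writing $\partial_{x}^{\beta}\xi$ as an order $|\beta|-1$ derivative of $\nabla\xi$ and applying \eqref{eq:33} with $p=s-|\beta|$, $q=m-1-|\alpha|+|\beta|$, $k=m-|\alpha|$, so that $p+q-k=s-1>\tfrac{d}{2}$) is exactly the standard proof given in that reference, with the index bookkeeping done correctly. The only point worth a remark is that the Leibniz expansion for $\xi\in\widehat{H}^{s}$ (not smooth) should strictly be justified by mollifying $\xi$ and passing to the limit, which is routine once the summand-by-summand bounds you derived are in hand.
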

The following result provides chain rule estimates, (see, \cite{cherrier} and \cite{hudja}, for example)
\begin{theo}
Let $s\geq 1$ be an integer and assume that  $v=(v_{1},..,v_{N})\in\widehat{H}^{s}$ Let $F=F(v)$ be a $C^{\infty}$-function of $v\in\mathbb{R}^{N}$. Then for $1\leq j\leq s$, we have $D_{x}F(v)\in H^{j-1}$ and 
\begin{equation}
\|D_{x}F(v)\|_{j-1}\leq CM(1+\|v\|_{L^{\infty}})^{j-1}\|D_{x}v\|_{j-1},
\label{eq:19}
\end{equation}
where $C$ is a positive constant and $M=\sum_{k=1}^{j}\sup\left\lbrace|D_{v}^{k}F(v)| :v\leq\lambda:=\|v\|_{L^{\infty}}\right\rbrace$.
\end{theo}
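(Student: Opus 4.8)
The plan is to expand $\partial_x^\gamma F(v)$ with the multivariate Fa\`a di Bruno chain rule and then control the resulting products of derivatives of $v$ by H\"older's inequality together with the Gagliardo--Nirenberg (Moser-type) interpolation inequalities. Write $\lambda:=\|v\|_{L^\infty}$. Since $F\in C^\infty$, each $D_v^kF$ is continuous and therefore bounded on the compact ball $\overline{B}(0,\lambda)\subset\mathbb{R}^N$; as $v(x)\in\overline{B}(0,\lambda)$ for a.e.\ $x$, we get $|(D_v^kF)(v(x))|\le\sup\{|D_v^kF(w)|:|w|\le\lambda\}$ for a.e.\ $x$, and in particular $M<\infty$.

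First I would reduce to a bound for each order separately: since $\|D_xF(v)\|_{j-1}^2=\sum_{1\le|\gamma|\le j}\|\partial_x^\gamma F(v)\|^2$, it suffices to estimate $\|\partial_x^\gamma F(v)\|$ for a fixed multi-index $\gamma$ with $1\le|\gamma|\le j$. By Fa\`a di Bruno, $\partial_x^\gamma F(v)$ is a finite linear combination, with combinatorial coefficients depending only on $\gamma$, of terms of the form
\[
(D_v^kF)(v)\,\partial_x^{\gamma_1}v_{i_1}\cdots\partial_x^{\gamma_k}v_{i_k},\qquad 1\le k\le|\gamma|,\quad \gamma_1+\cdots+\gamma_k=\gamma,\quad |\gamma_l|\ge 1 .
\]
That $F(v)$ genuinely possesses weak derivatives of order up to $j$ given by this formula I would justify by a mollification argument on $v$, using $v\in L^\infty$, $\nabla v\in H^{s-1}$ and $j\le s$ (so all derivatives of $v$ of order $1,\dots,j$ belong to $L^2$); concretely it is cleanest to prove the identity and the estimates below first for $v\in C_c^\infty$ and then pass to the limit.

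For a fixed term I would use the a.e.\ bound $|(D_v^kF)(v)|\le M$, then H\"older's inequality with the exponents $p_l:=2|\gamma|/|\gamma_l|$ (so that $\sum_l p_l^{-1}=\tfrac12$, i.e.\ the product lands in $L^2$), and finally the Gagliardo--Nirenberg estimate $\|\partial_x^{\gamma_l}v\|_{L^{p_l}}\le C\|v\|_{L^\infty}^{1-|\gamma_l|/|\gamma|}\|D_x^{|\gamma|}v\|^{|\gamma_l|/|\gamma|}$, to obtain
\[
\bigl\|(D_v^kF)(v)\,\partial_x^{\gamma_1}v_{i_1}\cdots\partial_x^{\gamma_k}v_{i_k}\bigr\|\le M\prod_{l=1}^{k}\bigl\|\partial_x^{\gamma_l}v_{i_l}\bigr\|_{L^{p_l}}\le C M\,\|v\|_{L^\infty}^{\,k-1}\,\bigl\|D_x^{|\gamma|}v\bigr\| ,
\]
where the last step uses that the exponents $|\gamma_l|/|\gamma|$ sum to $1$. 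Since $k-1\le|\gamma|-1\le j-1$ one has $\|v\|_{L^\infty}^{k-1}\le(1+\|v\|_{L^\infty})^{j-1}$, and since $|\gamma|\le j$ one has $\|D_x^{|\gamma|}v\|\le\|D_xv\|_{j-1}$. Summing over the finitely many terms and over all $\gamma$ with $1\le|\gamma|\le j$, and absorbing the combinatorial coefficients, the interpolation constants and the (order- and dimension-dependent) number of terms into $C$, yields \eqref{eq:19}; in particular $D_xF(v)\in H^{j-1}$.

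The main obstacle is bookkeeping rather than a conceptual difficulty: one must check that the H\"older exponents $p_l$ and the Gagliardo--Nirenberg exponents are compatible for \emph{every} admissible partition $\gamma_1+\cdots+\gamma_k=\gamma$, treating the borderline cases $|\gamma|=1$ and $k=|\gamma|$ separately, and one must make the Fa\`a di Bruno expansion rigorous in the weak sense under the stated low regularity. The estimate can alternatively be obtained by induction on $j$, writing $\partial_{x_i}F(v)=\sum_l(\partial_{v_l}F)(v)\,\partial_{x_i}v_l$, noting that $\sum_{k=1}^{j-1}\sup\{|D_v^k(\partial_{v_l}F)|\}\le M$, applying the inductive hypothesis to the $C^\infty$ function $\partial_{v_l}F$, and closing with a Moser-type product estimate in the spirit of \eqref{eq:33} --- though this route still relies on the same interpolation inequalities.
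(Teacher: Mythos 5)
Your argument is correct: the Fa\`a di Bruno expansion, combined with H\"older's inequality with exponents $p_{l}=2|\gamma|/|\gamma_{l}|$ and the Gagliardo--Nirenberg interpolation inequalities, is precisely the classical Moser-type proof of this estimate, and your bookkeeping (the powers $k-1\leq j-1$ of $\|v\|_{L^{\infty}}$ and the reduction $\|D_{x}^{|\gamma|}v\|\leq\|D_{x}v\|_{j-1}$) closes the bound as claimed. The paper states this theorem without proof, citing \cite{cherrier} and \cite{hudja}, where the argument is essentially the one you outline, so apart from the routine mollification step you already flag (plain convolution of $v$ suffices; no compact-support cutoff is needed since only $D_{x}F(v)$, not $F(v)$, must lie in $L^{2}$) there is nothing to add.
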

Let $(0,T)\subset\mathbb{R}$ be an interval, and $X$ a Banach space with norm $\|\cdot\|_{X}$. The space $L^{p}(0,T;X)$ consists of all strongly measurable function $u:[0,T]\rightarrow X$ such that
\[\|u\|_{L^{p}(0,T;X)}:=\left(\int_{0}^{T}\|u(t)\|_{X}^{p}dt\right)^{1/p}<\infty\]
for $1\leq p<\infty$ and $\|u\|_{L^{\infty}(0,T;X)}:=ess\sup_{0\leq t\leq T}\|u(t)\|_{X}<\infty$. Furthermore, the space $\mathcal{C}([0,T];X)$ comprises all continuous functions $u:[0,T]\rightarrow X$ with $\|u\|_{\mathcal{C}([0,T];X)}:=\max_{0\leq t\leq T}\|u(t)\|_{X}<\infty$.\\
Let $u\in L^{1}(0,T;X)$. We say that $v\in L^{1}(0,T;X)$ is the weak derivative of $u$, written $u_{t}=v$, if it satisfies that 
\[\int_{0}^{T}\phi^{\prime}(t)u(t)dt=-\int_{0}^{T}\phi(t)v(t)dt\]
for all scalar test functions $\phi\in\mathcal{C}_{0}^{\infty}(0,T)$ and we set 
\begin{align*}
\mathcal{C}^{1}([0,T];X):=\left\lbrace u\in\mathcal{C}([0,T];X):u_{t}\in\mathcal{C}([0,T];X)\right\rbrace.
\end{align*}
For the main properties of Banach spaces involving time we refer the reader  to \cite{evans}, \cite{cherrier}, \cite{tuomas}, \cite{yosida}. 
\section{Coupling vs. Uncoupling}
\label{couvsuncou}
In this section we compare the energy estimates obtained from an uncoupled linear system of equations like \eqref{eq:13} with those obtained in the presence of coupling. Through a simple example we readily show that the energy estimates of a linear equation with coupling between hyperbolic and parabolic variables are not suited for the application of Banach's contraction principle.\\
Let $1\leq m\leq s$. Assume that $w$ and $z$ are given scalar functions such that 
\begin{eqnarray}
w&\in&\mathcal{C}([0,T];H^{m})\cap\mathcal{C}^{1}([0,T]; H^{m-1}),\nonumber\\
z&\in&\mathcal{C}([0,T];H^{m})\cap L^{2}(0,T;H^{m+1}),\nonumber
\end{eqnarray}
and consider the one dimensional system of equations 
\begin{eqnarray}
u_{t}+u_{x}&=&f_{1}(w,z,z_{x}),\label{eq:61}\\
v_{t}-v_{xx}&=&f_{2}(w,z,w_{x},z_{x}),\label{eq:62}
\end{eqnarray}
with initial conditions $u_{0}$ and $v_{0}$ respectively. Assume that $f_{1}$, $f_{2}$, $u_{0}$ and $v_{0}$ are smooth enough. In this system the \emph{hyperbolic} variable $u$ is decoupled from the \emph{parabolic} variable $v$. For all $|\alpha|\leq m$, apply the operator $\partial_{x}^{\alpha}\cdot$ to \eqref{eq:61}, multiply by $2\partial_{x}^{\alpha}u$ and add up all the resulting equations, with respect to $\alpha$, to obtain, 
\begin{equation}
\frac{d}{dt}\|u\|_{m}^{2}\leq C\left(\|f_{1}\|_{m}\|u\|_{m}+\|u\|_{m}^{2}\right).
\label{eq:63}
\end{equation}
Using the chain rule and redefining the constant $C$ we find 
\begin{equation}
\|u\|_{m}\frac{d}{dt}\|u\|_{m}\leq C\left(\|f_{1}\|_{m}\|u\|_{m}+\|u\|_{m}^{2}\right),
\label{eq:64}
\end{equation}
which implies that 
\begin{equation}
\frac{d}{dt}\|u\|_{m}\leq C\left(\|f_{1}\|_{m}+\|u\|_{m}\right).
\label{eq:65}
\end{equation}
Then, Gronwall's lemma and H\"older's inequality leads us to
\begin{equation}
\|u\|^{2}_{m}\leq C e^{Ct}\left\lbrace\|u_{0}\|^{2}_{m}+t\left(\int_{0}^{t}\|f_{1}\|^{2}_{m}\right)\right\rbrace.
\label{eq:66}
\end{equation}
Notice the appearance of the factor $t$ in front of $\int_{0}^{t}\|f_{1}\|^{2}_{m}$. Similarly, we have
\begin{align}
\frac{d}{dt}\|v\|_{m}^{2}+\|v_{x}\|_{m}^{2}\leq\|f_{2}\|_{m-1}\|v_{x}\|_{m}\leq\frac{1}{2}\left(\|f_{2}\|_{m-1}^{2}+\|v_{x}\|_{m}^{2}\right),\label{eq:difest}
\end{align}
which leads to the estimate (see \eqref{eq:441}),
\begin{equation}
\|v\|_{m}^{2}+\int_{0}^{t}\|v\|_{m+1}^{2}\leq C e^{Ct}\left\lbrace\|v_{0}\|_{m}^{2}+\int_{0}^{t}\|f_{2}\|_{m-1}^{2}\right\rbrace.
\label{eq:67}
\end{equation}
Assume the existence of positive constants $c_{1}$ and $c_{2}$, such that the following estimates hold,
\begin{eqnarray}
\|f_{1}\|_{s-1}^{2}&\leq&c_{1}\left(\|w\|_{s-1}^{2}+\|z\|_{s-1}^{2}+\|z\|_{s}^{2}\right),\label{eq:69}\\
\|f_{2}\|_{s-2}^{2}&\leq&c_{2}\left(\|w\|_{s-1}^{2}+\|z\|_{s-1}^{2}\right).\label{eq:610}
\end{eqnarray}
Apply \eqref{eq:69}-\eqref{eq:610} into \eqref{eq:66} and \eqref{eq:67} (with $m=s-1$) respectively and combine the estimates to get
\begin{align}
&\sup_{0\leq\tau\leq t}\|(u,v)(\tau)\|_{s-1}^{2}+\int_{0}^{t}\|v(\tau)\|_{s}^{2}d\tau\leq\nonumber\\
&\leq c_{0}e^{Ct}\left\lbrace t\int_{0}^{t}\|(w,z)(\tau)\|_{s-1}^{2}d\tau+t\int_{0}^{t}\|z(\tau)\|_{s}^{2}d\tau+\int_{0}^{t}\|(w,z)(\tau)\|_{s-1}^{2}d\tau\right\rbrace\nonumber\\
&\leq c_{0}e^{Ct}\left\lbrace t^{2}\sup_{0\leq\tau\leq t}\|(w,z)(\tau)\|_{s-1}^{2}+t\int_{0}^{t}\|z(\tau)\|_{s}^{2}d\tau+t\sup_{0\leq\tau\leq t}\|(w,z)(\tau)\|_{s-1}^{2}\right\rbrace\nonumber\\
&\leq c_{0}t(1+t)e^{Ct}\left\lbrace\sup_{0\leq\tau\leq t}\|(w,z)(\tau)\|_{s-1}^{2}+\int_{0}^{t}\|z(\tau)\|_{s}^{2}d\tau\right\rbrace,
\label{eq:611}
\end{align}
where the constant $c_{0}$ is independent of $t$. Then, by taking $t=T_{\ast}\in(0,T)$ sufficiently small we can assure that
\begin{equation}
0<c_{1}T_{\ast}(1+T_{\ast})e^{CT_{\ast}}<1.
\label{eq:612}
\end{equation}
Define $Y$ as the Banach space of vectors $(w,z)$ such that $w\in\mathcal{C}([0,T_{\ast}];H^{s-1})$, $z\in\mathcal{C}([0,T_{\ast}];H^{s-1})\cap L^{2}(0,T;H^{s})$. For the sake of the argument, assume the existence of a closed set $X\subset Y$ such that the mapping 
\begin{align}
\mathcal{T}&:X\rightarrow X\nonumber\\
&(w,z)\mapsto(u,v)\nonumber
\end{align}
is well defined. Then, \eqref{eq:611} and \eqref{eq:612} imply that $\mathcal{T}$ is a contraction. Therefore, by Banach's contraction principle, there is $(u,v)\in X$ such that $\mathcal{T}(u,v)=(u,v)$.\\
Now, instead of \eqref{eq:61}, assume that we are provided with the equation
\begin{equation}
u_{t}+u_{x}+v_{x}=f_{1}(w,z,z_{x}),
\label{eq:613}
\end{equation}
and consider the coupled system of equations \eqref{eq:613}-\eqref{eq:62}. Instead of \eqref{eq:63} we obtain the estimate
\begin{equation}
\frac{d}{dt}\|u\|_{m}^{2}\leq C\left(\|f_{1}\|_{m}\|u\|_{m}+\|u\|_{m}^{2}+\|u\|_{m}\|v_{x}\|_{m}\right).
\label{eq:614}
\end{equation}
If we follow the same steps that led us from \eqref{eq:63} to \eqref{eq:65}, we obtain the inequality
\begin{equation}
\frac{d}{dt}\|u\|_{m}\leq C\left(\|f\|_{m}+\|u\|_{m}+\|v_{x}\|_{m}\right),
\label{eq:615}
\end{equation}
which is undesirable in the sense that, even when integrated and combined with \eqref{eq:67}, the resultant inequality will be unfit to the application of Gronwall's lemma. The reason, is the appearance of the term $\|v_{x}\|_{m}$ in the right hand side of \eqref{eq:615}. Observe that, due to the coercivity of the second order term in \eqref{eq:62}, the term involving $\|v_{x}\|_{m}$ appears in the left hand side of \eqref{eq:67}. Thus, when estimating \eqref{eq:613}, instead of using the chain rule in \eqref{eq:614} (to eliminate one factor $\|u\|_{m}$), we have to apply Cauchy's weighted inequality, to obtain
\begin{align*}
\frac{d}{dt}\|u\|_{m}^{2}\leq C\left(\|f_{1}\|_{m}^{2}+\|u\|_{m}^{2}\right)+\frac{C\|u\|_{m}^{2}}{2\epsilon}+\frac{C\epsilon\|v_{x}\|_{m}^{2}}{2},
\end{align*}
where $\epsilon>0$ is taken so that $1-C\epsilon>0$. Once we integrate and combine this estimate with \eqref{eq:difest}, the resulting inequality is ready for the application of Gronwall's lemma. Although, this is standard procedure, the price to pay from this estrategy is the lack of the factor $t$ in front of $\int_{0}^{t}\|f_{1}\|_{m}^{2}$. In this case, the analog to \eqref{eq:611} turns out to be,
\begin{align}
&\sup_{0\leq\tau\leq t}\|(u,v)(\tau)\|_{s-1}^{2}+\int_{0}^{t}\|v(\tau)\|_{s}^{2}d\tau\leq\nonumber\\
&\leq c_{0}e^{Ct}\left\lbrace \int_{0}^{t}\|(w,z)(\tau)\|_{s-1}^{2}d\tau+\int_{0}^{t}\|z(\tau)\|_{s}^{2}d\tau+\int_{0}^{t}\|(w,z)(\tau)\|_{s-1}^{2}d\tau\right\rbrace\nonumber\\
&\leq c_{1}e^{Ct}\left\lbrace t \sup_{0\leq\tau\leq t}\|(w,z)(\tau)\|_{s-1}^{2}+\int_{0}^{t}\|z(\tau)\|_{s}^{2}d\tau\right\rbrace.
\label{eq:616}
\end{align}
Therefore, no matter how small $t\in[0,T]$ is taken, the mapping $\mathcal{T}$ associated with equations \eqref{eq:613} and \eqref{eq:62}, will not be a contraction. Moreover, as this example shows, this feature is a consequence of the presence of coupling in the linear equations \eqref{eq:613}-\eqref{eq:62}. For this reason, in the following section we develop a fixed point argument that doesn't require the contraction property for $\mathcal{T}$ or even $\mathcal{T}^{2}$.
\section{The fixed point of a Fibonacci contraction}
\begin{lema}
\label{optimal}
Let $\left\lbrace a_{l}\right\rbrace_{l\in\mathbb{N}_{0}}$ be a sequence of non-negative real numbers with the following property: there is $0<\alpha_{0}<\tfrac{1}{2}$ such that 
\begin{equation}
a_{l}\leq\alpha_{0}\left(a_{l-1}+a_{l-2}\right)~\forall~l\geq 2.
\label{eq:21}
\end{equation}
Then, $\sum_{l}a_{l}<\infty$.
\end{lema}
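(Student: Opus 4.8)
The plan is to bound the partial sums $S_{n}:=\sum_{l=0}^{n}a_{l}$ uniformly in $n$. Since every $a_{l}\geq 0$, the sequence $(S_{n})_{n}$ is non-decreasing, so a uniform bound immediately yields convergence of $\sum_{l}a_{l}$ to a finite limit. The estimate \eqref{eq:21} is of ``Fibonacci type'', and the key idea is that summing it telescopes the two shifted sums into copies of $S_{n}$, after which the hypothesis $\alpha_{0}<\tfrac{1}{2}$ is exactly what is needed to solve for $S_{n}$.

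Concretely, for $n\geq 2$ I would sum \eqref{eq:21} over $l=2,\dots,n$ and reindex:
\[
\sum_{l=2}^{n}a_{l}\;\leq\;\alpha_{0}\sum_{l=2}^{n}\bigl(a_{l-1}+a_{l-2}\bigr)\;=\;\alpha_{0}\Bigl(\sum_{k=1}^{n-1}a_{k}+\sum_{k=0}^{n-2}a_{k}\Bigr)\;\leq\;2\alpha_{0}\,S_{n},
\]
where the last inequality uses that both reindexed sums are sub-sums of $S_{n}$ together with $a_{l}\geq 0$. Since the left-hand side equals $S_{n}-a_{0}-a_{1}$, this rearranges to
\[
(1-2\alpha_{0})\,S_{n}\;\leq\;a_{0}+a_{1}\qquad\text{for all }n\geq 2 .
\]
Because $0<\alpha_{0}<\tfrac{1}{2}$ we have $1-2\alpha_{0}>0$, hence $S_{n}\leq (a_{0}+a_{1})/(1-2\alpha_{0})$ for every $n$ (the cases $n=0,1$ being trivial). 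Thus $(S_{n})$ is bounded and monotone, so $\sum_{l}a_{l}=\lim_{n}S_{n}<\infty$.

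There is no substantial obstacle here; the only points requiring care are the bookkeeping in the reindexing and invoking $a_{l}\geq 0$ at the right places, and noting that the threshold $\tfrac12$ is sharp (for $\alpha_{0}=\tfrac12$ the constant sequence $a_{l}\equiv 1$ satisfies \eqref{eq:21} with divergent sum). If a decay rate were wanted instead of mere summability, one could alternatively prove $a_{l}\leq M r^{l}$ by induction, where $r=\tfrac{1}{2}\bigl(\alpha_{0}+\sqrt{\alpha_{0}^{2}+4\alpha_{0}}\bigr)$ is the positive root of $r^{2}=\alpha_{0}(r+1)$; one checks $r<1$ from $1-\alpha_{0}-\alpha_{0}=1-2\alpha_{0}>0$, and the inductive step closes precisely because $\alpha_{0}(r+1)=r^{2}$. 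The partial-sum argument above is shorter and suffices for the statement.
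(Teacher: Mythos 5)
Your proposal is correct and follows essentially the same route as the paper: sum the recursion over $l$, observe that the shifted sums are (sub-)sums of the partial sum, and use $1-2\alpha_{0}>0$ to solve for a uniform bound on the partial sums. The only differences are cosmetic bookkeeping (you absorb both shifted sums into $S_{n}$ and obtain the bound $(a_{0}+a_{1})/(1-2\alpha_{0})$, while the paper keeps them and bounds the tail by $\alpha_{0}(2a_{1}+a_{0})/(1-2\alpha_{0})$), so no further comment is needed.
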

\begin{proof}
Let $m\geq 2$ be any natural number. Adding all the cases in \eqref{eq:21} from $l=2$ until $l=m$ and rearranging terms we get
\begin{equation*}
\sum_{l=2}^{m}a_{l}-\alpha_{0}\sum_{l=2}^{m-1}a_{l}-\alpha_{0}\sum_{l=2}^{m-2}a_{l}\leq\alpha_{0}(2a_{1}+a_{0})\quad\forall~m\geq 2.
\end{equation*}
Since $a_{l}\geq 0$ for all $l\in\mathbb{N}_{0}$ and $0<\alpha_{0}<\tfrac{1}{2}$, the last inequality implies that 
\begin{equation*}
\sum_{l=2}^{m}a_{l}\leq\frac{\alpha_{0}}{1-2\alpha_{0}}(2a_{1}+a_{0})\quad\forall~m\geq 2
\end{equation*}
and the claim follows.
\end{proof}
\begin{theo}[Fixed Point]
\label{fixedpo}
Let $Y$ be a Banach space with norm $\|\cdot\|_{y}$, $X\subset Y$ a non-empty set and $\mathcal{T}:X\rightarrow X$, a given operator. Define $X_{\infty}\subset Y$ as all the vectors $U\in Y$ for which there is a sequence $\left\lbrace U^{k}\right\rbrace\subset X$ and $V\in Y$ such that 
	\begin{equation}
	U^{k}\rightarrow U~\mbox{and}~\mathcal{T}(U^{k})\rightarrow V~\mbox{in}~ Y.
	\label{eq:24}
	\end{equation}
	Assume the following properties for $\mathcal{T}$ are true:
\begin{itemize}
	\item [(i)] $\mathcal{T}$ admits a well defined extension $\widehat{\mathcal{T}}:X_{\infty}\rightarrow Y$ as $\widehat{\mathcal{T}}(U)=V$ for every $U\in X_{\infty}$ and $V$ as in \eqref{eq:24}. That is, we assume that $V$ is independent of the Cauchy sequence in $X$ that converges to $U$.
	\item [(ii)] There is a constant, $0<\alpha_{0}<\frac{1}{2}$, such that, for all $U_{1},U_{2}\in X$ 
	\begin{equation}
	\|\mathcal{T}^{2}(U_{2})-\mathcal{T}^{2}(U_{1})\|_{y}\leq\alpha_{0}\left\lbrace\|\mathcal{T}(U_{2})-\mathcal{T}(U_{1})\|_{y}+\|U_{2}-U_{1}\|_{y}\right\rbrace.
	\label{eq:25}
	\end{equation}
\end{itemize}
Then, $\widehat{\mathcal{T}}(X_{\infty})\subseteq X_{\infty}$, $\widehat{\mathcal{T}}$ satisfies \eqref{eq:25} for all $U_{1},U_{2}\in X_{\infty}$ and there is a unique fixed point $U_{\infty}\in X_{\infty}$ of $\widehat{\mathcal{T}}$, that is, $\widehat{\mathcal{T}}(U_{\infty})=U_{\infty}$.
\end{theo}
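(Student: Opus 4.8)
The plan is to run the natural iteration $U^{k}=\widehat{\mathcal{T}}^{k}(U^{0})$, to show that its consecutive increments satisfy the Fibonacci-type recursion of Lemma~\ref{optimal}, and to use that lemma to produce a Cauchy sequence whose limit is the fixed point. The preliminary work is to promote the two hypotheses on $\mathcal{T}$ to the corresponding statements for $\widehat{\mathcal{T}}$ on all of $X_{\infty}$.

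First I would record that $X\subseteq X_{\infty}$ (take constant sequences in \eqref{eq:24}), so that $X_{\infty}\neq\emptyset$ and $\widehat{\mathcal{T}}$ really extends $\mathcal{T}$. Then, to prove $\widehat{\mathcal{T}}(X_{\infty})\subseteq X_{\infty}$, fix $U\in X_{\infty}$ with a sequence $\{U^{k}\}\subset X$ realizing it, so $U^{k}\to U$ and $\mathcal{T}(U^{k})\to V:=\widehat{\mathcal{T}}(U)$. The point is to read \eqref{eq:25} as a Cauchy estimate: for indices $m,n$ one has $\|\mathcal{T}^{2}(U^{m})-\mathcal{T}^{2}(U^{n})\|_{y}\le\alpha_{0}\bigl(\|\mathcal{T}(U^{m})-\mathcal{T}(U^{n})\|_{y}+\|U^{m}-U^{n}\|_{y}\bigr)$, and the right-hand side tends to $0$ because $\{U^{k}\}$ and $\{\mathcal{T}(U^{k})\}$ converge, hence are Cauchy. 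Thus $\{\mathcal{T}^{2}(U^{k})\}$ is Cauchy and converges to some $Z\in Y$. Now $\{\mathcal{T}(U^{k})\}\subset X$ (since $\mathcal{T}$ maps $X$ into $X$) converges to $V$ while $\{\mathcal{T}(\mathcal{T}(U^{k}))\}$ converges to $Z$, so by the definition of $X_{\infty}$ and hypothesis (i) we get $V\in X_{\infty}$ and $\widehat{\mathcal{T}}(V)=Z$; in particular $\widehat{\mathcal{T}}(X_{\infty})\subseteq X_{\infty}$ and $\widehat{\mathcal{T}}^{2}(U)=\lim_{k}\mathcal{T}^{2}(U^{k})$. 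Feeding realizing sequences $\{U_{1}^{k}\},\{U_{2}^{k}\}$ for two points $U_{1},U_{2}\in X_{\infty}$ into \eqref{eq:25} and letting $k\to\infty$ (using the previous identifications of $\widehat{\mathcal{T}}$ and $\widehat{\mathcal{T}}^{2}$ as limits, together with continuity of the norm) shows that $\widehat{\mathcal{T}}$ satisfies \eqref{eq:25} on $X_{\infty}$.

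Next I would set up the iteration: pick $U^{0}\in X_{\infty}$ and $U^{k+1}:=\widehat{\mathcal{T}}(U^{k})\in X_{\infty}$, and put $a_{k}:=\|U^{k+1}-U^{k}\|_{y}$. Applying the $\widehat{\mathcal{T}}$-version of \eqref{eq:25} to the pair $(U^{k-2},U^{k-1})$ and using $\widehat{\mathcal{T}}(U^{k-2})=U^{k-1}$, $\widehat{\mathcal{T}}(U^{k-1})=U^{k}$, $\widehat{\mathcal{T}}^{2}(U^{k-2})=U^{k}$, $\widehat{\mathcal{T}}^{2}(U^{k-1})=U^{k+1}$ yields precisely $a_{k}\le\alpha_{0}(a_{k-1}+a_{k-2})$ for all $k\ge 2$. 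By Lemma~\ref{optimal}, $\sum_{k}a_{k}<\infty$, so $\{U^{k}\}$ is Cauchy in the Banach space $Y$ and converges to some $U_{\infty}\in Y$, and also $U^{k+1}=\widehat{\mathcal{T}}(U^{k})\to U_{\infty}$.

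The remaining point --- which I expect to be the main obstacle, since $X_{\infty}$ need not be closed and $\widehat{\mathcal{T}}$ need not be continuous --- is to show $U_{\infty}\in X_{\infty}$ and $\widehat{\mathcal{T}}(U_{\infty})=U_{\infty}$. I would argue by a diagonal choice: for each $k$, since $U^{k}\in X_{\infty}$ there is $W^{k}\in X$ with $\|W^{k}-U^{k}\|_{y}<1/k$ and $\|\mathcal{T}(W^{k})-\widehat{\mathcal{T}}(U^{k})\|_{y}<1/k$ (a realizing sequence of $U^{k}$ approximates both $U^{k}$ and, through $\mathcal{T}$, the value $\widehat{\mathcal{T}}(U^{k})$); since $\widehat{\mathcal{T}}(U^{k})=U^{k+1}$, both $W^{k}\to U_{\infty}$ and $\mathcal{T}(W^{k})\to U_{\infty}$ in $Y$. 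By the definition of $X_{\infty}$ this gives $U_{\infty}\in X_{\infty}$, and by (i) the value $\widehat{\mathcal{T}}(U_{\infty})$ equals the common limit $U_{\infty}$, so $U_{\infty}$ is a fixed point. Uniqueness is then immediate: if $U_{\infty},\widetilde{U}_{\infty}\in X_{\infty}$ are both fixed (hence also fixed by $\widehat{\mathcal{T}}^{2}$), the $\widehat{\mathcal{T}}$-version of \eqref{eq:25} gives $\|U_{\infty}-\widetilde{U}_{\infty}\|_{y}\le 2\alpha_{0}\|U_{\infty}-\widetilde{U}_{\infty}\|_{y}$, and $2\alpha_{0}<1$ forces $U_{\infty}=\widetilde{U}_{\infty}$.
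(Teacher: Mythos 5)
Your proposal is correct; all steps check out, including the diagonal selection at the end, and the uniqueness argument matches the paper's. The difference from the paper's proof lies in where the iteration is run. The paper starts from $U^{0}\in X$ and iterates $\mathcal{T}$ itself, so every iterate $V^{k}$ stays in $X$; once Lemma~\ref{optimal} makes $\{\mathcal{T}(V^{k})\}$ Cauchy, the pair of convergences $V^{k}\rightarrow U_{\infty}$ and $\mathcal{T}(V^{k})\rightarrow U_{\infty}$ is literally an instance of \eqref{eq:24}, so $U_{\infty}\in X_{\infty}$ and $\widehat{\mathcal{T}}(U_{\infty})=U_{\infty}$ follow at once from the definition and (i), with the extended inequality \eqref{eq:25} on $X_{\infty}$ needed only for uniqueness. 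You instead iterate $\widehat{\mathcal{T}}$ from an arbitrary $U^{0}\in X_{\infty}$, which forces you to invoke the extended inequality already for the existence part and, since the iterates live only in $X_{\infty}$, to perform the extra diagonal step choosing $W^{k}\in X$ with $\|W^{k}-U^{k}\|_{y}<1/k$ and $\|\mathcal{T}(W^{k})-\widehat{\mathcal{T}}(U^{k})\|_{y}<1/k$ so that the limit can be recognized as an element of $X_{\infty}$. That step is sound (it is exactly how one exploits that $\widehat{\mathcal{T}}$ is defined through approximating sequences in $X$), and it buys a slightly stronger statement, namely convergence of the $\widehat{\mathcal{T}}$-iteration from any starting point of $X_{\infty}$; the paper's choice of starting the iteration inside $X$ buys simplicity, making the membership $U_{\infty}\in X_{\infty}$ automatic.
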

\begin{proof}
Step 1. We begin by showing that $\widehat{\mathcal{T}}(X_{\infty})\subseteq X_{\infty}$. If $V\in\widehat{\mathcal{T}}(X_{\infty})$ there is $U\in X_{\infty}$ such that $V=\widehat{\mathcal{T}}(U)$. This means that there is a sequence $\left\lbrace U^{k}\right\rbrace\subset X$ such that $U^{k}\rightarrow U$ and $\mathcal{T}(U^{k}):=V^{k}\rightarrow V$ in $Y$. Since $V^{k}\in X$ for all $k\in\mathbb{N}$ we only have to show that there is a $V^{\ast}\in Y$ such that $\mathcal{T}(V^{k})\rightarrow V^{\ast}$ in Y. According with \eqref{eq:25}, $V^{k}=\mathcal{T}(U^{k})$ and $V^{l}=\mathcal{T}(U^{l})$ satisfy that 
\begin{align*}
\|\mathcal{T}(V^{k})-\mathcal{T}(V^{l})\|_{y}\leq\alpha_{0}\left(\|\mathcal{T}(U^{k})-\mathcal{T}(U^{l})\|_{y}+\|U^{k}-U^{l}\|_{y}\right).
\end{align*}
Therefore, $\left\lbrace\mathcal{T}(V^{k})\right\rbrace$ is a Cauchy sequence in $Y$ and there is $V^{\ast}\in Y$ such that $\mathcal{T}(V^{k})\rightarrow V^{\ast}$ in $Y$. Hence, $V\in X_{\infty}$ and the result follows.\\
In particular, we have shown that if $U\in X_{\infty}$ then $\widehat{\mathcal{T}}^{2}(U)$ is well-defined since $V=\widehat{\mathcal{T}}(U)$ and $\widehat{\mathcal{T}}(V)=V^{\ast}$ imply that $\widehat{\mathcal{T}}^{2}(U)=V^{\ast}$.\\
Step 2. We proceed to show that \eqref{eq:25} is satisfied by $\widehat{\mathcal{T}}$ in $X_{\infty}$. Let $U_{1},U_{2}\in X_{\infty}$. Then, there are sequences $\left\lbrace U_{1}^{k}\right\rbrace,\left\lbrace U_{2}^{k}\right\rbrace\in X$ and vectors $V_{1},V_{2}\in Y$ such that $U_{1}^{k}\rightarrow U_{1}$, $U_{2}^{k}\rightarrow U_{2}$, $V_{1}^{k}:=\mathcal{T}(U_{1}^{k})\rightarrow V_{1}$ and $V_{2}^{k}:=\mathcal{T}(U_{2}^{k})\rightarrow V_{2}$ in $Y$. Moreover, as we previously showed, there are vectors $V_{1}^{\ast},V_{2}^{\ast}\in Y$ such that $\mathcal{T}(V_{1}^{k})\rightarrow V_{1}^{\ast}$ and $\mathcal{T}(V_{2}^{k})\rightarrow V_{2}^{\ast}$ in $Y$. As consequence of \eqref{eq:25}, for each $k\in\mathbb{N}$, we have that
\begin{align*}
\|\mathcal{T}^{2}(U_{1}^{k})-\mathcal{T}^{2}(U_{2}^{k})\|_{y}\leq\alpha_{0}\left\lbrace\|\mathcal{T}(U_{1}^{k})-\mathcal{T}(U_{2}^{k})\|_{y}+\|U_{1}^{k}-U_{2}^{k}\|_{y}\right\rbrace.
\end{align*}
By taking the limit in the last inequality and recalling that $\widehat{\mathcal{T}}(U_{i})=V_{i}$ and $\widehat{\mathcal{T}}^{2}(U_{i})=\widehat{\mathcal{T}}(V_{i})=V_{i}^{\ast}$ for $i=1,2$ we obtain that
\begin{equation}
	\|\widehat{\mathcal{T}}^{2}(U_{2})-\widehat{\mathcal{T}}^{2}(U_{1})\|_{y}\leq\alpha_{0}\left\lbrace\|\widehat{\mathcal{T}}(U_{2})-\widehat{\mathcal{T}}(U_{1})\|_{y}+\|U_{2}-U_{1}\|_{y}\right\rbrace
	\label{eq:26}
	\end{equation}
for all $U_{1},U_{2}\in X_{\infty}$.\\
Step 3. Let $U^{0}\in X$. Define $\mathcal{T}(U^{0})=:V^{0}$, $\mathcal{T}^{2}(U^{0})=\mathcal{T}(V^{0})=:V^{1}$ and in general, for $k\in\mathbb{N}$ such that $k\geq 2$, $\mathcal{T}(V^{k})=:V^{k+1}$. Then, by $(ii)$, for all $k\geq 2$,
\[\|\mathcal{T}(V^{k+1})-\mathcal{T}(V^{k})\|_{y}\leq\alpha_{0}\left\lbrace\|\mathcal{T}(V^{k})-\mathcal{T}(V^{k-1})\|_{y}+\|\mathcal{T}(V^{k-1})-\mathcal{T}(V^{k-2})\|_{y}\right\rbrace.\]
If we set $a_{k}:=\|\mathcal{T}(V^{k+1})-\mathcal{T}(V^{k})\|_{y}$, the previous inequality reads
\[a_{k}\leq\alpha_{0}\left(a_{k-1}+a_{k-2}\right),\]
and by Lemma \ref{optimal}, $\sum_{k}a_{k}<\infty$. This implies that $\left\lbrace \mathcal{T}(V^{k})\right\rbrace$ is a Cauchy sequence in $Y$, and so, there is a $U_{\infty}\in Y$ such that $\mathcal{T}(V^{k})\rightarrow U_{\infty}$. Moreover, since $\mathcal{T}(V^{k})=V^{k+1}$ we also have that $V^{k}\rightarrow U_{\infty}$. Then, by $(i)$, $U_{\infty}\in X_{\infty}$ and $\widehat{\mathcal{T}}(U_{\infty})=U_{\infty}$. This proves the existence of the fixed point. For the uniqueness assume that $U_{\infty},V_{\infty}\in X_{\infty}$ are fixed points of $\widehat{\mathcal{T}}$. Then, \eqref{eq:26} yields 
\[\|U_{\infty}-V_{\infty}\|_{Y}\leq\alpha_{0}\left(\|U_{\infty}-V_{\infty}\|_{y}+\|U_{\infty}-V_{\infty}\|_{y}\right),\]
and since $0<2\alpha_{0}< 1$, it follows that
\[\|U_{\infty}-V_{\infty}\|_{Y}<\|U_{\infty}-V_{\infty}\|_{y}.\]
Hence, it would be impossible that $U_{\infty}\neq V_{\infty}$. This concludes the proof.
\end{proof}
\begin{rem}
Observe that $X\subseteq X_{\infty}\subseteq\overline{X}$. Thus, if $X$ is a closed set in $Y$, $X=X_{\infty}=\overline{X}$, hence $\mathcal{T}=\widehat{\mathcal{T}}$. Then, $\mathcal{T}$ will be a well-defined continuous operator. In such case, if $\mathcal{T}$ satisfies \eqref{eq:25}, there will be a unique fixed point $U\in X$ for $\mathcal{T}$. Nonetheless, when dealing with the local existence and uniqueness of the Cauchy problem of \eqref{eq:11}, the resulting definitions of $Y$ and $X$ usually prevent the latter from being considered a closed subset of $Y$ \textup{(}see \cite{maj}, \cite{katosym} and \cite{lax}, for example\textup{)}. Although, the existence of a unique fixed point for $\mathcal{T}$ in $X$ can be proven, this property is not a consequence of the closedness of $X$ as it is shown in the following sections.
\end{rem}
It is interesting to note that, if $0<\alpha_{0}\leq\frac{1}{6}$, we can provide an alternative way to show that the sequence $\left\lbrace\mathcal{T}(V^{k})\right\rbrace$, defined in step 3 of the proof of Theorem \ref{fixedpo}, is a Cauchy sequence in $Y$. Indeed, first consider the following propositions.
\begin{prop}
\label{fibonacci}
Let $\left\lbrace a_{l}\right\rbrace_{l\in\mathbb{N}_{0}}$ be a sequence of non-negative real numbers satisfying the iteration rule \eqref{eq:21} for some $0<\alpha_{0}<1$. Then
\begin{equation}
a_{2k}\leq\alpha_{0}^{k}\phi_{2k}\beta_{0}\quad\mbox{and}\quad a_{2k+1}\leq\alpha_{0}^{k}\phi_{2k+1}\beta_{0},
\label{eq:22}
\end{equation}
where $\beta_{0}:=a_{0}+a_{1}$ and $\left\lbrace\phi_{k}\right\rbrace_{k=1}^{\infty}$ is Fibonacci's sequence, i.e. $\phi_{1}=1$, $\phi_{2}=1$, $\phi_{3}=2$, $\phi_{4}=3$,.. and in general $\phi_{k}=\phi_{k-1}+\phi_{k-2}$.
\end{prop}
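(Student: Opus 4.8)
The plan is to prove \eqref{eq:22} by strong induction on $k$, treating the even-indexed and odd-indexed subsequences simultaneously, and using the Fibonacci recursion as the exact combinatorial skeleton of the estimate \eqref{eq:21}. First I would check the base cases: for $k=0$ the claim reads $a_{0}\le\phi_{0}\beta_{0}$ and $a_{1}\le\phi_{1}\beta_{0}$; since $\phi_{1}=1$ (and setting $\phi_0=1$, or simply noting $a_1\le\beta_0=a_0+a_1$ directly, $a_0\le\beta_0$ directly), both hold trivially because $\beta_{0}=a_{0}+a_{1}\ge a_{0}$ and $\ge a_{1}$. For $k=1$ one uses \eqref{eq:21} with $l=2$ and $l=3$: $a_{2}\le\alpha_{0}(a_{1}+a_{0})=\alpha_{0}\beta_{0}=\alpha_0\phi_2\beta_0$, and $a_{3}\le\alpha_{0}(a_{2}+a_{1})\le\alpha_{0}(\alpha_{0}\beta_{0}+\beta_{0})\le 2\alpha_0\beta_0=\alpha_0\phi_3\beta_0$ (using $\alpha_0<1$ so $\alpha_0\beta_0+\beta_0\le 2\beta_0$), which matches $\alpha_{0}^{1}\phi_{3}\beta_{0}$ since $\phi_{3}=2$.

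The inductive step is the heart of the argument. Assuming \eqref{eq:22} holds for all indices up to and including some level, I would establish it for $a_{2k}$ and $a_{2k+1}$. Applying \eqref{eq:21} with $l=2k$ gives $a_{2k}\le\alpha_{0}(a_{2k-1}+a_{2k-2})$; now $a_{2k-1}=a_{2(k-1)+1}$ and $a_{2k-2}=a_{2(k-1)}$ both carry, by the induction hypothesis, the prefactor $\alpha_{0}^{k-1}$, so $a_{2k}\le\alpha_{0}^{k}\beta_{0}(\phi_{2k-1}+\phi_{2k-2})=\alpha_{0}^{k}\phi_{2k}\beta_{0}$ by the Fibonacci recursion. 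The odd case is analogous: $a_{2k+1}\le\alpha_{0}(a_{2k}+a_{2k-1})$, where $a_{2k}$ now carries $\alpha_{0}^{k}$ (just proved) and $a_{2k-1}$ carries $\alpha_{0}^{k-1}$; here one must be slightly careful, because the two terms have different powers of $\alpha_{0}$. The clean way is to bound $a_{2k-1}\le\alpha_{0}^{k-1}\phi_{2k-1}\beta_{0}$ and $a_{2k}\le\alpha_{0}^{k}\phi_{2k}\beta_{0}\le\alpha_{0}^{k-1}\phi_{2k}\beta_{0}$ (legitimate since $\alpha_{0}<1$), giving $a_{2k+1}\le\alpha_{0}^{k}\beta_{0}(\phi_{2k}+\phi_{2k-1})=\alpha_{0}^{k}\phi_{2k+1}\beta_{0}$.

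The main obstacle — really the only subtlety — is this mismatch of powers of $\alpha_{0}$ between consecutive terms in the odd step, which is exactly why the statement is phrased with the split into even and odd subsequences rather than a single bound $a_{l}\le\alpha_{0}^{\lfloor l/2\rfloor}\phi_{l}\beta_{0}$: one needs $\alpha_{0}<1$ to absorb the extra factor and keep the Fibonacci recursion intact. I would present the estimate $\alpha_{0}^{k}\le\alpha_{0}^{k-1}$ explicitly at that point. Once \eqref{eq:22} is in hand, the intended application (that $\{\mathcal{T}(V^{k})\}$ is Cauchy when $0<\alpha_{0}\le\tfrac16$) follows because $\phi_{k}\le 2^{k}$ — indeed $\phi_{k}$ grows like the golden ratio $\varphi\approx1.618<2$ — so $a_{2k}+a_{2k+1}\le C(4\alpha_{0})^{k}\beta_{0}$, and $4\alpha_{0}\le\tfrac23<1$ gives a convergent geometric majorant; but that consequence belongs to the discussion after the proposition, not to the proof of the proposition itself, so I would stop at \eqref{eq:22}.
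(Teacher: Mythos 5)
Your proposal is correct and follows essentially the same route as the paper: strong induction split into even and odd indices, with base cases $a_{2}\leq\alpha_{0}\beta_{0}$ and $a_{3}\leq 2\alpha_{0}\beta_{0}$, and the factor $\alpha_{0}<1$ used exactly where the paper uses it, to absorb the mismatched power in the odd step via $\alpha_{0}\phi_{2k}+\phi_{2k-1}\leq\phi_{2k+1}$. The only (harmless) additions are your explicit $k=0$ base case and the closing remark on the application, neither of which changes the argument.
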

\begin{proof}
Observe that, for $l=2$, the recursion relation in \eqref{eq:21}, yields that $a_{2}\leq\alpha_{0}\beta_{0}$, and since $0<\alpha_{0}<1$, we have that $a_{3}\leq\alpha_{0}\left(\alpha_{0}\beta_{0}+a_{1}\right)\leq\alpha_{0}(2\beta_{0})$. Now assume that the statement is true for the first $l$ natural numbers and consider $a_{l+1}$. Then, \eqref{eq:21} states that $a_{l+1}\leq\alpha_{0}\left(a_{l}+a_{l-1}\right)$, and we have two cases:
\begin{itemize}
	\item [(i)] If $l+1=2k_{0}$ for some $k_{0}\geq 2$, then $l=2(k_{0}-1)+1$ and $l-1=2(k_{0}-1)$ and so, the induction hypothesis yields 
	\begin{eqnarray*}
	a_{2k_{0}}&\leq&\alpha_{0}\left(a_{2(k_{0}-1)+1}+a_{2(k_{0}-1)}\right)\\
	&\leq&\alpha_{0}\left(\alpha_{0}^{k_{0}-1}\phi_{2(k_{0}-1)+1}\beta_{0}+\alpha_{0}^{k_{0}-1}\phi_{2(k_{0}-1)}\beta_{0}\right)\\
	&=&\alpha_{0}^{k_{0}}\beta_{0}\left(\phi_{2(k_{0}-1)+1}+\phi_{2(k_{0}-1)}\right)=\alpha_{0}^{k_{0}}\beta_{0}\phi_{2k_{0}}.
	\end{eqnarray*}
	\item [(ii)] If $l+1=2k_{0}+1$ for some $k_{0}\geq 2$, then $l=2k_{0}$ and $l-1=2(k_{0}-1)+1$, which implies that 
	\begin{eqnarray*}
	a_{2k_{0}+1}&\leq&\alpha_{0}\left(a_{2k_{0}}+a_{2(k_{0}-1)+1}\right)\\
	&\leq&\alpha_{0}\left(\alpha_{0}^{k_{0}}\phi_{2k_{0}}\beta_{0}+\alpha_{0}^{k_{0}-1}\phi_{2k_{0}-1}\beta_{0}\right)\\
	&=&\alpha_{0}^{k_{0}}\left(\alpha_{0}\phi_{2k_{0}}\beta_{0}+\phi_{2k_{0}-1}\beta_{0}\right)\leq\alpha_{0}^{k_{0}}\phi_{2k_{0}+1}\beta_{0}.
	\end{eqnarray*}
\end{itemize}
\end{proof}
\begin{prop}
Let $\left\lbrace a_{l}\right\rbrace_{l\in\mathbb{N}_{0}}$ be a sequence of non-negative real numbers satisfying the iteration rule \eqref{eq:21}, for some $0<\alpha_{0}\leq\frac{1}{6}$. Then, 
\begin{equation}
a_{2k},~a_{2k+1}\leq\frac{\beta_{0}}{2^{k}}\quad\mbox{for all}\quad k\in\mathbb{N}.
\label{eq:fibonaccidecay}
\end{equation}
\end{prop}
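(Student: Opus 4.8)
The plan is to bootstrap directly from Proposition \ref{fibonacci}, which already asserts that the very same sequence satisfies $a_{2k}\le\alpha_0^{k}\phi_{2k}\beta_0$ and $a_{2k+1}\le\alpha_0^{k}\phi_{2k+1}\beta_0$. Since Fibonacci's sequence is non-decreasing, $\phi_{2k}\le\phi_{2k+1}$, so both bounds are dominated by $\alpha_0^{k}\phi_{2k+1}\beta_0$. Now use the hypothesis $0<\alpha_0\le\tfrac16$, which gives $\alpha_0^{k}\le 6^{-k}$ and hence
\[
a_{2k},\ a_{2k+1}\ \le\ \frac{\phi_{2k+1}}{6^{k}}\,\beta_0\ =\ \frac{\phi_{2k+1}}{3^{k}}\cdot\frac{\beta_0}{2^{k}}.
\]
Thus the entire statement reduces to the elementary arithmetic inequality $\phi_{2k+1}\le 3^{k}$, valid for every $k\in\mathbb{N}$.

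To prove this I would first record the auxiliary estimate $\phi_{m+2}\le 3\phi_m$ for all $m\ge 2$: indeed $\phi_{m+2}=\phi_{m+1}+\phi_{m}=2\phi_{m}+\phi_{m-1}\le 3\phi_{m}$, the last step because $\phi_{m-1}\le\phi_{m}$ when $m\ge 2$. Then argue by induction on $k$. The base case $k=1$ is simply $\phi_{3}=2\le 3$. For the inductive step, apply the auxiliary estimate with $m=2k+1\ (\ge 3)$ and then the induction hypothesis to obtain $\phi_{2(k+1)+1}=\phi_{2k+3}\le 3\phi_{2k+1}\le 3\cdot 3^{k}=3^{k+1}$, closing the induction.

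Combining this with the displayed inequality yields $a_{2k},a_{2k+1}\le\beta_0/2^{k}$ for all $k\in\mathbb{N}$, which is \eqref{eq:fibonaccidecay}. I do not expect a genuine obstacle here: the only point worth isolating is the choice of the intermediate exponential bound $3^{k}$. The asymptotic growth rate of $\phi_{2k+1}$ is $\varphi^{2}=\tfrac{3+\sqrt5}{2}\approx 2.618$, so for large $k$ any threshold $\alpha_0<\tfrac1{2\varphi^{2}}$ would suffice; the specific value $\tfrac16$ is exactly what makes the fully elementary bound $\phi_{2k+1}\le 3^{k}$ hold simultaneously for all $k\ge 1$ (note $2\cdot\tfrac16\cdot 3=1$), which is the reason the hypothesis is phrased that way.
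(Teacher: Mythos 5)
Your argument is correct and is essentially the paper's proof: the paper likewise deduces the claim from Proposition \ref{fibonacci} via the bound $\phi_{2k},\phi_{2k+1}\leq 3^{k}$ (stated there as an easy induction), combined with $\alpha_{0}^{k}\leq 6^{-k}$. You have simply written out the induction on $\phi_{2k+1}\leq 3^{k}$ explicitly, which the paper leaves to the reader.
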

\begin{proof}
By induction it is easy to show that, $\phi_{2k},\phi_{2k+1}\leq 3^{k}$ and as consequence, 
\begin{eqnarray*}
\alpha_{0}^{k}\phi_{2k}\beta_{0},~\alpha_{0}^{k}\phi_{2k+1}\beta_{0}\leq\frac{1}{2^{k}}\beta_{0}.
\end{eqnarray*}
Then, by \eqref{eq:22} the result follows.
\end{proof}
Now assume that we want to show that $\left\lbrace \mathcal{T}(V^{k})\right\rbrace$ is a Cauchy sequence in $Y$ and take $m,n\in\mathbb{N}$ such that $m>n$. By the triangle inequality and \eqref{eq:fibonaccidecay}, it follows that 
\begin{align*}
\|\mathcal{T}(V^{m})-\mathcal{T}(V^{n})\|_{y}&\leq a_{m-1}+a_{m-2}+\cdots+a_{n+1}+a_{n}\\
&\leq\beta_{0}\left(\frac{1}{2^{[\tfrac{m-1}{2}]}}+\frac{1}{2^{[\tfrac{m-2}{2}]}}+\cdots+\frac{1}{2^{[\tfrac{n+1}{2}]}}+\frac{1}{2^{[\tfrac{n}{2}]}}\right),
\end{align*}
where $[x]$ denotes the integer part of $x$, that is, the only integer $l=[x]$ such that $l\leq x<l+1$. Then, 
\begin{align*}
\|\mathcal{T}(V^{m})-\mathcal{T}(V^{n})\|_{y}&\leq\frac{\beta_{0}}{2^{[\tfrac{n}{2}]}}\left(\frac{1}{2^{[\tfrac{m-1}{2}]-[\tfrac{n}{2}]}}+\frac{1}{2^{[\tfrac{m-2}{2}]-[\tfrac{n}{2}]}}+\cdots+\frac{1}{2^{[\tfrac{n+1}{2}]-[\tfrac{n}{2}]}}+1\right)\\
&\leq\frac{\beta_{0}}{2^{[\tfrac{n}{2}]}}\left(1+2\sum_{k=0}^{\infty}\frac{1}{2^{k}}\right).
\end{align*}
Given $\epsilon>0$ there is $n_{0}\in\mathbb{N}$ such that, $\beta_{0}\left(1+2\sum_{k=0}^{\infty}\frac{1}{2^{k}}\right)<\epsilon 2^{[\tfrac{n}{2}]}$, for all $n\geq n_{0}$. Therefore, $\left\lbrace\mathcal{T}(V^{k})\right\rbrace$ is a Cauchy sequence in $Y$.
\section{Well-posedness for linear equations}
\subsection{Linear parabolic system} Let us consider the following linear Cauchy problem
\begin{eqnarray}
A^{0}(x,t)u_{t}-B^{ij}(x,t)\partial_{i}\partial_{j}u&=&f(x,t)-A^{i}(x,t)\partial_{i}u-D(x,t)u\label{eq:41}\\
u(x,0)&=&u_{0}(x)\label{eq:42}
\end{eqnarray}
where $x\in\mathbb{R}^{d}$, $t\in[0,T]$ with $T>0$ be given, $u=u(x,t)\in\mathbb{R}^{N}$, $A^{0},B^{ij},A^{i},D$ are square matrices of order $N\times N$ for each $(x,t)\in\mathbb{R}^{d}\times[0,T]$, and $f=f(x,t)\in\mathbb{R}^{N}$ is a given function.\\
In the following, each capital letter $L$ representing the matrix coefficients is understood as the following map $L=L(t)=L(\cdot,t)$. The dependence on $t$ will be made explicit only when is needed. Furthermore, $s\in\mathbb{N}$ is such that  $s\geq s_{0}+1$, $s_{0}=\left[\frac{d}{2}\right]+1$, where $l=\left[\frac{d}{2}\right]$ is the only integer that satisfies that $l\leq\frac{d}{2}<l+1$.
Let us state the following assumptions on the matrix coefficients:
\begin{itemize}
	\item [\textbf{H1}] The matrix $A^{0}$ is symmetric and $B^{ij}=B^{ji}$ for all $i,j=1,...d$. For each $(x,t)\in Q_{T}$, the symbol $\sum_{i,j=1}^{d}B^{ij}(x,t)\omega_{i}\omega_{j}$ is symmetric and we assume the existence of a positive constant $\eta>0$ such that the Legendre-Hadamard ellipticity condition is satisfied, namely
	\[\left(B^{ij}(x,t)\xi_{i}\xi_{j}v,v\right)_{\mathbb{R}^{N}}\geq\eta|\xi|^{2}|v|^{2}\]
	for all $\xi=(\xi_{1},..,\xi_{d})\in\mathbb{R}^{d}$, $v\in\mathbb{R}^{N}$ and $(x,t)\in Q_{T}$. Where $(\cdot,\cdot)_{\mathbb{R}^{N}}$ denotes the inner product in $\mathbb{R}^{N}$ and the index summation convention has been used. Also, there are two positive constants $a_{0}$ and $a_{1}$ such that for all $v\in\mathbb{R}^{N}$,
	\[a_{0}|v|^{2}\leq\left(A^{0}(x,t)v,v\right)_{\mathbb{R}^{N}}\leq a_{1}|v|^{2}\quad\forall~(x,t)\in\mathbb{R}^{d}\times[0,T].\]
	\item [\textbf{H2}] $A^{0},(A^{0})^{-1}\in L^{\infty}(0,T;\widehat{H}^{s})$. In particular, there is a positive constant $g$ such that 
	\[\|A^{0}\|_{L^{\infty}(0,T;\widehat{H}^{s})},\|(A^{0})^{-1}\|_{L^{\infty}(0,T;\widehat{H}^{s})}\leq g.\]
	\item [\textbf{H3}] $\partial_{t}A^{0}\in L^{2}(0,T; H^{s-1})$.
	\item [\textbf{H4}] $A^{i},D\in L^{2}(0,T;\widehat{H}^{s})$ for all $i=1,..,d$.
	\item [\textbf{H5}] $D_{x} B^{ij}\in L^{\infty}(0,T;H^{s-1})$ and $\partial_{t}B^{ij}\in L^{2}(0,T;H^{s-1})$ for all $i,j=1,...,d$.
	\item [\textbf{H6}] $B^{ij}(\cdot,t)$ is uniformly continuous for each $t\in[0,T]$ and $B^{ij}\in L^{\infty}(Q_{T})$ where $Q_{T}:=[0,T]\times\mathbb{R}^{d}$ for all $i,j=1,...,d$. Therefore, in combination with \textbf{H5}, we obtain that $B^{ij}\in L^{\infty}(0,T;\widehat{H}^{s})$. In particular, there is a positive constant $g$ large enough so that 
	\[\sum_{i,j=1}^{d}\|B^{ij}\|_{L^{\infty}(0,T;\widehat{H}^{s})}\leq g.\]
	\end{itemize} 
Let $T>0$ and $1\leq m\leq s$. We define the set
\small
\begin{equation}
\mathcal{P}_{m}(T):=\left\lbrace u\in L^{\infty}(0,T;H^{m}): u_{t}\in L^{2}(0,T;H^{m-1}),~\nabla u\in L^{2}(0,T;H^{m})\label{eq:43}\right\rbrace,
\end{equation}
\normalsize
which is a Banach space with the norm
\begin{equation}
\|u\|_{\mathcal{P}_{m}(T)}^{2}:=\esssup_{0\leq t\leq T}\|u(t)\|^{2}_{m}+\int_{0}^{T}\|u_{t}(t)\|_{m-1}^{2}+\|\nabla u(t)\|_{m}^{2}dt.\label{eq:44}
\end{equation}
By the energy method, we can show the following result (see Appendix A).
\begin{theo}
\label{parabolicestimate}
Let $s\in\mathbb{N}$ such that $s\geq s_{0}+1$, $s_{0}=\left[\frac{d}{2}\right]+1$ and assume \textup{\textbf{H1}}-\textup{\textbf{H6}}. Set
\begin{equation}
\mu_{0}(t):=\sum_{i=1}^{d}\|A^{i}\|_{\bar{s}}^{2}+\sum_{i,j=1}^{d}\|B^{ij}\|_{\bar{s}}^{2}+\sum_{i=1}^{d}\|A^{i}\|_{\bar{s}}+\|D\|_{\bar{s}}^{2}+\|D\|_{\bar{s}}+1\label{eq:413}
\end{equation}
and
\begin{equation}
\mu_{1}(t):=\|\partial_{t}A^{0}\|_{s-1}+\sum_{i,j=1}^{d}\|\partial_{t}B^{ij}\|_{s-1}.
\label{eq:414}
\end{equation}
Let $1\leq m\leq s$, $u_{0}\in H^{m}$ and $f\in L^{2}(0,T;H^{m-1})$. If $u\in\mathcal{P}_{m}(T)$ is a solution of the Cauchy problem \eqref{eq:41}-\eqref{eq:42} then,
\begin{equation}
\|u(t)\|_{m}^{2}+\int_{0}^{t}\left(\|u(\tau)\|_{m+1}^{2}+\|u_{t}(\tau)\|_{m-1}^{2}\right)d\tau\leq J_{0}^{2}(t)\Psi_{0}^{2}(t),\label{eq:441}
\end{equation}
for all $t\in[0,T]$, where 
\begin{equation}
\Psi_{0}^{2}(t):=C_{1}e^{C_{1}\int_{0}^{t}(\mu_{0}(\tau)+\mu_{1}(\tau))d\tau,}
\label{eq:442}
\end{equation}
\begin{equation}
J_{0}^{2}(t):=\|u_{0}\|_{m}^{2}+\int_{0}^{t}\|f(\tau)\|_{m-1}^{2}d\tau,
\label{eq:443}
\end{equation}
and $C_{1}=C_{1}(g,\eta, a_{0})$ is a positive constant.
\end{theo}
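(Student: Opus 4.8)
The plan is to derive the estimate \eqref{eq:441} by the standard energy method applied to the system \eqref{eq:41}, working at the level of the highest derivatives and then summing. First I would fix a multi-index $\alpha$ with $|\alpha|\leq m$, apply $\partial_x^\alpha$ to \eqref{eq:41}, and take the $L^2$ inner product of the result with $2\partial_x^\alpha u$. The leading-order terms produce, via the symmetry of $A^0$ in \textbf{H1}, a term $\frac{d}{dt}(A^0\partial_x^\alpha u,\partial_x^\alpha u)$ minus a commutator-type remainder that picks up $\partial_t A^0$; the second-order term $-B^{ij}\partial_i\partial_j u$, after integration by parts and use of the Legendre--Hadamard condition together with G\aa rding's inequality, yields a coercive contribution $\gtrsim \eta\|\nabla\partial_x^\alpha u\|^2$ minus lower-order terms controlled by $\|D_x B^{ij}\|_{\bar s}$ and $\|\nabla u\|_m$. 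The key technical inputs are the commutator estimate \eqref{eq:37} applied to $G_\alpha(A^0,u_t)$, $G_\alpha(A^i,\partial_i u)$, $G_\alpha(B^{ij},\partial_i\partial_j u)$ and $G_\alpha(D,u)$, together with the algebra/multiplication inequalities \eqref{eq:32} and \eqref{eq:34}, all of which are available because $s\geq s_0+1>\tfrac d2+1$.

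After summing over $|\alpha|\leq m$, the outcome should be a differential inequality of the shape
\begin{equation*}
\frac{d}{dt}\bigl(A^0 u,u\bigr)_m + c\,\eta\|\nabla u\|_m^2 \leq C\bigl(\mu_0(t)+\mu_1(t)\bigr)\|u\|_m^2 + \|f\|_{m-1}^2,
\end{equation*}
where $(A^0 u,u)_m:=\sum_{|\alpha|\le m}(A^0\partial_x^\alpha u,\partial_x^\alpha u)$ is equivalent to $\|u\|_m^2$ with constants depending on $a_0,a_1$ from \textbf{H1}. The recovery of $\|u_t\|_{m-1}^2$ in \eqref{eq:441} comes from solving \eqref{eq:41} algebraically for $u_t=(A^0)^{-1}(f-A^i\partial_i u+B^{ij}\partial_i\partial_j u-Du)$ and estimating the right-hand side in $H^{m-1}$ using \textbf{H2}, \textbf{H4}, \textbf{H6} and the already-controlled $\|\nabla u\|_m$; this is why $\|u_t\|_{m-1}^2$ can be absorbed into the bound rather than appearing on the left of the energy identity. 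Integrating the differential inequality in time and invoking Gronwall's lemma produces the factor $\Psi_0^2(t)=C_1 e^{C_1\int_0^t(\mu_0+\mu_1)}$ multiplying $J_0^2(t)=\|u_0\|_m^2+\int_0^t\|f\|_{m-1}^2$, which is exactly \eqref{eq:441}.

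The main obstacle I anticipate is handling the second-order term cleanly: unlike the Friedrichs-symmetrizable case, here we only have the Legendre--Hadamard condition rather than pointwise positivity of $B^{ij}$, so the coercivity of $\int (B^{ij}\partial_i\partial_j u,u)$ in $H^{m+1}$ is not immediate and must be obtained through a G\aa rding-type argument — one freezes coefficients, uses the ellipticity symbol inequality in Fourier space, and then controls the error from the spatial variation of $B^{ij}$ using \textbf{H5} and \textbf{H6} (uniform continuity and $L^\infty$ bounds), absorbing a small multiple of $\|\nabla u\|_m^2$ back to the left. A secondary delicate point is the bookkeeping of which norms of the coefficients appear where, so that the final constants organize precisely into $\mu_0$ and $\mu_1$ as defined in \eqref{eq:413}--\eqref{eq:414}; this is routine but must be done carefully so that $\mu_1$ (the time-derivative norms, which are only $L^2$ in time by \textbf{H3} and \textbf{H5}) enters linearly inside the exponential, consistent with Gronwall. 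Since the statement refers to Appendix A for the details, I would present this as the outline and defer the full commutator and G\aa rding computations there.
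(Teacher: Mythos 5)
Your outline follows the paper's strategy for the core estimate almost verbatim: apply $\partial_x^\alpha$, pair with $2\partial_x^\alpha u$, use the symmetry of $A^0$ to produce the quadratic form and the $\partial_t A^0$ remainder, estimate commutators with \eqref{eq:37} and the product rules \eqref{eq:32}, \eqref{eq:34}, obtain coercivity from a G\r{a}rding-type inequality under Legendre--Hadamard (exactly \eqref{eq:garding}), isolate $\|\nabla u\|_m^2$ with Cauchy's weighted inequality, and close with Gronwall to get \eqref{eq:417}. The genuinely different point is the time derivative: the paper obtains $\int_0^t\|u_t\|_{m-1}^2$ by a second energy identity, pairing \eqref{eq:47} with $2\partial_x^\alpha u_t$ for $|\alpha|\le m-1$, which requires the delicate space--time integrations by parts on the $B^{ij}$ terms in \eqref{eq:418}--\eqref{eq:425}, a second use of G\r{a}rding and Gronwall, and then \eqref{eq:441} is the sum of \eqref{eq:417} and \eqref{eq:440}. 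You instead read $u_t$ off the equation and estimate it in $H^{m-1}$. That route is legitimate here: $\|(A^0)^{-1}B^{ij}\partial_i\partial_j u\|_{m-1}\le C g^2\|u\|_{m+1}$ and the first estimate already controls $\int_0^t\|u\|_{m+1}^2$, while the factor $\int_0^t\mu_0$ coming from the $A^i,D$ terms can be folded into the exponential (using $x\le e^x$), so the bound still has the form $J_0^2(t)\Psi_0^2(t)$ with $C_1=C_1(g,\eta,a_0)$ enlarged. It is simpler than the paper's argument, whose added value is that it simultaneously re-derives $\|u(t)\|_m^2$ and is the template reused verbatim for the coupled, partially degenerate system in Appendix B.

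Two caveats. First, commuting $\partial_x^\alpha$ directly with \eqref{eq:41} produces the term $G_\alpha(A^0,u_t)$ you list, and \eqref{eq:37} bounds it by $\|\nabla A^0\|_{s-1}\|u_t\|_{m-1}$, so its pairing with $\partial_x^\alpha u$ contributes $g\|u_t\|_{m-1}\|u\|_m$, which does not fit the differential inequality you state (neither $\mu_0$ nor $\mu_1$ contains $u_t$). Either adopt the paper's device of multiplying by $(A^0)^{-1}$ before applying $\partial_x^\alpha$ and then restoring $A^0$, so that the only commutator is $G_\alpha((A^0)^{-1}B^{ij},\partial_i\partial_j u)$ as in \eqref{eq:47}, or substitute your algebraic expression for $u_t$ at this point and absorb the resulting $g^2\|u\|_{m+1}\|u\|_m$ into the G\r{a}rding coercivity via a weighted Cauchy inequality. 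Second, the formal computations (in particular pairing with $\partial_x^\alpha u$ for $|\alpha|=m$ and the $f$-term with only $f\in L^2(0,T;H^{m-1})$) are justified in the paper by first working under the extra regularity assumption \textbf{ER} and then removing it through the mollified equation \eqref{eq:444}; your outline omits this regularization step, which, while standard, is part of the proof of the theorem as stated.
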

The existence and uniqueness of solutions of \eqref{eq:41}-\eqref{eq:42} can be addressed by means of an evolution semigroup approach. By making the new assumptions
\begin{itemize}
	\item [$\mathbf{H2^{\prime}}$] $A^{0},(A^{0})^{-1}\in \mathcal{C}([0,T];\widehat{H}^{s})$. In particular, 
	\[\|A^{0}\|_{\mathcal{C}([0,T];\widehat{H}^{s})},\|(A^{0})^{-1}\|_{\mathcal{C}([0,T];\widehat{H}^{s})}\leq g\]
	\item [$\mathbf{H4^{\prime}}$] $A^{i},D\in \mathcal{C}([0,T];\widehat{H}^{s})$ for all $i=1,..,d$,
	\end{itemize}
	instead of \textbf{H2} and \textbf{H4} respectively, $f\in\mathcal{C}([0,T];H^{m-1})$ and introducing,
	\begin{itemize}
		\item [\textbf{H7}] $\partial_{t}A^{i},\partial_{t}D\in L^{2}(0,T;H^{s-1})$ for all $i=1,..,d$,
	\end{itemize}
is easy to show that, theorem 1 in \cite{kato1} can be satisfied by setting
\begin{align}
X=&L^{2},~U=u,~U_{0}=u_{0},\nonumber\\
A(t)=&(A^{0})^{-1}\left\lbrace B^{ij}(t)\partial_{i}\partial_{j}\cdot-A^{i}(t)\partial_{i}\cdot-D(t)\cdot\right\rbrace,\nonumber\\
D(A(t))=&Y=H^{2},S(t)=\lambda_{0}I-A(t),~B(t)=0,\nonumber
\end{align}
for some $\lambda_{0}$ sufficiently large (see \cite{kawa} and \cite{matsu}). Thus, the existence of a solution, 
\begin{equation}
u\in\mathcal{C}([0,T];H^{2})\cap\mathcal{C}^{1}([0,T];L^{2}),
\label{eq:445}
\end{equation}
for the problem \eqref{eq:41}-\eqref{eq:42} follows. 
\begin{theo}
\label{parabolicwp}
Let $s\in\mathbb{N}$ such that $s\geq s_{0}+1$, $s_{0}=\left[\frac{d}{2}\right]+1$. Assume that \textup{\textbf{H1}}, $\mathbf{H2^{\prime}}$, \textup{\textbf{H3}}, $\mathbf{H4^{\prime}}$, \textup{\textbf{H5}}, \textup{\textbf{H6}}, and \textup{\textbf{H7}} are satisfied. If $u_{0}\in H^{m}$ and $f\in\mathcal{C}(0,T;H^{m-1})$, the Cauchy problem \eqref{eq:41}-\eqref{eq:42} is well-posed in the space $\mathcal{P}_{m}(T)$ and the solution $u\in\mathcal{P}_{m}(T)$ belongs to the space $\mathcal{C}([0,T];H^{m})$ for any $1\leq m\leq s$.
\end{theo}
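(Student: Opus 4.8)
\emph{Proof plan.} The plan is to combine three ingredients already at hand: the a priori estimate \eqref{eq:441} of Theorem \ref{parabolicestimate} (which is available here because $\mathbf{H2^{\prime}}$ and $\mathbf{H4^{\prime}}$ imply \textbf{H2} and \textbf{H4}); the evolution semigroup existence statement obtained just above, by which Kato's theorem applied with $X=L^{2}$, $Y=H^{2}$ yields a solution $u\in\mathcal{C}([0,T];H^{2})\cap\mathcal{C}^{1}([0,T];L^{2})$ of \eqref{eq:41}--\eqref{eq:42} whenever the data are smooth enough; and the density of $\mathcal{C}_{0}^{\infty}(\mathbb{R}^{d})$ in the Sobolev spaces. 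Uniqueness and continuous dependence are then immediate from \eqref{eq:441}: if $u,\widetilde u\in\mathcal{P}_{m}(T)$ solve \eqref{eq:41}--\eqref{eq:42} with data $(u_{0},f)$ and $(\widetilde u_{0},\widetilde f)$, the difference $w=u-\widetilde u\in\mathcal{P}_{m}(T)$ solves \eqref{eq:41} with data $(u_{0}-\widetilde u_{0},f-\widetilde f)$, so Theorem \ref{parabolicestimate} gives
\[
\|w(t)\|_{m}^{2}+\int_{0}^{t}\!\bigl(\|w\|_{m+1}^{2}+\|w_{t}\|_{m-1}^{2}\bigr)d\tau\le\Bigl(\|u_{0}-\widetilde u_{0}\|_{m}^{2}+\int_{0}^{t}\!\|f-\widetilde f\|_{m-1}^{2}d\tau\Bigr)\Psi_{0}^{2}(t).
\]
Since $\|\nabla w\|_{m}\lesssim\|w\|_{m+1}$ and $\Psi_{0}$ is nondecreasing, the left-hand side dominates $\|w\|_{\mathcal{P}_{m}(T)}^{2}$; taking $(u_{0},f)=(\widetilde u_{0},\widetilde f)$ forces $w\equiv0$, and in general the inequality is the Lipschitz dependence of the solution on $(u_{0},f)$ in the norm of $\mathcal{P}_{m}(T)$.

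It then remains to prove existence in $\mathcal{P}_{m}(T)$. First I would treat smooth data $u_{0}\in H^{s}$, $f\in\mathcal{C}([0,T];H^{s-1})$ and the Kato solution $u\in\mathcal{C}([0,T];H^{2})\cap\mathcal{C}^{1}([0,T];L^{2})$, upgrading $u$ to $\mathcal{P}_{k}(T)$ for every $1\le k\le s$ by induction on $k$; the content of each step is the parabolic gain $\nabla u\in L^{2}(0,T;H^{k})$ and $u_{t}\in L^{2}(0,T;H^{k-1})$. For $k=1$ this is already contained in $\mathcal{C}([0,T];H^{2})\cap\mathcal{C}^{1}([0,T];L^{2})$. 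For the inductive step I would fix $h\ne0$ and a coordinate direction and apply to \eqref{eq:41} an iterated spatial difference quotient $\delta_{h}^{k}$ of order $k$: since $\delta_{h}^{k}$ is, for fixed $h$, bounded on $L^{2}$ and on every $H^{r}$, the function $\delta_{h}^{k}u$ retains the regularity $\mathcal{C}([0,T];H^{2})\cap\mathcal{C}^{1}([0,T];L^{2})$ of $u$, so the energy identity obtained by pairing the quotiented equation with $2\delta_{h}^{k}u$ in $L^{2}$ is legitimate. Running the computation behind Theorem \ref{parabolicestimate}, the Legendre-Hadamard condition in \textbf{H1} produces a coercive term $\gtrsim\eta\|\nabla\delta_{h}^{k}u\|^{2}$, the commutators between $\delta_{h}^{k}$ and the coefficients are bounded uniformly in $h$ through the discrete analogues of \eqref{eq:37} and the regularity \textbf{H2}--\textbf{H7}, and the forcing contributes only $\|f\|_{m-1}$ after moving one difference quotient onto $u$ by summation by parts; Gronwall's lemma and $h\to0$ then give \eqref{eq:441} at level $k$ with honest derivatives, whence $u\in\mathcal{P}_{k}(T)$, and reading off $u_{t}=(A^{0})^{-1}\bigl(B^{ij}\partial_{i}\partial_{j}u-A^{i}\partial_{i}u-Du+f\bigr)$ together with the module estimate \eqref{eq:32} places $u_{t}$ in $L^{2}(0,T;H^{k-1})$.

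For arbitrary data $u_{0}\in H^{m}$, $f\in\mathcal{C}([0,T];H^{m-1})$, I would choose $u_{0}^{n}\to u_{0}$ in $H^{m}$ with $u_{0}^{n}\in H^{s}$, and $f^{n}\to f$ in $\mathcal{C}([0,T];H^{m-1})$ with $f^{n}\in\mathcal{C}([0,T];H^{s-1})$ (mollify in $x$, uniformly in $t$). The solutions $u^{n}\in\mathcal{P}_{s}(T)\subseteq\mathcal{P}_{m}(T)$ constructed above obey, by the continuous-dependence estimate, $\|u^{n}-u^{n'}\|_{\mathcal{P}_{m}(T)}\le C\bigl(\|u_{0}^{n}-u_{0}^{n'}\|_{m}+\|f^{n}-f^{n'}\|_{L^{2}(0,T;H^{m-1})}\bigr)$, so $\{u^{n}\}$ is Cauchy in the Banach space $\mathcal{P}_{m}(T)$; its limit $u\in\mathcal{P}_{m}(T)$ solves \eqref{eq:41}--\eqref{eq:42} with data $(u_{0},f)$ on passing to the limit in the weak formulation, and together with the first paragraph this gives well-posedness in $\mathcal{P}_{m}(T)$. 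Finally, $u\in\mathcal{P}_{m}(T)$ yields $u\in L^{2}(0,T;H^{m+1})$ and $u_{t}\in L^{2}(0,T;H^{m-1})$, so the Lions-Magenes interpolation theorem for vector-valued functions, with $[H^{m+1},H^{m-1}]_{1/2}=H^{m}$, gives $u\in\mathcal{C}([0,T];H^{m})$ for every $1\le m\le s$; alternatively, weak continuity into $H^{m}$ together with continuity of $t\mapsto\|u(t)\|_{m}$ — read off from \eqref{eq:441} run from an arbitrary initial time — gives the same via the standard Hilbert-space argument.

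The step I expect to be the main obstacle is the regularity bootstrap: Theorem \ref{parabolicestimate} presupposes $u\in\mathcal{P}_{m}(T)$, so it cannot be applied as it stands to the merely $H^{2}$-valued solution delivered by Kato's theorem, and this circularity must be broken. The fixed-$h$ difference-quotient device above does so while keeping every constant independent of $h$, which is precisely where \textbf{H1}--\textbf{H7} and the commutator bounds \eqref{eq:37} are used in full (a Galerkin scheme, or maximal $L^{2}$-regularity for the parabolic equation, would serve equally well). Everything else reduces to \eqref{eq:441} and density.
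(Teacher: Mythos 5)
Your plan is sound and reaches the same conclusion, but its middle step is genuinely different from the paper's. The paper also starts from the Kato solution $u\in\mathcal{C}([0,T];H^{2})\cap\mathcal{C}^{1}([0,T];L^{2})$, but then it mollifies the solution itself: $u^{\epsilon}$ solves \eqref{eq:444} with forcing $f^{\epsilon}+F^{\epsilon}$, the a priori estimate \eqref{eq:441} applied to $u^{\epsilon_{1}}-u^{\epsilon_{2}}$ shows that $\{u^{\epsilon}\}$ is Cauchy in $\mathcal{P}_{m}(T)$, and continuity into $H^{m}$ is inherited as a uniform limit of continuous approximants; there is no separate treatment of smooth versus rough data and no interpolation theorem. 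You instead regularize the data, upgrade the $H^{2}$-valued Kato solution to $\mathcal{P}_{k}(T)$ for all $k\leq s$ by a difference-quotient bootstrap, pass to general $(u_{0},f)$ via the continuous-dependence estimate, and get $\mathcal{C}([0,T];H^{m})$ from the Lions--Magenes continuity theorem (or weak continuity plus continuity of the norm). What your route buys is exactly the point you flag: the circularity of applying the $\mathcal{P}_{m}$-level estimate to a solution only known to be $H^{2}$-valued is broken explicitly, whereas the paper's claim that $\bar f^{\epsilon_{1}}-\bar f^{\epsilon_{2}}\to0$ in $L^{2}(0,T;H^{m-1})$ for $m>2$ tacitly presupposes higher regularity of $u$ of the kind your bootstrap supplies; the price is the longer, and here only sketched, difference-quotient computation, where the uniform-in-$h$ commutator bounds (the discrete analogues of \eqref{eq:37} under \textbf{H1}--\textbf{H7}) must be verified and Garding's inequality is applied to $\delta_{h}^{k}u$ rather than to $\partial_{x}^{\alpha}u$. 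Your uniqueness and continuous-dependence paragraph, and the final approximation of the data, coincide with the paper's use of \eqref{eq:441}.
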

\begin{proof}
As consequence of \eqref{eq:445}, $u^{\epsilon}\in\mathcal{P}_{m}(T)\cap\mathcal{C}([0,T];H^{m})$, and since $u^{\epsilon}$ satisfies \eqref{eq:444} with $u^{\epsilon}(x,0)=u_{0}^{\epsilon}$, for every $\epsilon_{1},\epsilon_{2}>0$ we have the estimate,
\begin{equation}
\|u^{\epsilon_{1}}-u^{\epsilon_{2}}\|_{\mathcal{P}_{m}(T)}^{2}\leq\Psi_{0}^{2}(T)\left(\|u_{0}^{\epsilon_{1}}-u_{0}^{\epsilon_{2}}\|_{m}^{2}+\int_{0}^{T}\|\bar{f}^{\epsilon_{1}}-\bar{f}^{\epsilon_{2}}\|_{m-1}^{2}dt\right),\nonumber
\end{equation}
where $\bar{f}^{\epsilon}$ comprises all the terms in the right hand side of \eqref{eq:444}. Since $\bar{f}^{\epsilon_{1}}-\bar{f}^{\epsilon_{2}}\rightarrow 0$ in $L^{2}(0,T;H^{m-1})$ and $u_{0}^{\epsilon_{1}}-u_{0}^{\epsilon_{2}}\rightarrow 0$ in $H^{m}$ if $\epsilon_{1},\epsilon_{2}\rightarrow 0$, we conclude that $\left\lbrace u^{\epsilon}\right\rbrace_{\epsilon>0}$ is a Cauchy sequence in $\mathcal{P}_{m}(T)$, and thus, there exists $\bar{u}\in\mathcal{P}_{m}(T)$ such that 
\begin{equation}
u^{\epsilon}\rightarrow\bar{u}~\mbox{in}~\mathcal{P}_{m}(T).\label{eq:446}
\end{equation}
Now, since $u^{\epsilon}\in\mathcal{C}([0,T];H^{m})$, the convergence in \eqref{eq:446} implies convergence in $\mathcal{C}([0,T];H^{m})$ for all $m\geq 2$ and so we conclude that $u=\bar{u}\in\mathcal{C}([0,T];H^{m})$.
\end{proof}
\subsection{Linear system with hyperbolic-parabolic coupling}
Let us consider three sets of variables, all of them functions of $(x,t)\in Q_{T}$ for $T>0$, $u\in\mathbb{R}^{n}$, $v\in\mathbb{R}^{k}$, $w\in\mathbb{R}^{p}$, with $n+k+p=N$, and the following system of partial differential equations,
\begin{eqnarray}
A_{1}^{0}u_{t}+A^{i}_{11}\partial_{i}u+A_{12}^{i}\partial_{i}v&=&f_{1}(x,t), \label{eq:51}\\
A_{2}^{0}v_{t}+A^{i}_{21}\partial_{i}u+A_{22}^{i}\partial_{i}v+A^{i}_{23}\partial_{i}w-B_{0}^{ij}\partial_{i}\partial_{j}v&=&f_{2}(x,t), \label{eq:52}\\
A_{3}^{0}w_{t}+A^{i}_{32}\partial_{i}v+A_{33}^{i}\partial_{i}w+D_{0}w&=&f_{3}(x,t), \label{eq:53}
\end{eqnarray}
where repeated index notation has been used in the space derivatives $\partial_{i}\cdot$ and $\partial_{i}\partial_{j}\cdot$, and where each capital letter represents a real matrix function of $(x,t)\in\mathbb{R}^{d}\times[0,T]$ such that
\begin{eqnarray}
A_{0}^{1}(x,t)\in\mathbb{M}_{n\times n}&,&A_{11}^{i}(x,t)\in\mathbb{M}_{n\times n}~\forall 1\leq i\leq d,\nonumber\\
A_{12}^{i}(x,t)\in\mathbb{M}_{n\times k}~\forall 1\leq i\leq d&,&A_{21}^{i}(x,t)\in\mathbb{M}_{k\times n}~\forall 1\leq i\leq d,\nonumber\\
A_{22}^{i}(x,t)\in\mathbb{M}_{k\times k}~\forall 1\leq i\leq d&,&A_{23}^{i}(x,t)\in\mathbb{M}_{k\times p}~\forall 1\leq i\leq d,\nonumber\\
A_{32}^{i}(x,t)\in\mathbb{M}_{p\times k}~\forall 1\leq i\leq d&,&A_{33}^{i}(x,t)\in\mathbb{M}_{p\times p}~\forall 1\leq i\leq d,\nonumber\\
A_{2}^{0}(x,t)\in\mathbb{M}_{k\times k}&,&A_{0}^{3}(x,t)\in\mathbb{M}_{p\times p}\nonumber\\
B_{0}^{ij}(x,t)\in\mathbb{M}_{k\times k}&,&D_{0}(x,t)\in\mathbb{M}_{p\times p}.\nonumber
\end{eqnarray}
The mapping $(x,t)\mapsto F(x,t):=\left(f_{1}(x,t), f_{2}(x,t),f_{3}(x,t)\right)^{\top}$ is assumed to be given and for each $(x,t)\in Q_{T}$, $f_{1}(x,t)\in\mathbb{R}^{n}$, $f_{2}(x,t)\in\mathbb{R}^{k}$, $f_{3}(x,t)\in\mathbb{R}^{p}$.\\
We make the following assumptions: 
\begin{itemize}
	\item [\textbf{I}] $A_{11}^{i}$ and $A_{33}^{i}$ are symmetric matrices for all $1\leq i\leq d$.
\end{itemize}
Observe that equations \eqref{eq:51}, \eqref{eq:52}, \eqref{eq:53} can be written in the form 
\begin{equation}
A^{0}U_{t}+A^{i}\partial_{i}U-B^{ij}\partial_{i}\partial_{j}U+Du=F \label{eq:54}
\end{equation}
where 
\begin{equation}
A^{0}=\left(\begin{array}{ccc}
	A_{1}^{0}&0&0\\
	0&A_{2}^{0}&0\\
	0&0&A_{3}^{0}\\
\end{array}\right)\in\mathbb{M}_{N\times N},
\label{eq:55}
\end{equation}
\begin{equation}
A^{i}=\left(\begin{array}{ccc}
	A_{11}^{i}&A_{12}^{i}&0\\
	A_{21}^{i}&A_{22}^{i}&A_{23}^{i}\\
	0&A_{32}^{i}&A_{33}^{i}\\
\end{array}\right)\in\mathbb{M}_{N\times N},
\label{eq:56}
\end{equation}
\begin{equation}
B^{ij}=\left(\begin{array}{ccc}
	0&0&0\\
	0&B_{0}^{ij}&0\\
	0&0&0\\
\end{array}\right)\in\mathbb{M}_{N\times N},
\label{eq:57}
\end{equation}
\begin{equation}
D=\left(\begin{array}{ccc}
	0&0&0\\
	0&0&0\\
	0&0&D_{0}\\
\end{array}\right)\in\mathbb{M}_{N\times N}.
\label{eq:58}
\end{equation}
\begin{itemize}
	\item [\textbf{II}] Every non-zero sub-block of \eqref{eq:55}-\eqref{eq:58} satisfy assumptions \textbf{H1}-\textbf{H6}. In particular, there are two positive constants $a_{0}$ and $a_{1}$ such that for all $v\in\mathbb{R}^{n_{i}}$
	\[a_{0}|v|^{2}\leq\left(A^{0}_{i}(x,t)v,v\right)_{\mathbb{R}^{n_{i}}}\leq a_{1}|v|^{2}~\forall~(x,t)\in Q_{T},\]
and all $n_{i}=n,k,p$. Also, there is a positive constant $\eta>0$ such that,
	\[\left(B^{ij}_{0}(x,t)\xi_{i}\xi_{j}v,v\right)_{\mathbb{R}^{k}}\geq\eta|\xi|^{2}|v|^{2},\]
	for all $\xi=(\xi_{1},..,\xi_{d})\in\mathbb{R}^{d}$, $v\in\mathbb{R}^{k}$ and $(x,t)\in\mathbb{R}^{d}\times[0,T]$.
\end{itemize}
The regularity stated in \textbf{II} is enough to deduce the energy estimates of \eqref{eq:51}-\eqref{eq:53}. Whenever we require to assure the existence of more regular solutions we use:
\begin{itemize}
	\item [\textbf{II}$\mathbf{^{\prime}}$]  Every non-zero sub-block of \eqref{eq:55}-\eqref{eq:58} satisfies assumptions \textup{\textbf{H1}}, $\mathbf{H2^{\prime}}$, \textup{\textbf{H3}}, $\mathbf{H4^{\prime}}$, \textup{\textbf{H5}}, \textup{\textbf{H6}} and \textup{\textbf{H7}}.
\end{itemize}
\begin{rem}
Observe that, contrary to the last section, the system \eqref{eq:51}-\eqref{eq:53} is not fully strongly parabolic. Moreover, assumptions \textbf{I}-\textbf{II} are not enough to assure that the system without diffusion, namely,
\[A^{0}U_{t}+A^{i}\partial_{i}U+DU=0,\]
is hyperbolic \textup{(}see \cite{angeles2021nonhyperbolicity}\textup{)}.
\end{rem}
We consider the Cauchy problem for system \eqref{eq:51}-\eqref{eq:53} with initial condition 
\begin{equation}
\left(u(x,0),v(x,0),w(x,0)\right)^{\top}=\left(u_{0},v_{0},w_{0}\right)^{\top}(x)=U_{0}(x)\quad x\in\mathbb{R}^{d},\label{eq:59}
\end{equation}
by assuming: 
\begin{itemize}
	\item [\textbf{III}] $f_{1},f_{3}\in\mathcal{C}([0,T];H^{m-1})\cap L^{2}(0,T; H^{m})$ and $f_{2}\in\mathcal{C}([0,T];H^{m-1})$.
\end{itemize}
We proceed by a vanishing-viscosity approach. First, for $\delta>0$, we introduce the following parabolic regularization,
\begin{equation}
A^{0}U_{t}^{\delta}+A^{i}\partial_{i}U^{\delta}+DU^{\delta}-B^{ij}\partial_{i}\partial_{j}U^{\delta}=f+\delta\Lambda\Delta U^{\delta},\label{eq:512}
\end{equation} 
where $\Lambda$ is a constant matrix of order $N\times N$ given as 
\begin{equation}
\Lambda=\left(\begin{array}{ccc}
	\mathbb{I}_{n\times n}&0&0\\
	0&0&0\\
	0&0&\mathbb{I}_{p\times p}
\end{array}\right)
\label{eq:513}
\end{equation}
and $\mathbb{I}_{n\times n}$ and $\mathbb{I}_{p\times p}$ denote the identity matrices in $\mathbb{M}_{n\times n}$ and $\mathbb{M}_{p\times p}$ respectively. In terms of its components, system \eqref{eq:512} has the form, 
\begin{eqnarray}
A_{1}^{0}u_{t}^{\delta}+A^{i}_{11}\partial_{i}u^{\delta}+A_{12}^{i}\partial_{i}v^{\delta}-\delta\Delta u^{\delta}&=&f_{1}(x,t), \label{eq:514}\\
A_{2}^{0}v_{t}^{\delta}+A^{i}_{21}\partial_{i}u^{\delta}+A_{22}^{i}\partial_{i}v^{\delta}+A^{i}_{23}\partial_{i}w^{\delta}-B_{0}^{ij}\partial_{i}\partial_{j}v^{\delta}&=&f_{2}(x,t), \label{eq:515}\\
A_{3}^{0}w_{t}^{\delta}+A^{i}_{32}\partial_{i}v^{\delta}+A_{33}^{i}\partial_{i}w^{\delta}+D_{0}w^{\delta}-\delta\Delta w^{\delta}&=&f_{3}(x,t). \label{eq:516}
\end{eqnarray}
Second, by taking into account the observations made in Section \ref{couvsuncou}, we provide a suitable energy estimate, independent of $\delta$, in order to conclude the existence of a solution for \eqref{eq:51}-\eqref{eq:53} as a consequence of a compactness argument. The proof of the following result is left in Appendix B.
\begin{theo}
\label{energydec}
 Let $s\in\mathbb{N}$ such that $s\geq s_{0}+1$, $s_{0}=\left[\frac{d}{2}\right]+1$ and $1\leq m\leq s$. Assume the coefficients \eqref{eq:55}-\eqref{eq:58} satisfy \textup{\textbf{I}}-\textup{\textbf{II}}, $f_{1},f_{3}\in L^{2}(0,T; H^{m})$, $f_{2}\in L^{2}(0,T;H^{m-1})$ and $U_{0}\in H^{m}$. Let $U=(u,v,w)^{\top}$, defined on $Q_{T}$, be such that
\begin{equation}
	\label{eq:lowreg}
	\begin{aligned}
u,w&\in L^{\infty}(0,T; H^{m}),\\
v&\in L^{\infty}(0,T;H^{m})\cap L^{2}(0,T; H^{m+1}),\\
u_{t},v_{t},w_{t}&\in L^{2}(0,T;H^{m-1}).
\end{aligned}
\end{equation}
Set $\mathcal{F}_{m}^{2}(f_{1},f_{2},f_{3}):=\|f_{1}\|_{m}^{2}+\|f_{2}\|_{m-1}^{2}+\|f_{3}\|_{m}^{2}$.
If $U$ satisfies the Cauchy problem for \eqref{eq:51}-\eqref{eq:53} with initial condition \eqref{eq:59} then, there is a positive constant $C_{1}=C_{1}(g,\eta,a_{0})$ such that,
\begin{align}
\|\left(u(t),v(t),w(t)\right)\|_{m}^{2}+&\int_{0}^{t}\|\left(u_{t}(\tau),v_{t}(\tau),w_{t}(\tau)\right)\|_{m-1}^{2}d\tau+\nonumber\\
+&\int_{0}^{t}\left(\|u(\tau)\|_{m}^{2}+\|v(\tau)\|_{m+1}^{2}+\|w(\tau)\|_{m}^{2}\right)d\tau\leq K_{0}^{2}(t)\Phi_{0}^{2}(t),\label{eq:543}
\end{align}
for all $t\in[0,T]$, where 
\begin{equation}
K_{0}^{2}(t):=\|u_{0}\|_{m}^{2}+\|v_{0}\|_{m}^{2}+\|w_{0}\|_{m}^{2}+\int_{0}^{t}\mathcal{F}_{m}^{2}(f_{1}(\tau),f_{2}(\tau),f_{3}(\tau))d\tau
\label{eq:537}
\end{equation}
and
\begin{equation}
\Phi_{0}^{2}(t):= C_{1}e^{C_{1}\int_{0}^{t}(\mu_{0}(\tau)+\mu_{1}(\tau))dt}.
\label{eq:538}
\end{equation}
Moreover, for any $0<\delta<1$, the solution $U^{\delta}\in\mathcal{P}_{m}(T)$ of the Cauchy problem for \eqref{eq:514}-\eqref{eq:516}, with initial condition $U^{\delta}(0)=U_{0}$, satisfies the energy estimate given in \eqref{eq:543}, \eqref{eq:537} and \eqref{eq:538} for all $1\leq m\leq s$.
\end{theo}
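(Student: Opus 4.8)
The plan is to establish \eqref{eq:543} by the energy method, differentiating the system in $x$ up to order $m$ and using the parabolic dissipation furnished by $B_0^{ij}$ to absorb every coupling term. A single computation will do double duty: carried out for a solution $U$ of \eqref{eq:51}--\eqref{eq:53} with the regularity \eqref{eq:lowreg} it gives the first assertion, and carried out for $U^\delta$ solving \eqref{eq:514}--\eqref{eq:516} the two extra terms $-\delta\Delta u^\delta$, $-\delta\Delta w^\delta$ only contribute the non-negative quantity $2\delta\|\nabla u^\delta\|_m^2+2\delta\|\nabla w^\delta\|_m^2$, which is simply discarded, so the resulting bound is uniform in $\delta\in(0,1)$. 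Two ingredients will be used repeatedly: the algebra, commutator and chain-rule estimates \eqref{eq:32}--\eqref{eq:37}, which bound any coefficient factor by $\mu_0$ or $\mu_1$; and a G\aa rding inequality for $-B_0^{ij}\partial_i\partial_j$ --- its symbol being symmetric, uniformly continuous in $x$ (\textbf{H6}) and Legendre--Hadamard elliptic with constant $\eta$ (\textbf{H1}), there are $\eta_0>0$ and $C>0$ with $\sum_{|\alpha|\le m}\langle B_0^{ij}\partial_j\partial_x^\alpha v,\partial_i\partial_x^\alpha v\rangle\ge\eta_0\|v\|_{m+1}^2-C\|v\|_m^2$.

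For each multi-index $\alpha$ with $|\alpha|\le m$ (after a spatial mollification of $U$ to legitimise the manipulations, then passing to the limit) I would apply $\partial_x^\alpha$ to the three equations and pair the results in $L^2(\mathbb{R}^d)$ with $2\partial_x^\alpha u$, $2\partial_x^\alpha v$, $2\partial_x^\alpha w$ respectively. The symmetry of $A_j^0$ converts the time-derivative terms into $\tfrac{d}{dt}\langle A_j^0\partial_x^\alpha\cdot,\partial_x^\alpha\cdot\rangle$ plus a term controlled by $\mu_1$ via \textbf{H3} and \textbf{H5}. The symmetry of $A_{11}^i$ and $A_{33}^i$ (assumption \textbf{I}) lets me integrate by parts in the principal hyperbolic terms so that only $\mu_0$-controlled lower-order and commutator contributions survive, the latter estimated by \eqref{eq:37}. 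The diffusion term, after integrating by parts twice, commuting $B_0^{ij}$ past $\partial_x^\alpha$ and summing in $\alpha$, yields $-2\eta_0\|v\|_{m+1}^2+C\mu_0\|v\|_m^2$ together with commutator terms which a weighted Cauchy inequality renders absorbable into a small multiple of $\|v\|_{m+1}^2$.

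The heart of the proof is the coupling. In the $u$- and $w$-equations the cross terms $-2\langle\partial_x^\alpha(A_{12}^i\partial_i v),\partial_x^\alpha u\rangle$ and $-2\langle\partial_x^\alpha(A_{32}^i\partial_i v),\partial_x^\alpha w\rangle$ involve only $D^{m+1}v$, which is admissible because $v\in L^2(0,T;H^{m+1})$; by the product estimate \eqref{eq:32} and a weighted Cauchy inequality they are bounded by $\varepsilon\|v\|_{m+1}^2+C_\varepsilon\mu_0\|u\|_m^2$, respectively $\varepsilon\|v\|_{m+1}^2+C_\varepsilon\mu_0\|w\|_m^2$. In the $v$-equation, by contrast, $-2\langle\partial_x^\alpha(A_{21}^i\partial_i u),\partial_x^\alpha v\rangle$ and $-2\langle\partial_x^\alpha(A_{23}^i\partial_i w),\partial_x^\alpha v\rangle$ would demand $D^{m+1}u$ and $D^{m+1}w$, which are \emph{not} available; the remedy --- precisely the mechanism discussed in Section \ref{couvsuncou} --- is to write $A_{21}^i\partial_i u=\partial_i(A_{21}^i u)-(\partial_i A_{21}^i)u$ and integrate by parts so that the extra derivative falls on $\partial_x^\alpha v$, giving $2\langle\partial_x^\alpha(A_{21}^i u),\partial_i\partial_x^\alpha v\rangle+2\langle\partial_x^\alpha((\partial_i A_{21}^i)u),\partial_x^\alpha v\rangle\le\varepsilon\|v\|_{m+1}^2+C_\varepsilon\mu_0(\|u\|_m^2+\|v\|_m^2)$, and likewise with $w$. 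The self-term $-2\langle\partial_x^\alpha(A_{22}^i\partial_i v),\partial_x^\alpha v\rangle$ and the source pairing $2\langle\partial_x^\alpha f_2,\partial_x^\alpha v\rangle$ are handled by the same device (for $|\alpha|=m$, transferring one derivative onto $v$), which is exactly why $f_2$ enters \eqref{eq:543} only through $\|f_2\|_{m-1}$ while the hyperbolic sources require the full $\|f_1\|_m$ and $\|f_3\|_m$.

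Summing over $|\alpha|\le m$ and over the three equations, and choosing $\varepsilon$ so small that the accumulated coefficient of $\|v\|_{m+1}^2$ on the right is smaller than $\eta_0$, I absorb it into the dissipation and obtain, with $E_m^2:=\sum_{|\alpha|\le m}\big(\langle A_1^0\partial_x^\alpha u,\partial_x^\alpha u\rangle+\langle A_2^0\partial_x^\alpha v,\partial_x^\alpha v\rangle+\langle A_3^0\partial_x^\alpha w,\partial_x^\alpha w\rangle\big)\simeq\|(u,v,w)\|_m^2$ (by \textbf{H1}),
\[
\frac{d}{dt}E_m^2+\eta_1\|v\|_{m+1}^2\le C_1(\mu_0(t)+\mu_1(t))E_m^2+C_1\mathcal{F}_m^2(f_1,f_2,f_3).
\]
Gronwall's inequality then gives $\|(u,v,w)(t)\|_m^2+\int_0^t\|v(\tau)\|_{m+1}^2\,d\tau\le C K_0^2(t)\Phi_0^2(t)$; since $\mu_0\ge1$ one has $\int_0^t\Phi_0^2(\tau)\,d\tau\le C_1^{-1}\Phi_0^2(t)$, so $\int_0^t(\|u\|_m^2+\|w\|_m^2)\,d\tau\le\int_0^tE_m^2(\tau)\,d\tau\le K_0^2(t)\int_0^t\Phi_0^2(\tau)\,d\tau$ is already within the asserted bound and may be added on the left. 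Finally, solving the equations for $u_t=(A_1^0)^{-1}(f_1-A_{11}^i\partial_i u-A_{12}^i\partial_i v)$ and analogously for $v_t$ and $w_t$, and using \textbf{H2} with the product estimates, one gets $\|u_t\|_{m-1}^2+\|v_t\|_{m-1}^2+\|w_t\|_{m-1}^2\le C\mu_0(\|u\|_m^2+\|v\|_{m+1}^2+\|w\|_m^2)+C\mathcal{F}_m^2$; integrating in time and inserting the bounds just obtained puts $\int_0^t\|(u_t,v_t,w_t)\|_{m-1}^2$ on the left as well, completing \eqref{eq:543} for $U$. For $U^\delta$ the identical argument applies to \eqref{eq:514}--\eqref{eq:516}: the new terms on the left are $-2\delta\|\nabla\partial_x^\alpha u^\delta\|^2$ and its $w^\delta$-counterpart, discarded as non-negative, while $\delta\Delta u^\delta$ in $u_t^\delta$ contributes $\delta^2\int_0^t\|u^\delta\|_{m+1}^2\le\tfrac{\delta}{2}\,C K_0^2\Phi_0^2$, bounded uniformly for $\delta\in(0,1)$; hence $U^\delta$ obeys \eqref{eq:543}, \eqref{eq:537} and \eqref{eq:538} with a $\delta$-independent constant. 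The main obstacle I anticipate is exactly the cross terms $-2\langle\partial_x^\alpha(A_{21}^i\partial_i u),\partial_x^\alpha v\rangle$ and $-2\langle\partial_x^\alpha(A_{23}^i\partial_i w),\partial_x^\alpha v\rangle$: one must avoid the naive estimate, which would require unavailable $H^{m+1}$ control of the hyperbolic components, and instead spend the extra derivative on $v$ against the dissipation, keeping the $\varepsilon$-bookkeeping tight enough to stay below the G\aa rding constant $\eta_0$; proving that G\aa rding inequality for the merely Legendre--Hadamard elliptic symbol is the remaining technical point.
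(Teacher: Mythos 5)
Your proposal is correct and follows essentially the same route as the paper's Appendix B: differentiate up to order $m$, exploit the symmetry of $A^0_j$, $A^i_{11}$, $A^i_{33}$ together with the commutator estimates and G\aa rding's inequality, estimate the cross terms by a weighted Cauchy inequality so that the only occurrences of $D^{m+1}v$ are absorbed by the parabolic dissipation (which is exactly why $f_1,f_3$ enter through $\|\cdot\|_m$ and $f_2$ only through $\|\cdot\|_{m-1}$), then apply Gronwall and keep the $\delta$-terms on the dissipative side to get a bound uniform in $\delta\in(0,1)$. The only deviations are organizational: for the $v$-equation couplings the paper folds $A^i_{21}\partial_i u+A^i_{23}\partial_i w$ into the source and reuses the parabolic estimate of Appendix A rather than integrating by parts directly as you do (same effect), and for $\int_0^t\|(u_t,v_t,w_t)\|_{m-1}^2$ the paper repeats the pairing with $2\partial_x^\alpha u_t$ from Appendix A, whereas your shortcut of solving the equations for the time derivatives and using the already-established $L^2_tH^{m+1}$ control of $v$ (and the retained $\delta$-dissipation for $u^\delta,w^\delta$) is also valid, at the harmless cost of enlarging the constant $C_1$.
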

\begin{theo}
\label{wplinearcoupling}
Let $s\in\mathbb{N}$ such that $s\geq s_{0}+1$, $s_{0}=\left[\frac{d}{2}\right]+1$ and $1\leq m\leq s$. Let \textup{\textbf{I}}, $\mathbf{II^{\prime}}$ and \textup{\textbf{III}} be satisfied and assume that $U_{0}\in H^{m}$. Then, there is a unique solution $U=(u,v,w)^{\top}$ to the Cauchy problem for the equations \eqref{eq:51}-\eqref{eq:53} with initial condition \eqref{eq:59}, such that 
\begin{eqnarray}
u,v,w&\in&\mathcal{C}([0,T];H^{m}),~1\leq m\leq s,\label{eq:567}\\
u_{t},v_{t},w_{t}&\in& L^{2}(0,T;H^{m-1}),~1\leq m\leq s,\label{eq:568}\\
u_{t},w_{t}&\in&\mathcal{C}([0,T];H^{m-1}),~1\leq m\leq s,\label{eq:569}\\
v_{t}&\in&\mathcal{C}([0,T];H^{m-2}),~2\leq m\leq s,\label{eq:570}\\
v&\in&L^{2}(0,T;H^{m+1}),~1\leq m\leq s.\label{eq:571}
\end{eqnarray}
Moreover, by theorem \ref{energydec}, $U$ satisfies the energy estimate defined in \eqref{eq:543}-\eqref{eq:538}.
\end{theo}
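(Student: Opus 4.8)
The plan is to produce the solution by a vanishing-viscosity limit, combining the linear parabolic well-posedness of Theorem~\ref{parabolicwp} at the regularized level with the $\delta$-uniform estimate of Theorem~\ref{energydec}, and then to upgrade the time regularity by a bootstrap through the equations. \emph{Step 1 (the regularized problems).} For $\delta\in(0,1)$ I would read \eqref{eq:514}--\eqref{eq:516} as the single second-order system $A^{0}U^{\delta}_{t}+A^{i}\partial_{i}U^{\delta}+DU^{\delta}-\widetilde{B}^{ij}\partial_{i}\partial_{j}U^{\delta}=F$, whose second-order part is obtained from $B^{ij}\partial_{i}\partial_{j}$ by adding $\delta\Lambda\Delta$; owing to the shape of $\Lambda$ this produces no second-order coupling among $u,v,w$, and the resulting principal symbol is the block-diagonal matrix $\mathrm{diag}\big(\delta|\xi|^{2}\mathbb{I}_{n\times n},\,B_{0}^{ij}\xi_{i}\xi_{j},\,\delta|\xi|^{2}\mathbb{I}_{p\times p}\big)$, which is symmetric and Legendre--Hadamard elliptic with constant $\min\{\delta,\eta\}>0$. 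Since $\delta\Lambda$ is a constant matrix, all of \textup{\textbf{H1}}, $\mathbf{H2^{\prime}}$, \textup{\textbf{H3}}, $\mathbf{H4^{\prime}}$, \textup{\textbf{H5}}, \textup{\textbf{H6}}, \textup{\textbf{H7}} for this system are inherited from $\mathbf{II^{\prime}}$; with $F=(f_{1},f_{2},f_{3})\in\mathcal{C}([0,T];H^{m-1})$ and $U_{0}\in H^{m}$ (assumption \textup{\textbf{III}}), Theorem~\ref{parabolicwp} yields a unique $U^{\delta}=(u^{\delta},v^{\delta},w^{\delta})\in\mathcal{P}_{m}(T)\cap\mathcal{C}([0,T];H^{m})$ solving \eqref{eq:514}--\eqref{eq:516} with $U^{\delta}(0)=U_{0}$, for every $1\le m\le s$.

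\emph{Step 2 (uniform bounds and the vanishing-viscosity limit).} By the last assertion of Theorem~\ref{energydec}, each $U^{\delta}$ satisfies \eqref{eq:543}--\eqref{eq:538}; the right-hand side there involves only $U_{0}$, $f_{1},f_{2},f_{3}$, $\mu_{0},\mu_{1}$ and $C_{1}=C_{1}(g,\eta,a_{0})$, none of which sees $\delta$ (the extra dissipation $\delta\big(\|\nabla u^{\delta}\|_{m}^{2}+\|\nabla w^{\delta}\|_{m}^{2}\big)$ contributed by the regularization enters the estimate with a favorable sign and is discarded), so the bound is uniform in $\delta\in(0,1)$. Consequently $\{u^{\delta}\},\{w^{\delta}\}$ are bounded in $L^{\infty}(0,T;H^{s})$, $\{v^{\delta}\}$ in $L^{\infty}(0,T;H^{s})\cap L^{2}(0,T;H^{s+1})$, and $\{u^{\delta}_{t}\},\{v^{\delta}_{t}\},\{w^{\delta}_{t}\}$ in $L^{2}(0,T;H^{s-1})$. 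By Banach--Alaoglu I extract a sequence $\delta\to0$ along which $u^{\delta},v^{\delta},w^{\delta}$ converge in the weak-$\ast$ topology of $L^{\infty}(0,T;H^{s})$ to limits $u,v,w$, with $v^{\delta}\rightharpoonup v$ in $L^{2}(0,T;H^{s+1})$ and $u^{\delta}_{t},v^{\delta}_{t},w^{\delta}_{t}\rightharpoonup u_{t},v_{t},w_{t}$ in $L^{2}(0,T;H^{s-1})$. Since \eqref{eq:514}--\eqref{eq:516} are \emph{linear} with $\delta$-independent coefficients in $\mathcal{C}([0,T];\widehat{H}^{s})$ — multiplication by which is a bounded, hence weak-to-weak continuous, operator on the pertinent Sobolev spaces by \eqref{eq:32} — and since $\delta\Delta u^{\delta},\delta\Delta w^{\delta}\to0$ in $\mathcal{D}'(Q_{T})$, the limit $U=(u,v,w)$ satisfies \eqref{eq:51}--\eqref{eq:53} a.e.\ on $Q_{T}$; the uniform bounds on $(U^{\delta},U^{\delta}_{t})$ together with $U^{\delta}(0)=U_{0}$ identify $U(0)=U_{0}$ in the standard way. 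In particular $U$ lies in the class \eqref{eq:lowreg} for every $1\le m\le s$, so Theorem~\ref{energydec} applies to $U$ itself and furnishes the energy estimate \eqref{eq:543}--\eqref{eq:538}.

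\emph{Step 3 (time-continuity and uniqueness).} It remains to refine \eqref{eq:lowreg} to \eqref{eq:567}--\eqref{eq:571}. From $v\in L^{2}(0,T;H^{m+1})$ and $v_{t}\in L^{2}(0,T;H^{m-1})$ the classical interpolation lemma for Bochner spaces (\cite{evans}) gives $v\in\mathcal{C}([0,T];H^{m})$, which is \eqref{eq:571} and the $v$-component of \eqref{eq:567}. Next, by assumption \textup{\textbf{I}} equation \eqref{eq:51} presents $u$ as the solution of the \emph{linear symmetric hyperbolic} system $A_{1}^{0}u_{t}+A_{11}^{i}\partial_{i}u=g_{1}$, $u(0)=u_{0}$, with forcing $g_{1}:=f_{1}-A_{12}^{i}\partial_{i}v\in\mathcal{C}([0,T];H^{m-1})\cap L^{2}(0,T;H^{m})\subset L^{1}(0,T;H^{m})$ (using \textup{\textbf{III}}, $v\in\mathcal{C}([0,T];H^{m})\cap L^{2}(0,T;H^{m+1})$ and the product estimate \eqref{eq:32}); the coefficient regularity demanded in $\mathbf{II^{\prime}}$ is exactly what is needed to invoke the standard well-posedness of linear symmetric hyperbolic systems (see, e.g., \cite{katosym}, \cite{maj}), giving $u\in\mathcal{C}([0,T];H^{m})$ and, on reading $u_{t}$ from \eqref{eq:51}, $u_{t}\in\mathcal{C}([0,T];H^{m-1})$. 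The same argument applied to \eqref{eq:53} — where $A_{33}^{i}$ is symmetric and $D_{0}w$ is a lower-order term — yields $w\in\mathcal{C}([0,T];H^{m})$ and $w_{t}\in\mathcal{C}([0,T];H^{m-1})$, so \eqref{eq:567}--\eqref{eq:569} hold. Finally, solving \eqref{eq:52} for $v_{t}$ and using $u,v,w\in\mathcal{C}([0,T];H^{m})$ and $B_{0}^{ij}\in\mathcal{C}([0,T];\widehat{H}^{s})$, the term $(A_{2}^{0})^{-1}B_{0}^{ij}\partial_{i}\partial_{j}v$ of lowest regularity lies in $\mathcal{C}([0,T];H^{m-2})$, whence $v_{t}\in\mathcal{C}([0,T];H^{m-2})$ for $2\le m\le s$, which is \eqref{eq:570}. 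Uniqueness is immediate: the difference of two solutions belongs to the class \eqref{eq:lowreg} with $m=1$ and solves \eqref{eq:51}--\eqref{eq:53} with zero data and zero source, so \eqref{eq:543} with $m=1$ and vanishing right-hand side forces it to vanish identically.

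\emph{Expected main obstacle.} The genuinely delicate point is the passage, in Step 3, from the weak regularity that compactness supplies ($L^{\infty}$ in time into $H^{m}$, with at best weak continuity into $H^{m}$) to strong continuity $\mathcal{C}([0,T];H^{m})$ for the hyperbolic components $u$ and $w$; this is precisely where assumption \textup{\textbf{I}} (symmetry of $A_{11}^{i}$ and $A_{33}^{i}$) is used, through the mollification-based energy argument underlying linear symmetric hyperbolic theory. The natural alternative — showing directly that $\{U^{\delta}\}$ is Cauchy in the energy norm of \eqref{eq:543} — runs into trouble, since subtracting two regularized equations leaves the source $(\delta_{1}-\delta_{2})\Lambda\Delta U^{\delta_{2}}$, which is not controlled uniformly in the norm that Theorem~\ref{energydec} requires of the data; the vanishing-viscosity/bootstrap route above sidesteps this.
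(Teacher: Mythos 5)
Your Steps 1 and 2 reproduce the paper's argument exactly: solve the $\delta$-regularized system \eqref{eq:514}--\eqref{eq:516} with Theorem \ref{parabolicwp}, use the $\delta$-uniform estimate of Theorem \ref{energydec}, extract weak/weak-$\ast$ limits, pass to the limit in the linear equation and recover the energy estimate for the limit. Where you genuinely diverge is the final regularity upgrade: the paper gets $U\in\mathcal{C}([0,T];H^{m})$ by repeating the mollification/Cauchy-sequence argument of Theorem \ref{parabolicwp} inside $\mathcal{P}_{m}(T)$, i.e.\ it stays entirely within its own energy machinery, whereas you first obtain $v\in\mathcal{C}([0,T];H^{m})$ by the $L^{2}(H^{m+1})$--$L^{2}(H^{m-1})$ interpolation lemma and then treat \eqref{eq:51} and \eqref{eq:53} (with the coupling terms $A^{i}_{12}\partial_{i}v$, $A^{i}_{32}\partial_{i}v$ moved to the source) as linear symmetric hyperbolic systems, importing their classical well-posedness to conclude $u,w\in\mathcal{C}([0,T];H^{m})$ and then reading \eqref{eq:569}--\eqref{eq:570} off the equations. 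This is a legitimate alternative, and it has the merit of making \eqref{eq:569}--\eqref{eq:570} and uniqueness explicit (the paper leaves both essentially implicit); its cost is that the hyperbolic step leans on external theorems (\cite{katosym}, \cite{maj}) whose coefficient hypotheses must be matched to $\mathbf{II^{\prime}}$ (symmetry of $A^{0}_{1},A^{i}_{11}$ via \textbf{H1} and \textbf{I}, $\widehat{H}^{s}$-regularity with $s-1>d/2$, $\partial_{t}A^{0}_{1}\in L^{2}(0,T;H^{s-1})$), together with a uniqueness statement for the hyperbolic subsystem in the weak class $L^{\infty}(0,T;H^{m})$ with $u_{t}\in L^{2}(0,T;H^{m-1})$ so that the vanishing-viscosity limit can be identified with the continuous-in-time solution; these checks do go through, but you should state them, since the paper's self-contained route avoids them altogether.
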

\begin{proof}
By Theorem \ref{parabolicwp}, for each $0<\delta<1$, there is a unique solution $U^{\delta}=(u^{\delta},v^{\delta},w^{\delta})^{\top}\in\mathcal{P}_{m}(T)\cap\mathcal{C}\left([0,T];H^{m}\right)$ of the Cauchy problem \eqref{eq:512}-\eqref{eq:59} and thus,
\begin{equation}
\int_{0}^{T}\langle A^{0}U^{\delta}_{t}+A^{i}\partial_{i}U^{\delta}+DU^{\delta}-B^{ij}\partial_{i}\partial_{j}U^{\delta}-\delta\Lambda\Delta U^{\delta},\phi\rangle dt=\int_{0}^{T}\langle f,\phi\rangle\label{eq:integralsol}
\end{equation}
holds true for all $\phi\in L^{2}(0,T;L^{2})$. Because of Theorem \ref{energydec}, $U^{\delta}$ satisfies \eqref{eq:543}, there exists $U=(u,v,w)^{\top}$ and $U^{1}=(u^{1},v^{1},w^{1})^{\top}$ and a sub-sequence, still denoted as $\{U^{\delta}\}_{0<\delta<1}$, such that $u,w\in L^{\infty}(0,T;H^{m})\cap L^{2}(0,T;H^{m})$, $v\in L^{\infty}(0,T;H^{m})\cap L^{2}(0,T;H^{m+1})$, $U^{1}\in L^{2}(0,T; H^{m-1})$ with the property that,
\begin{equation}
\begin{aligned}
(u^{\delta},w^{\delta})&\overset{\ast}{\rightharpoonup}(u,w)~\mbox{in}~L^{\infty}(0,T;H^{m}),\quad (u^{\delta},w^{\delta})\rightharpoonup (u,w)~\mbox{in}~L^{2}(0,T;H^{m}),\\
v^{\delta}&\overset{\ast}{\rightharpoonup}v~\mbox{in}~L^{\infty}(0,T;H^{m}),\quad v^{\delta}\rightharpoonup v~\mbox{in}~L^{2}(0,T;H^{m+1}),\\
&(u_{t}^{\delta},v_{t}^{\delta},w_{t}^{\delta})\rightharpoonup (u^{1},v^{1},w^{1})~\mbox{in}~L^{2}(0,T;H^{m-1}),
\end{aligned}
\label{eq:562}
\end{equation}
for all $1\leq m\leq s$. After using definition 3.3 together with the weak convergence in \eqref{eq:562} we conclude that, $\partial_{t}u=u^{1}$, $\partial_{t}v=v^{1}$, $\partial_{t}w=w^{1}$. Therefore, by taking the limit in \eqref{eq:integralsol} we deduce that $U$ satisfies equations \eqref{eq:51}-\eqref{eq:53} with the initial condition \eqref{eq:59}. Finally, since $U=(u,v,w)^{\top}$ satisfies \eqref{eq:lowreg} and $\mathbf{II^{\prime}}$ implies \textbf{II}, Theorem \ref{energydec} assures that $U$ satisfies \eqref{eq:543}. To  conclude that $U\in\mathcal{C}([0,T];H^{m})$ we proceed as in Theorem \ref{parabolicwp}.
\end{proof}
\section{Invariant sets under iterations}
Let $U=(u,v,w)^{\top}\in\mathbb{R}^{N}$ be a given function of $(x,t)\in Q_{T}$ and consider the following linear equation for the unknown $V=(\hat{u},\hat{v},\hat{w})^{\top}$,
\begin{align}
A_{1}^{0}(U)\hat{u}_{t}+A^{i}_{11}(U)\partial_{i}\hat{u}+A_{12}^{i}(U)\partial_{i}\hat{v}&=f_{1}(U,D_{x}v), \label{eq:71}\\
A_{2}^{0}(U)\hat{v}_{t}+A^{i}_{21}(U)\partial_{i}\hat{u}+A_{22}^{i}(U)\partial_{i}\hat{v}+A^{i}_{23}(U)\partial_{i}\hat{w}&-B_{0}^{ij}(U)\partial_{i}\partial_{j}\hat{v}\nonumber\\
&=f_{2}(U,D_{x}U), \label{eq:72}\\
A_{3}^{0}(U)\hat{w}_{t}+A^{i}_{32}(U)\partial_{i}\hat{v}+A_{33}^{i}(U)\partial_{i}\hat{w}+D_{0}(U)\hat{w}&=f_{3}(U,D_{x}v), \label{eq:73}
\end{align}
where each coefficient, $A_{j}^{0}$, $A_{jl}^{i}$, $B_{0}^{ij}$ and $D_{0}$ represents a matrix of the same order as in section 6, however, in this case, every one of them is a given function of $U$. We have to make the following assumption for the matrix coefficients:
\begin{itemize}
	\item [\textbf{A}] The functions $A_{j}^{0}(U)$ for $j=1,2,3$; $A_{jl}^{i}(U)$ for $j,l=1,2,3$ and $i=1,..,d$; $B_{0}^{ij}(U)$, $i,j=1,..,d$ and $D_{0}(U)$, are sufficiently smooth functions of its argument $U\in\mathbb{R}^{N}$.
	\item [\textbf{B}] $A_{j}^{0}(U)$ for $j=1,2,3$ are real symmetric and positive definite, uniformly in each compact set with respect to $U\in\mathbb{R}^{N}$.
	\item [\textbf{C}] $A_{11}^{i}(U)\in\mathbb{M}_{n\times n}$ and $A_{33}^{i}(U)\in\mathbb{M}_{p\times p}$ are symmetric for $U\in\mathbb{R}^{N}$.
	\item [\textbf{D}]  The functions $B^{ij}_{0}(U)$ are real symmetric and satisfy $B_{0}^{ij}(U)=B_{0}^{ji}(U)$ for all $U\in\mathbb{R}^{N}$; $\sum_{i,j=1}^{d}B_{0}^{ij}(U)\omega_{i}\omega_{j}$ is real symmetric and positive definite in each compact set with respect to $U\in\mathbb{R}^{N}$ for all $\xi=(\xi_{1},..,\xi_{d})\in\mathbb{S}^{d-1}$.
	\item [\textbf{E}] For every $U\in\mathbb{R}^{N}$, $f_{1}(U,D_{x}v)\in\mathbb{R}^{n},\quad f_{2}(U,D_{x}U)\in\mathbb{R}^{k},\quad f_{3}(U,D_{x}v)\in\mathbb{R}^{p}$. Let $\eta\in\mathbb{R}^{kd}$ and $\xi\in\mathbb{R}^{Nd}$. The functions $f_{1}(U,\eta)$, $f_{2}(U,\xi)$ and $f_{3}(U,\eta)$  are sufficiently smooth in $(U,\eta)\in\mathcal{O}\times\mathbb{R}^{kd}$ and $(U,\xi)\in\mathcal{O}\times\mathbb{R}^{Nd}$, respectively and satisfy that $f_{1}(U,0)=0$, $f_{2}(U,0)=0$, $f_{3}(U,0)=0$, for any $U\in\mathcal{O}\subset\mathbb{R}^{N}$, where $\mathcal{O}\subset\mathbb{R}^{N}$ is an open convex set contained in $\mathbb{R}^{N}$.
\end{itemize}
We provide the system \eqref{eq:71}-\eqref{eq:73} with the initial condition \eqref{eq:59} and assume that
\begin{itemize}
	\item [\textbf{F}] $U_{0}\in H^{s}$ and $(u_{0},v_{0},w_{0})(x)\in\mathcal{O}_{g_{0}}$, where $\mathcal{O}_{g_{0}}$ is a bounded open convex set $\mathcal{O}_{g_{0}}$ in $\mathbb{R}^{N}$ such that $\overline{\mathcal{O}_{g_{0}}}\subset\mathcal{O}$.
\end{itemize}
\begin{defi}
Let $s\in\mathbb{N}$ such that $s\geq s_{0}+1$, $s_{0}=\left[\frac{d}{2}\right]+1$. We denote by $X_{T}^{s}(g_{2},M)$  the set of functions  $(u,v,w)^{\top}(x,t)=:U(x,t)\in\mathcal{O}$ satisfying \eqref{eq:567}-\eqref{eq:571} with $m=s$ and such that: there is a bounded open convex set $\mathcal{O}_{g_{2}}$ in $\mathbb{R}^{N}$, $\overline{\mathcal{O}}_{g_{2}}\subset\mathcal{O}$, 
\begin{equation}
U(x,t)\in\mathcal{O}_{g_{2}}~\forall(x,t)\in Q_{T},
\label{eq:79}
\end{equation}
and there is a positive constant $M$ such that
\begin{equation}
\begin{aligned}
\sup_{0\leq\tau\leq t}\|(u,v,w)(\tau)\|_{s}^{2}&+\int_{0}^{t}\|v(\tau)\|_{s+1}^{2}d\tau+\\
&+\int_{0}^{t}\|\left(u_{t}(\tau),v_{t}(\tau),w_{t}(\tau)\right)\|_{s-1}^{2}d\tau\leq M^{2}
\end{aligned}
\label{eq:710}
\end{equation}
for all $t\in[0,T]$.
\end{defi}
In this section we use the energy estimate \eqref{eq:543} and closely follow the steps lay down by Kawashima in \cite[Section 2.4]{kawa}. Our objective is to determine $\mathcal{O}_{g_{2}}$, $M$, and $T_{0}>0$ ($\leq T$) so that for $(u,v,w)\in X_{T_{0}}^{s}(g_{2},M)$, the Cauchy problem for \eqref{eq:71}-\eqref{eq:73} with initial condition \eqref{eq:59}  has a unique solution $(\hat{u},\hat{v},\hat{w})$ in the same set $X_{T_{0}}^{s}(g_{2},M)$. That is, $X_{T_{0}}^{s}(g_{2},M)$ is invariant under the mapping defined by $(u,v,w)\mapsto(\hat{u},\hat{v},\hat{w})$.\\
In the following, whenever a constant $C$ depends on the values of $U(x,t)\in\mathcal{O}_{g_{2}}$, we will simply write $C=C(g_{2})$ and $C_{F}$ will denote a constant depending on the function $F$.
\begin{lema}
\label{importantlemma}
Let  $U\in X_{T}^{s}(g_{2},M)$. Assume that $h=h(U)$ and $F=F(U,D_{x}U)$ are smooth functions of their arguments and that $F(V,0)=0$ for all $V\in\mathbb{R}^{N}$.
\begin{itemize}
	\item [(i)]  Set $U^{0}=U(t_{0})$ and $U^{1}=U(t_{1})$ for some $t_{0},t_{1}\in[0,T]$. Then, 
	\[\partial_{x}^{\alpha}h(U^{1})-\partial_{x}^{\alpha}h(U^{0})=\int_{0}^{1}\partial_{x}^{\alpha}\left\lbrace Dh(U_{r}))(U^{1}-U^{0})\right\rbrace dr,\]
for all $0\leq|\alpha|\leq s$, where $U_{r}=U^{0}+r(U^{1}-U^{0})$ for $r\in[0,1]$.
\item [(ii)] $h(U)\in \mathcal{C}([0,T];\widehat{H}^{s})$ and there is a positive constant $C_{h}(g_{2},M)$ (indepdendent of $t$) such that 
\begin{align}
\|h(U)\|_{\bar{s}}\leq C_{h}(g_{2},M).
\label{eq:711}
\end{align}
\item [(iii)] The mapping $\mathbb{R}^{d}\times\{t\}\ni (x,t) \mapsto h(U(x,t))$ is uniformly continuous for each $t\in[0,T]$.
\item [(iv)] $\partial_{t}h(U)\in L^{2}(0,T; H^{s-1})$.
\item [(v)] $F\in\mathcal{C}\left([0,T];H^{s-1}\right)$.
\end{itemize}
\end{lema}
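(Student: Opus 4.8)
The plan is to take (i) as the main tool and derive (ii)--(v) from it together with the chain-rule estimate \eqref{eq:19}, the algebra and product estimates \eqref{eq:31}--\eqref{eq:35}, and the continuous embedding $H^{r}\hookrightarrow\widehat H^{r}$ valid whenever $r>\tfrac d2$ (in particular for $r=s$ and $r=s-1$, since $s\ge s_{0}+1$ forces $s-1\ge[d/2]+1>\tfrac d2$). Throughout, $U\in X_{T}^{s}(g_{2},M)$ supplies $U\in\mathcal C([0,T];H^{s})$ with $\sup_{t}\|U(t)\|_{s}\le M$, $U_{t}\in L^{2}(0,T;H^{s-1})$, and $U(x,t)\in\mathcal O_{g_{2}}$ with $\overline{\mathcal O_{g_{2}}}\subset\mathcal O$; this is what makes every constant $t$-uniform and of the form $C(g_{2},M)$. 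For (i): by convexity of $\mathcal O_{g_{2}}$ the segment $U_{r}=U^{0}+r(U^{1}-U^{0})$ stays in $\mathcal O_{g_{2}}$, so $h$ is smooth along it, and the fundamental theorem of calculus in the $V$-variable gives $h(U^{1})-h(U^{0})=\int_{0}^{1}Dh(U_{r})(U^{1}-U^{0})\,dr$ pointwise in $x$; applying $\partial_{x}^{\alpha}$ and moving it inside the $r$-integral (differentiation under the integral sign, justified for $0\le|\alpha|\le s$ by the smoothness of $h$ and the $L^{2}$-boundedness of the relevant $x$-derivatives of $U^{1}-U^{0}$) yields the stated identity.

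For (ii), fix $t$: since $U(t)\in\widehat H^{s}$ with $\|\nabla U(t)\|_{s-1}\le M$ and $\|U(t)\|_{L^{\infty}}$ bounded by the diameter of $\mathcal O_{g_{2}}$, estimate \eqref{eq:19} gives $\|D_{x}h(U(t))\|_{s-1}\le C_{h}(g_{2},M)$, while $\|h(U(t))\|_{L^{\infty}}\le\sup_{\overline{\mathcal O_{g_{2}}}}|h|$; adding the two gives \eqref{eq:711}, uniformly in $t$. Continuity in $t$ follows from (i): take the $\widehat H^{s}$-norm, pass it inside the $r$-integral, and bound $\|Dh(U_{r})(U(t_{1})-U(t_{0}))\|_{\bar s}$ using the algebra property \eqref{eq:31} and \eqref{eq:19} (applied along $U_{r}(t_{0},t_{1})\in\mathcal O_{g_{2}}$, whose $\widehat H^{s}$-norm is controlled by $2M$ plus a constant) by $C_{h}(g_{2},M)\|U(t_{1})-U(t_{0})\|_{s}$, which tends to $0$ since $U\in\mathcal C([0,T];H^{s})$. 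Claim (iii) is soft: $s-1>\tfrac d2$ gives $\nabla U(\cdot,t)\in H^{s-1}\hookrightarrow C_{b}(\mathbb R^{d})$, so $U(\cdot,t)$ coincides a.e. with a Lipschitz function and is uniformly continuous, and $h$ is uniformly continuous on the compact set $\overline{\mathcal O_{g_{2}}}$ in which $U(\cdot,t)$ takes its values, so the composition is uniformly continuous. For (iv), apply (i) with $t_{0}=t$, $t_{1}=t+\tau$ to write the difference quotient of $h(U)$ as $\int_{0}^{1}Dh(U_{r}(t,t+\tau))\,\tfrac{U(t+\tau)-U(t)}{\tau}\,dr$; since $\tfrac{U(t+\tau)-U(t)}{\tau}=\tfrac1\tau\int_{t}^{t+\tau}U_{t}\to U_{t}$ in $L^{2}(0,T;H^{s-1})$ and $Dh(U_{r})\to Dh(U(t))$ with the $\widehat H^{s}$-bound from (ii), the product estimate \eqref{eq:32} gives convergence to $Dh(U)U_{t}$; thus $\partial_{t}h(U)=Dh(U)U_{t}$ with $\|\partial_{t}h(U)\|_{s-1}\le C_{h}(g_{2},M)\|U_{t}\|_{s-1}\in L^{2}(0,T)$.

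For (v) the hypothesis $F(V,0)=0$ is essential, and the idea is to factor out $D_{x}U$: write $F(U,D_{x}U)=\bigl(\int_{0}^{1}D_{\xi}F(U,rD_{x}U)\,dr\bigr)\cdot D_{x}U=\Psi(U,D_{x}U)\cdot D_{x}U$ with $\Psi(V,\xi):=\int_{0}^{1}D_{\xi}F(V,r\xi)\,dr$ smooth. The argument $(U,\nabla U)$ belongs to $\mathcal C([0,T];\widehat H^{s-1})$ — here one genuinely loses a derivative, since $U\in\widehat H^{s}$ only gives $\nabla U\in H^{s-1}$, hence $\nabla^{2}U\in H^{s-2}$ — so applying the argument of (ii) at level $s-1$ to $h:=\Psi$ gives $\Psi(U,\nabla U)\in\mathcal C([0,T];\widehat H^{s-1})$ with norm $\le C_{F}(g_{2},M)$, while $\nabla U\in\mathcal C([0,T];H^{s-1})$ and $s-1>\tfrac d2$; the product estimate \eqref{eq:32} then yields $F(U,D_{x}U)=\Psi(U,\nabla U)\cdot\nabla U\in\mathcal C([0,T];H^{s-1})$ with $\|F(U,D_{x}U)\|_{s-1}\le C_{F}(g_{2},M)$, and continuity in $t$ follows by splitting the difference into the $\Psi$-difference times $\nabla U(t_{1})$ plus $\Psi(U(t_{0}),\nabla U(t_{0}))$ times the $\nabla U$-difference, each controlled by $C_{F}(g_{2},M)\|U(t_{1})-U(t_{0})\|_{s}\to0$.

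The main obstacle is (v): without $F(V,0)=0$ the composition $F(U,\nabla U)$ is only in $\widehat H^{s-1}$ and need not lie in $L^{2}$, so the vanishing must be exploited through the factorization above; and even then the natural argument $(U,\nabla U)$ lives only in $\widehat H^{s-1}$, so every estimate for $F$ is run one regularity level below $s$, and one must check this loss is absorbed by the regularity actually guaranteed by $X_{T}^{s}(g_{2},M)$ (only $\nabla U\in H^{s-1}$ is needed, which holds). The parts (i)--(iv) are routine once (i) is established.
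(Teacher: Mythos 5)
Your proposal is correct and takes essentially the same route as the paper: the fundamental-theorem-of-calculus identity in (i), the chain-rule estimate \eqref{eq:19} plus the product estimates \eqref{eq:31}--\eqref{eq:32} for (ii)--(iv), and the exploitation of $F(V,0)=0$ for (v). The only (cosmetic) deviation is in (v), where you factor $F=\Psi(U,\nabla U)\cdot\nabla U$ and apply \eqref{eq:32} at level $s-1$, whereas the paper bounds $\|F\|$ by the same FTC argument and $\|D_{x}F\|_{s-2}$ separately via \eqref{eq:19}; both use the same ingredients and give the same conclusion.
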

\begin{proof}
Observe that 
\[h(U^{1})-h(U^{0})=\int_{0}^{1}Dh(U_{r})(U^{1}-U^{0})dr\]
and let us show that 
\[\partial_{i}h(U^{1})-\partial_{i}h(U^{0})=\int_{0}^{1}\partial_{i}\left\lbrace Dh(U_{r})(U^{1}-U^{0})\right\rbrace dr\]
for every $i=1,...,d$. First, notice that $Dh(U_{r})\in \widehat{H}^{s}$. Indeed, since $\mathcal{O}_{g_{2}}$ is a convex set, $U_{r}\in\mathcal{O}_{g_{2}}$ and the values of $U_{r}=U_{r}(x,t)$ for $(x,t)\in Q_{T}$ are contained in a compact set, implying that $Dh(U_{r})\in L^{\infty}$. By using theorem 3.4 (with $j=s$ and $F=Dh$) we conclude that $D_{x}Dh(U_{r})\in H^{s-1}$. Then, for a test function $\phi\in\mathcal{D}(\mathbb{R}^{d})$, Fubini's theorem states that
\begin{align*}
\int_{\mathbb{R}^{d}}\left(h(U^{1})-h(U^{0})\right)\partial_{i}\phi dx&=\int_{0}^{1}\left(\int_{\mathbb{R}^{d}}Dh(U_{r})(U^{1}-U^{0})\partial_{i}\phi dx\right)dr,\\
&=-\int_{\mathbb{R}^{d}}\int_{0}^{1}\partial_{i}\left(Dh(U_{r})(U^{1}-U^{0})\right)dr\phi dx
\end{align*}
which proves the statement for the case $|\alpha|=1$. The case $2\leq|\alpha|\leq s$ can be done similarly. This concludes $(i)$.\\
By theorem 3.4, $h(U)\in\widehat{H}^{s}$ and there is a constant $C_{h}=C_{h}(g_{2})$ such that
\begin{align*}
\|h(U)\|_{\bar{s}}=\|h(U)\|_{L^{\infty}}+\|D_{x}h(U)\|_{s-1}\leq C_{h}(g_{2})\left(1+\|D_{x}U\|_{s-1}\right).
\end{align*} 
Hence, \eqref{eq:711} follows by \eqref{eq:710}. Let $|\alpha|=0$ in $(i)$, and apply Sobolev's embedding theorem to obtain that,
\begin{equation*}
|h(U^{1})-h(U^{0})|\leq C(g_{2})|U_{1}-U_{0}|\leq C(g_{2})\kappa_{s}\|U^{1}-U^{0}\|_{s},
\end{equation*}
which goes to zero as $t_{0}\rightarrow t_{1}$ because $U\in\mathcal{C}([0,T];H^{s})$. It remains to prove that $D_{x}h(U^{0})\rightarrow D_{x}h(U^{1})$ with respect to the $H^{s-1}$ norm. Let $\alpha\in\mathbb{N}_{0}^{d}$ such that $\alpha=\alpha^{\prime}+\alpha^{\prime\prime}$, $|\alpha^{\prime}|=1$ and $0\leq \alpha^{\prime\prime}\leq s-1$. Apply Jensen's inequality in $(i)$, integrate in $\mathbb{R}^{d}$ and use Fubini's theorem to get
\[\|\partial_{x}^{\alpha^{\prime\prime}}\left(\partial_{x}^{\alpha^{\prime}}h(U^{1})-\partial_{x}^{\alpha^{\prime}}h(U^{0})\right)\|^{2}\leq\int_{0}^{1}\|\partial_{x}^{\alpha}\left\lbrace Dh(U_{r})(U^{1}-U^{0})\right\rbrace\|^{2} dr.\]
Hence,
\[\|\partial_{x}^{\alpha^{\prime}}h(U^{1})-\partial_{x}^{\alpha^{\prime}}h(U^{0})\|^{2}_{s-1}\leq\int_{0}^{1}\|Dh(U_{r}))\|_{\bar{s}}^{2}\|(U^{1}-U^{0})\|_{s}^{2}dr.\]
By Theorem 3.4, there is a positive constant $C_{Dh}=C_{Dh}(g_{2},M)$, independent of $r\in[0,1]$, such that 
\[\|\partial_{x}^{\alpha^{\prime}}h(U^{1})-\partial_{x}^{\alpha^{\prime}}h(U^{0})\|^{2}_{s-1}\leq C_{Dh}(g_{2},M)\|(U^{1}-U^{0})\|_{s}^{2}\quad\forall~|\alpha^{\prime}|=1.\]
Then, $(ii)$ follows.\\
To prove $(iii)$ notice that, by Sobolev's embedding theorem, $U(t)\in H^{s}$ for each $t\in[0,T]$, and since $h=h(U)$ is continuous on the compact set $\overline{\mathcal{O}_{g_{2}}}$, it follows that $h(U(t))$ is uniformly continuous on $\mathbb{R}^{d}$.\\
Given that $U_{t}\in L^{2}(0,T; H^{s-1})$, $(3.2)$ in theorem 3.1 ($r=s-1$) combined with theorem 3.4 yield
\begin{align*}
\int_{0}^{T}\|\partial_{t}h(U(t))\|_{s-1}^{2}dt\leq C_{Dh}(g_{2},M)^{2}\int_{0}^{T}\|U_{t}(t)\|_{s-1}^{2}dt<\infty,
\end{align*}
which proves $(iv)$.\\
Finally, since $F(U,0)=0$ for any $U\in\mathbb{R}^{N}$, we can proceed just as in $(i)$ to conclude that
\begin{eqnarray}
\|F(U,D_{x}U)\|^{2}&\leq&C\|(0,D_{x}U)\|_{s-1}^{2}\int_{0}^{1}\|DF(U,rD_{x}U)\|_{L^{\infty}}^{2}dr.\label{eq:717}\\
&\leq&C_{F}(g_{2},M)M<\infty.\nonumber
\end{eqnarray}
Now, since we can take $\|F(U,D_{x}U)\|_{s-1}=\|F(U,D_{x}U)\|+\|D_{x}F(U,D_{x}U)\|_{s-2}$, and $\|D_{x}F(U,D_{x}U)\|_{s-2}\leq C(g_{2},M)<\infty$, as consequence of theorem 3.4 (by taking $j=s-1$), we conclude that $F\in H^{s-1}$. In order to prove the continuity of $t\mapsto F=F(U(t);D_{x}U(t))$ with respect of the $\|\cdot\|_{s-1}$, we proceed as in $(ii)$
\end{proof}
\begin{lema}
\label{existencelemma}
Assume that $U\in X_{T}^{s}(g_{2},M)$ and that conditions \textup{\textbf{A}}-\textup{\textbf{F}} are satisfied. Then the Cauchy problem for the linear system \eqref{eq:71}-\eqref{eq:73} with initial condition \eqref{eq:59} has a unique solution $V=(\hat{u},\hat{v},\hat{w})^{\top}$ satisfying \eqref{eq:567}-\eqref{eq:571} with $m=s$ and the energy estimate \eqref{eq:543}. Furthermore, there is a positive constant $C_{2}=C_{2}(g,M)$ such that, 
\begin{equation}
\Phi_{0}^{2}(t)\leq C_{1}e^{C_{2}\left(t+Mt^{1/2}\right)}=:\Phi_{1}^{2}(t),
\label{eq:720}
\end{equation}
for all $t\in[0,T]$.
\end{lema}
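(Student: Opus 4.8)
The plan is to read the linear system \eqref{eq:71}--\eqref{eq:73} as a concrete instance of the linear system \eqref{eq:51}--\eqref{eq:53}, with the matrix coefficients frozen along the given field $U\in X_T^s(g_2,M)$ (that is, $A_1^0=A_1^0(U(x,t))$, $A_{12}^i=A_{12}^i(U(x,t))$, and so on) and with source $F=\left(f_1(U,D_xv),\,f_2(U,D_xU),\,f_3(U,D_xv)\right)^\top$, and then to invoke Theorem \ref{wplinearcoupling}. Hence the first task is to check that these composed coefficients and this source satisfy hypotheses \textbf{I}, $\mathbf{II^{\prime}}$ and \textbf{III} with $m=s$; once this is verified, Theorem \ref{wplinearcoupling} delivers the unique $V=(\hat u,\hat v,\hat w)^\top$ satisfying \eqref{eq:567}--\eqref{eq:571} with $m=s$ and the energy estimate \eqref{eq:543}, and it only remains to prove \eqref{eq:720}.

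For \textbf{I} and the algebraic part of $\mathbf{II^{\prime}}$ --- symmetry of $A_{11}^i(U)$ and $A_{33}^i(U)$, symmetry and uniform positive definiteness of the $A_j^0(U)$ and of $\sum_{i,j}B_0^{ij}(U)\omega_i\omega_j$ --- I would use \textbf{B}, \textbf{C}, \textbf{D} together with \eqref{eq:79}, which confines the values of $U(x,t)$ to the compact set $\overline{\mathcal{O}_{g_2}}$ and thus makes the constants $a_0,a_1,\eta$ uniform. For the Sobolev regularity in $\mathbf{II^{\prime}}$ (namely $\mathbf{H2^{\prime}}$, \textbf{H3}, $\mathbf{H4^{\prime}}$, \textbf{H5}, \textbf{H6}, \textbf{H7} for each non-zero block of \eqref{eq:55}--\eqref{eq:58}) I would apply Lemma \ref{importantlemma} coefficientwise: part (ii) yields the $\mathcal{C}([0,T];\widehat{H}^s)$ bounds, part (iii) the uniform continuity in $x$ needed in \textbf{H6}, and part (iv) the $L^2(0,T;H^{s-1})$ bounds on $\partial_t A^0(U)$, $\partial_t A^i(U)$, $\partial_t B^{ij}(U)$, $\partial_t D_0(U)$. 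For the inverses one writes $(A_j^0)^{-1}(U)=\Theta(U)$ with $\Theta$ the smooth matrix-inversion map --- admissible because by \textbf{B} the values of $A_j^0(U)$ lie in a compact set of invertible matrices --- so Lemma \ref{importantlemma} applies to $\Theta(U)$ as well.

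For the source, Lemma \ref{importantlemma}(v) gives $F\in\mathcal{C}([0,T];H^{s-1})$, hence $f_1,f_2,f_3\in\mathcal{C}([0,T];H^{s-1})$, which is the continuity asked for in \textbf{III}. The remaining point --- and the step I expect to be the main obstacle --- is the extra integrability $f_1,f_3\in L^2(0,T;H^s)$ required in \textbf{III} for $m=s$, which a generic smooth nonlinear composition of an $H^s$-field does not possess. Here assumption \textbf{E} is essential: $f_1$ and $f_3$ depend only on $(U,D_xv)$ and vanish when $D_xv=0$. Since $s\ge s_0+1$ one has $H^{s-1}\hookrightarrow L^\infty$, so $\|D_xv(t)\|_\infty\le C\|v(t)\|_s\le CM$ uniformly in $t$ by \eqref{eq:710}; consequently $W:=(U,D_xv)$ has uniformly bounded $L^\infty$-norm, and $W\in\widehat{H}^s$ for a.e.\ $t$ because $D_x^2v\in H^{s-1}$ (as $v\in L^2(0,T;H^{s+1})$). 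Bounding $\|f_i(U,D_xv)\|\le C(g_2,M)\|D_xv\|$ from $f_i(U,0)=0$, and applying the chain-rule estimate \eqref{eq:19} with $j=s$ to $f_i$ regarded as a composition with $W$, one obtains $\|f_i(U,D_xv)(t)\|_s\le C(g_2,M)\bigl(1+\|v(t)\|_{s+1}\bigr)$; squaring, integrating in $t$, and using $v\in L^2(0,T;H^{s+1})$ from \eqref{eq:710} gives $f_1,f_3\in L^2(0,T;H^s)$. (The same reasoning applied to $f_2$, which sees all of $D_xU$ --- including $D_xu,D_xw$ that lie only in $H^{s-1}$ --- yields only $f_2\in L^2(0,T;H^{s-1})$, exactly as \textbf{III} allows.) With \textbf{I}, $\mathbf{II^{\prime}}$, \textbf{III} verified, Theorem \ref{wplinearcoupling} produces $V$ with the stated regularity and the energy estimate \eqref{eq:543}.

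Finally, for \eqref{eq:720} I would unravel $\Phi_0^2(t)=C_1 e^{C_1\int_0^t(\mu_0(\tau)+\mu_1(\tau))\,d\tau}$ from \eqref{eq:538}, with $\mu_0,\mu_1$ defined from the composed coefficients as in \eqref{eq:413}--\eqref{eq:414}. By Lemma \ref{importantlemma}(ii) every $\widehat{H}^s$-norm entering $\mu_0$ is bounded by a constant $C(g_2,M)$ uniform in $t$, so $\int_0^t\mu_0\le C(g_2,M)\,t$. For $\mu_1$, Lemma \ref{importantlemma}(iv) together with \eqref{eq:710} gives $\int_0^T\bigl(\|\partial_tA^0(U)\|_{s-1}^2+\sum_{i,j}\|\partial_tB^{ij}(U)\|_{s-1}^2\bigr)d\tau\le C(g_2,M)^2\int_0^T\|U_t\|_{s-1}^2\,d\tau\le C(g_2,M)^2M^2$, so the Cauchy--Schwarz inequality yields $\int_0^t\mu_1(\tau)\,d\tau\le t^{1/2}\bigl(\int_0^t\mu_1^2\,d\tau\bigr)^{1/2}\le C(g_2,M)\,M\,t^{1/2}$. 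Combining the two bounds gives $C_1\int_0^t(\mu_0+\mu_1)\le C_2\bigl(t+Mt^{1/2}\bigr)$ for a constant $C_2=C_2(g,M)$ that absorbs $C_1=C_1(g,\eta,a_0)$ and the dependence on $g_2$, hence $\Phi_0^2(t)\le C_1 e^{C_2(t+Mt^{1/2})}$, which is \eqref{eq:720}.
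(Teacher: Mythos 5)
Your proposal is correct and follows essentially the same route as the paper: reduce to Theorem \ref{wplinearcoupling} by verifying \textbf{I} from \textbf{C}, the algebraic and positivity parts of $\mathbf{II^{\prime}}$ from \textbf{B}, \textbf{D} and the confinement of $U$ to $\overline{\mathcal{O}_{g_{2}}}$, the Sobolev/time-regularity parts of $\mathbf{II^{\prime}}$ coefficientwise from Lemma \ref{importantlemma}, and \textbf{III} via Lemma \ref{importantlemma}(v) together with the vanishing of $f_{1},f_{3}$ at $D_{x}v=0$ plus the chain-rule estimate and $v\in L^{2}(0,T;H^{s+1})$, and finally bound $\int_{0}^{t}\mu_{0}\lesssim t$ and $\int_{0}^{t}\mu_{1}\lesssim Mt^{1/2}$ by Cauchy--Schwarz to get \eqref{eq:720}. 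This matches the paper's proof step by step, with only cosmetic differences (e.g.\ your explicit treatment of the inverse matrices through the smooth inversion map).
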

\begin{proof}
Since $U=U(x,t)$, the coefficients in \eqref{eq:71}-\eqref{eq:73} are also functions of $(x,t)\in Q_{T}$. Then, the result follows from Theorem \ref{wplinearcoupling}, with $m=s$, once assumptions \textup{\textbf{I}}, $\mathbf{II^{\prime}}$ and \textup{\textbf{III}} are verified. Observe that assumptions \textbf{I} is a direct consequence of \textbf{C}. Assumption $\mathbf{II^{\prime}}$ can be decomposed into several conditions for the block matrices. In this case, notice that $\mathbf{H4^{\prime}}$, \textbf{H3}, \textbf{H5}, and \textbf{H7}  are a direct consequence of Lemma \ref{importantlemma}. Meanwhile, \textbf{H1} follows immediately by assumptions \textbf{B} and \textbf{D}. Indeed, since $U\in\mathcal{O}_{g_{2}}$, condition \textbf{B} assures that, for each $j=1,2,3$, there is a positive constant $a_{0}^{j}=a_{0}^{j}(g_{2})$ independent of $(x,t)$ such that
	\begin{equation}
	a_{0}^{j}|y^{j}|^{2}\leq(A_{j}^{0}(U)y^{j},y^{j})_{\mathbb{R}^{N(j)}}~\forall y^{j}\in\mathbb{R}^{N(j)},\nonumber
	\end{equation}
 where $N(1)=n$, $N(2)=k$ and $N(3)=p$. Define, $\sup_{Q_{T}}|A_{j}^{0}(U(x,t))|=:a_{1}^{j}(g_{2})<\infty$ for each $j=1,2,3$ and thus,
\begin{equation}
	(A_{j}^{0}(U)y^{j},y^{j})_{\mathbb{R}^{N(j)}}\leq a_{1}^{j}|y^{j}|^{2}~\forall y^{j}\in\mathbb{R}^{N(j)}.\nonumber
	\end{equation} 
	By taking $a_{0}(g_{2})=\min_{j}\{ a_{0}^{j}\}$ and $a_{1}(g_{2})=\max_{j}\{ a_{0}^{j}\}$ we obtain that 
	\begin{equation}
	a_{0}|y^{j}|^{2}\leq(A_{j}^{0}(U)y^{j},y^{j})_{\mathbb{R}^{N(j)}}\leq a_{1}|y^{j}|^{2}~\forall y^{j}\in\mathbb{R}^{N(j)},\label{eq:714}
	\end{equation}
	for all $j=1,2,3$. A similar argument leads us to conclude that the symbol $B^{ij}_{0}(U(x,t))\omega_{i}\omega_{j}$ is positive definite for all $\omega=(\omega_{1},..,\omega_{d})\in\mathbb{S}^{d-1}$, with constant $\eta=\eta(g_{2})>0$ independently of $(x,t)\in Q_{T}$.\\
Statement $\mathbf{H2^{\prime}}$ is a consequence of \textbf{A} and \eqref{eq:714}. In particular, according to \eqref{eq:711}, we take $g=g(g_{2},M)$ such that 
\begin{align}
C_{A^{0}_{j}}(g_{2},M),C_{(A^{0}_{j})^{-1}}(g_{2},M),\sum_{i,j=1}^{d}C_{B^{ij}}(g_{2},M)\leq g.
\label{eq:715}
\end{align}
This concludes all the statements comprising $\mathbf{II^{\prime}}$. We are left with verifying \textbf{III}. By \textbf{E}, we can take $F(U,D_{x}U)=\left(f_{1}(U, D_{x}v),f_{2}(U, D_{x}U),f_{3}(U, D_{x}v)\right)^{\top}$ in Lemma \ref{importantlemma} and thus, $f_{j}\in\mathcal{C}([0,T];H^{s-1})$ ($j=1,2,3$). To show that $f_{l}(U,D_{x}v)\in L^{2}(0,T;H^{s})$ for $l=1,3$, apply \eqref{eq:717}, so that, 
\[\int_{0}^{T}\|f_{l}(U(t),D_{x}v(t))\|^{2}\leq C_{F}(g_{2},M)\int_{0}^{T}\|v(t)\|_{s}^{2}dt,\]
and use theorem 3.4 with $j=s$ to obtain,
\[\int_{0}^{T}\|D_{x}f_{l}(U(t),D_{x}v(t))\|_{s-1}^{2}\leq C(g_{2},M)\int_{0}^{T}\|U(t)\|_{s}^{2}+\|v(t)\|_{s+1}^{2}dt<\infty.\]
This proves \textbf{III}. Finally, by Lemma \ref{importantlemma}, there are two positive constants $K_{1}=K_{1}(g_{2},M)$ and $K_{2}=K_{2}(g_{2},M)$ such that 
\begin{align*}
\int_{0}^{t}\mu_{0}(\tau)d\tau\leq K_{1}t,\quad\int_{0}^{t}\mu_{1}(\tau)d\tau\leq K_{2}t^{1/2}\left(\int_{0}^{t}\|U_{t}(\tau)\|_{s-1}^{2}d\tau\right)^{1/2}.
\end{align*}
Then, \eqref{eq:720} follows by \eqref{eq:538} with $C_{2}:=C_{1}\max\left(K_{1},K_{2}\right)$.
\end{proof}
Now, fix a constant $g_{2}>0$ so that $0<g_{2}<g_{1}:=d(\mathcal{O}_{g_{0}},\partial\mathcal{O})$ and take
\begin{eqnarray}
\mathcal{O}_{g_{2}}&=&g_{2}-\mbox{neighborhood of}~\mathcal{O}_{g_{0}},\label{eq:722}\\
M&=&2\sqrt{C_{1}}\|(u_{0},v_{0},w_{0})\|_{s}.\label{eq:723}
\end{eqnarray}
We are ready for the main result of this section.
\begin{theo}
\label{invariant}
There is a positive constant $T_{0}$ that depends on $g_{0}$, $g_{2}$ and $\|U_{0}\|_{s}$ such that, if $U=(u,v,w)^{\top}\in X_{T_{0}}^{s}(g_{2},M,)$ with $g_{2}$ and $M$ defined by \eqref{eq:722}-\eqref{eq:723}, the Cauchy problem for \eqref{eq:71}-\eqref{eq:73} with initial condition \eqref{eq:59} has a unique solution $V=(\hat{u},\hat{v},\hat{w})^{\top}$ in the same set $X_{T_{0}}^{s}(g_{2},M)$.
\end{theo}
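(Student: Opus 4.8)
\textit{Proof proposal.} The plan is to manufacture $V=(\hat u,\hat v,\hat w)^\top$ by invoking Lemma~\ref{existencelemma} and then to show that, once $T_0$ is taken small enough, the solution lands back in $X_{T_0}^s(g_2,M)$. Recall that membership in this set is the conjunction of the confinement $V(x,t)\in\mathcal{O}_{g_2}$ on $Q_{T_0}$ (equation \eqref{eq:79}) and the a~priori bound \eqref{eq:710}. Fix $U\in X_{T_0}^s(g_2,M)$ with $g_2,M$ as in \eqref{eq:722}--\eqref{eq:723}. Since \textbf{A}--\textbf{F} hold, Lemma~\ref{existencelemma} furnishes a unique solution $V$ of \eqref{eq:71}--\eqref{eq:73} with initial datum \eqref{eq:59}, enjoying the regularity \eqref{eq:567}--\eqref{eq:571} for $m=s$, the energy estimate \eqref{eq:543}, and the refinement $\Phi_0^2(t)\le\Phi_1^2(t)=C_1e^{C_2(t+Mt^{1/2})}$ of \eqref{eq:720}. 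It remains only to squeeze smallness in $T_0$ out of these facts.

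I would first verify \eqref{eq:710}. Since $K_0^2$ and $\Phi_0^2$ are non-decreasing, applying \eqref{eq:543} at every $\tau\le t$ shows that the left-hand side of \eqref{eq:710} is bounded by $2K_0^2(t)\Phi_0^2(t)\le 2\bigl(\|U_0\|_s^2+\int_0^t\mathcal{F}_s^2(f_1,f_2,f_3)\,d\tau\bigr)\Phi_1^2(t)$; with $M^2=4C_1\|U_0\|_s^2$ and $\Phi_1^2(0)=C_1$ this quantity equals $\tfrac12M^2$ at $t=0$, so there is a genuine margin. Using Lemma~\ref{importantlemma} (part (v) together with the estimate in its proof) and the bounds established in the proof of Lemma~\ref{existencelemma}, one has $\|f_2(\tau)\|_{s-1}^2\le C(g_2,M)$ and $\|f_l(\tau)\|_s^2\le C(g_2,M)\bigl(1+\|v(\tau)\|_{s+1}^2\bigr)$ for $l=1,3$, hence
\[\int_0^t\mathcal{F}_s^2(f_1,f_2,f_3)\,d\tau\le C(g_2,M)\Bigl(t+\int_0^t\|v(\tau)\|_{s+1}^2\,d\tau\Bigr).\]
Since $U\in X_{T_0}^s(g_2,M)$ forces $v\in L^2(0,T_0;H^{s+1})$, the integral $\int_0^t\|v(\tau)\|_{s+1}^2\,d\tau$ tends to $0$ as $t\to0$; combined with $\Phi_1^2(t)\to C_1$ this gives $2K_0^2(t)\Phi_0^2(t)\le M^2$ on $[0,T_0]$ once $T_0$ is small, which is \eqref{eq:710}.

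Next I would check the confinement. The bound just obtained yields $\int_0^t\|V_t(\tau)\|_{s-1}^2\,d\tau\le M^2$ on $[0,T_0]$; since $V\in\mathcal{C}([0,T_0];H^{s-1})$ has weak derivative $V_t$, we may write $V(t)-U_0=\int_0^tV_t(\tau)\,d\tau$ in $H^{s-1}$, so $\|V(t)-U_0\|_{s-1}\le t^{1/2}M$, and because $s-1\ge s_0>d/2$, Sobolev's embedding gives $\|V(t)-U_0\|_{L^\infty}\le\kappa M t^{1/2}$ with $\kappa$ a fixed embedding constant. As $U_0(x)\in\mathcal{O}_{g_0}$ by \textbf{F} and $\mathcal{O}_{g_0}$ is open and convex, shrinking $T_0$ further so that $\kappa M T_0^{1/2}<g_2$ places $V(x,t)$ inside the $g_2$-neighborhood $\mathcal{O}_{g_2}$ of $\mathcal{O}_{g_0}$ for every $(x,t)\in Q_{T_0}$; since $g_2<g_1=d(\mathcal{O}_{g_0},\partial\mathcal{O})$ one also has $\overline{\mathcal{O}_{g_2}}\subset\mathcal{O}$, so \eqref{eq:79} holds. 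Thus $V\in X_{T_0}^s(g_2,M)$, and the uniqueness of such a $V$ is already contained in Lemma~\ref{existencelemma}; this mirrors the scheme of \cite[Section~2.4]{kawa}.

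The step that needs care is the estimate of $\int_0^t\mathcal{F}_s^2\,d\tau$ inside the proof of \eqref{eq:710}. As Section~\ref{couvsuncou} makes explicit, the coupling term $A_{12}^i\partial_i\hat v$ forces the loss of the factor $t$ in front of the hyperbolic source contribution (compare \eqref{eq:66} with \eqref{eq:616}), so $\int_0^t\|f_1\|_s^2\,d\tau$ cannot be dominated by $C(g_2,M)tM^2$ the way it is in the decoupled treatment of \cite{kawa}; one is forced to control it through $C(g_2,M)\int_0^t\|v(\tau)\|_{s+1}^2\,d\tau$ and to lean on both the integrability of $\tau\mapsto\|v(\tau)\|_{s+1}^2$ and the factor-four slack engineered into the choice $M=2\sqrt{C_1}\|U_0\|_s$. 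Keeping track of this so that the resulting $T_0$ depends only on $g_0$, $g_2$ and $\|U_0\|_s$ (through $M$, $C_1$, $C_2$ and the Sobolev constant $\kappa$) is exactly where the present argument parts ways with Kawashima's, and is the main obstacle.
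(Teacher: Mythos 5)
Your argument is essentially the paper's own proof: Lemma \ref{existencelemma} together with \eqref{eq:720} produces the unique solution and the energy bound, smallness of $T_{0}$ forces the left-hand side of \eqref{eq:710} below $M^{2}=4C_{1}\|U_{0}\|_{s}^{2}$, and the confinement \eqref{eq:79} is obtained exactly as in \eqref{eq:727}--\eqref{eq:728} through $\kappa_{s-1}T_{0}^{1/2}M\leq g_{2}$, with uniqueness already supplied by Lemma \ref{existencelemma}. The ``main obstacle'' you flag at the end --- that the smallness of $\int_{0}^{T_{0}}\mathcal{F}^{s}$, and hence the admissible $T_{0}$, is extracted from the absolute continuity of $t\mapsto\int_{0}^{t}\|v(\tau)\|_{s+1}^{2}d\tau$ for the given $U$ --- is treated in the paper by the very same device (condition \eqref{eq:725} is imposed precisely because $\mathcal{F}^{s}\in L^{1}(0,T)$), so this is not a point where you part ways with the paper's proof; you have merely made explicit a uniformity question that the paper leaves implicit.
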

\begin{proof}
By  Lemma \ref{existencelemma}, there is a unique solution $V=(\hat{u},\hat{v},\hat{w})^{\top}$ to \eqref{eq:71}-\eqref{eq:73} with initial condition \eqref{eq:59} such that, the estimate
\begin{align}
&\|V(t)\|_{s}^{2}+\int_{0}^{t}\|\hat{v}(\tau)\|_{s+1}^{2}d\tau+\int_{0}^{t}\|V_{t}(\tau)\|_{s-1}^{2}d\tau\nonumber\\
&\leq C_{1}e^{C_{2}\left(t+Mt^{1/2}\right)}\left\lbrace\|U_{0}\|_{s}^{2}+\int_{0}^{t}\mathcal{F}^{s}(f_{1},f_{2},f_{3})(\tau)d\tau\right\rbrace\label{eq:724}
\end{align}
holds for all $t\in[0,T]$. Since $\mathcal{F}^{s}(f_{1},f_{2},f_{3})\in L^{1}(0,T)$, we can choose $0<T_{0}<T$ such that 
\begin{equation}
\int_{0}^{T_{0}}\mathcal{F}^{s}(f_{1},f_{2},f_{3})(t)dt\leq \|U_{0}\|_{s}^{2}\quad\mbox{and}\quad e^{C_{2}\left(T_{0}+MT^{1/2}_{0}\right)}\leq 2,
\label{eq:725}
\end{equation}
happen simultaneously. Then, from \eqref{eq:724},
\[\|\widehat{U}(t)\|_{s}^{2}+\int_{0}^{t}\|\hat{v}(\tau)\|_{s+1}^{2}d\tau+\int_{0}^{t}\|\widehat{U}_{t}(\tau)\|_{s-1}^{2}d\tau\leq 4C_{1}\|(u_{0},v_{0},w_{0})\|_{s}^{2}=M^{2}.\]
The last estimate implies that,
\begin{align}
|(\hat{u},\hat{v},\hat{w})(x,t)-(u_{0},v_{0},w_{0})(x)|&\leq\int_{0}^{t}\|\partial_{t}(\hat{u},\hat{v},\hat{w})(\tau)\|_{L^{\infty}}d\tau\nonumber\\
&\leq\kappa_{s-1}\int_{0}^{t}\|\widehat{U}_{t}(\tau)\|_{s-1}d\tau\nonumber\\
&\leq\kappa_{s-1}t^{1/2}M\leq\kappa_{s-1}T_{0}^{1/2}M\label{eq:727}.
\end{align}
If $T_{0}$ is such that 
\begin{equation}
\kappa_{s-1}T_{0}^{1/2}M\leq g_{2},
\label{eq:728}
\end{equation}
then, from \eqref{eq:728}, \eqref{eq:79} is satisfied. Summarizing, we have to choose a positive $T_{0}$ that satisfies \eqref{eq:725} and \eqref{eq:728} to assure that $V\in X_{T_{0}}^{s}(g_{2},M)$ whenever $U\in X_{T_{0}}^{s}(g_{2},M)$. This completes the proof of Theorem \ref{invariant}.
\end{proof}
\section{Boundedness in the high norm and contraction in the low norm}
In this section we find $X$ and $Y$ such that the fixed point Theorem \ref{fixedpo} can be applied. Consider the initial value problem for \eqref{eq:71}-\eqref{eq:73}, written as
\begin{equation}
\begin{aligned}
A^{0}(U)V_{t}+A^{i}(U)\partial_{i}V-B^{ij}(U)\partial_{i}\partial_{j}V+D(U)V&=F(U;D_{x}U),\\
\left.V\right\rvert_{t=0}&=U_{0}
\end{aligned}
\label{eq:81}
\end{equation}
where for each $U=(u,v,w)\in X_{T_{0}}^{s}(g_{2},M)$ the matrix coefficients are given as in \eqref{eq:55}-\eqref{eq:58}. According with Theorem \ref{invariant}, \eqref{eq:81} has a unique solution $V=(\hat{u},\hat{v},\hat{w})^{\top}\in X_{T_{0}}^{s}(g_{2},M)$. This means that the operator
\begin{equation}
\begin{aligned}
\mathcal{T}:X_{T_{0}}^{s}(g_{2},M)&\rightarrow X_{T_{0}}^{s}(g_{2},M)\\
U&\mapsto V
\end{aligned}
\label{eq:82}
\end{equation}
is well defined. Thus, we can take
\begin{equation}
X:=X_{T_{0}}^{s}(g_{2},M).
\label{eq:83}
\end{equation}
Now, we have to find a Banach space $Y$ such that $X\subset Y$ and statements $(i)$ and $(ii)$ of Theorem \ref{fixedpo} are satisfied. Consider then, $U^{p}\in X$ and set $\mathcal{T}(U^{p})=:V^{p}\in X$ and $\mathcal{T}(V^{p})=:W^{p}\in X$ for $p=1,2$. Set $V^{p}=(\hat{u}^{p},\hat{v}^{p},\hat{w}^{p})$, $W^{p}=(\bar{u}^{p},\bar{v}^{p},\bar{w}^{p})$ for $p=1,2$. Then, by the definition of $W^{p}$, the difference $W^{2}-W^{1}=W$ satisfies,
\begin{equation}
\begin{aligned}
A^{0}(V^{2})W_{t}+A^{i}(V^{2})\partial_{i}W-&B^{ij}(V^{2})\partial_{i}\partial_{j}W+D(V^{2})W=\widehat{R
},\\
\left.W\right\rvert_{t=0}&=0,
\end{aligned}
\label{eq:86}
\end{equation}
where $\widehat{R}:=(\hat{f}_{1}, \hat{f}_{2}, \hat{f}_{3})^{\top}$ satisfies the following estimates.
\begin{lema}
\label{nonlinearestimates}
Let $V^{p},W^{p}\in X_{T_{0}}^{s}(g_{2},M)$ for $p=1,2$. Then, 
\begin{align*}
\|\hat{f}_{l}\|_{s-1}&\leq C\left(\|\hat{u}^{2}-\hat{u}^{1}\|_{s-1}+\|\hat{v}^{2}-\hat{v}^{1}\|_{s}+\|\hat{w}^{2}-\hat{w}^{1}\|_{s-1}\right),\quad l=1,3,\\
\|\hat{f}_{2}\|_{s-2}&\leq C\|(\hat{u}^{2}-\hat{u}^{1},\hat{v}^{2}-\hat{v}^{1},\hat{w}^{2}-\hat{w}^{1})\|_{s-1},
\end{align*}
for some positive constant $C=C(g_{2},M)$.
\end{lema}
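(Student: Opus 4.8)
The plan is to make the right-hand side $\widehat{R}$ of the difference equation \eqref{eq:86} completely explicit and then estimate each of its pieces with the multiplication, commutator and chain-rule inequalities of Section~3 together with the uniform bounds of Lemma~\ref{importantlemma}. First I would subtract the two identities defining $W^{1}=\mathcal{T}(V^{1})$ and $W^{2}=\mathcal{T}(V^{2})$ (each of the form \eqref{eq:81} with $U$ replaced by $V^{p}$ and $V$ by $W^{p}$), substitute $W^{2}=W+W^{1}$, and move to the left-hand side every term carrying a coefficient frozen at $V^{2}$. What remains is
\[
\widehat{R}=\bigl(F(V^{2};D_{x}V^{2})-F(V^{1};D_{x}V^{1})\bigr)-\bigl(A^{0}(V^{2})-A^{0}(V^{1})\bigr)W^{1}_{t}-\bigl(A^{i}(V^{2})-A^{i}(V^{1})\bigr)\partial_{i}W^{1}+\bigl(B^{ij}(V^{2})-B^{ij}(V^{1})\bigr)\partial_{i}\partial_{j}W^{1}-\bigl(D(V^{2})-D(V^{1})\bigr)W^{1},
\]
and reading off the three blocks through \eqref{eq:55}-\eqref{eq:58} identifies $\hat f_{1},\hat f_{2},\hat f_{3}$ as sums of a source-term difference and of coefficient differences multiplied by derivatives of the components of $W^{1}=(\bar u^{1},\bar v^{1},\bar w^{1})$.

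Next I would treat the coefficient-difference terms. For any of the smooth matrix coefficients $L$, the convexity of $\mathcal{O}_{g_{2}}$ gives the mean-value representation $L(V^{2})-L(V^{1})=\int_{0}^{1}DL\bigl(V^{1}+r(V^{2}-V^{1})\bigr)(V^{2}-V^{1})\,dr$ exactly as in Lemma~\ref{importantlemma}(i); by Lemma~\ref{importantlemma}(ii), $DL(V_{r})$ is bounded in $\widehat{H}^{s}$ uniformly in $r\in[0,1]$ by a constant $C(g_{2},M)$, so \eqref{eq:32} gives $\|L(V^{2})-L(V^{1})\|_{s-1}\le C(g_{2},M)\|V^{2}-V^{1}\|_{s-1}$. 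Multiplying by the relevant derivative of a component of $W^{1}$ and using $W^{1}\in X_{T_{0}}^{s}(g_{2},M)$ — so that $\partial_{i}\bar u^{1}$, $\partial_{i}\bar w^{1}$, $\bar u^{1}_{t}$, $\bar w^{1}_{t}$ lie in $H^{s-1}$ with norm $\le M$, while $\bar v^{1}_{t}\in H^{s-2}$ and $\partial_{i}\partial_{j}\bar v^{1}\in H^{s-1}$ for a.e.\ $t$ — each such product is bounded by $C(g_{2},M)\|V^{2}-V^{1}\|_{s-1}$ in $H^{s-1}$, and in $H^{s-2}$ for the two contributions to $\hat f_{2}$ carrying $\bar v^{1}_{t}$ or $\partial_{i}\partial_{j}\bar v^{1}$; the products of $H^{s-1}$ (resp.\ $H^{s-2}$) functions are controlled by \eqref{eq:35} (resp.\ \eqref{eq:33}). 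This is already the origin of the asymmetry in the statement: the second-order term and the $\bar v^{1}_{t}$ term force $\hat f_{2}$ to be measured only in $H^{s-2}$.

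For the source-term differences, with $\xi^{p}=D_{x}\hat v^{p}$ when $l\in\{1,3\}$ and $\xi^{p}=D_{x}V^{p}$ when $l=2$, I would use
\[
f_{l}(V^{2},\xi^{2})-f_{l}(V^{1},\xi^{1})=\int_{0}^{1}\Bigl\{D_{U}f_{l}(V_{r},\xi_{r})(V^{2}-V^{1})+D_{\xi}f_{l}(V_{r},\xi_{r})(\xi^{2}-\xi^{1})\Bigr\}\,dr,
\]
and estimate the integrand with the chain-rule bound \eqref{eq:19} (the hypothesis $f_{l}(\cdot,0)=0$ from assumption~\textbf{E} also guaranteeing that the relevant compositions lie in $L^{2}$) and the multiplication estimates. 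The first summand contributes $C(g_{2},M)\|V^{2}-V^{1}\|_{s-1}$; the second contributes $C(g_{2},M)\|D_{x}(\hat v^{2}-\hat v^{1})\|_{s-1}\le C(g_{2},M)\|\hat v^{2}-\hat v^{1}\|_{s}$ for $l\in\{1,3\}$ when measured in $H^{s-1}$, and $C(g_{2},M)\|D_{x}(V^{2}-V^{1})\|_{s-2}\le C(g_{2},M)\|V^{2}-V^{1}\|_{s-1}$ for $l=2$ when measured in $H^{s-2}$. Collecting all contributions and absorbing constants into $C=C(g_{2},M)$ yields the two displayed inequalities.

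The hard part will be the bookkeeping of derivative counts rather than any individual estimate: one has to verify that, across every term entering $\hat f_{1}$ and $\hat f_{3}$, the only place where an $H^{s}$-norm of the difference is needed is the $\hat v$-component, and that it arises solely from $D_{\xi}f_{l}\cdot D_{x}(\hat v^{2}-\hat v^{1})$, all the coefficient-difference-times-$\partial W^{1}$ pieces closing at $H^{s-1}$; and, for $\hat f_{2}$, that in the weaker norm $H^{s-2}$ every contribution — including $(B^{ij}_{0}(V^{2})-B^{ij}_{0}(V^{1}))\partial_{i}\partial_{j}\bar v^{1}$ and the $\bar v^{1}_{t}$ term — is dominated by $\|V^{2}-V^{1}\|_{s-1}$ with no $H^{s}$-norm of the difference required. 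A secondary technical point is invoking the correct multiplication theorem in each regime, namely \eqref{eq:32} in the form $\widehat{H}^{s}\times H^{r}\hookrightarrow H^{r}$ versus \eqref{eq:33}, together with the uniform-in-$r$ bound on each integrand so that $\int_{0}^{1}(\cdot)\,dr$ is dominated by the supremum over $r\in[0,1]$.
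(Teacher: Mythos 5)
Your overall strategy coincides with the paper's: write $\widehat{R}$ explicitly, estimate the coefficient differences through the mean-value representation of Lemma \ref{importantlemma}(i)--(ii) together with the product estimates \eqref{eq:32}--\eqref{eq:35}, and estimate the source differences through the chain rule, the $H^{s}$-norm of the difference entering only through $D_{x}(\hat v^{2}-\hat v^{1})$. The difference is in the decomposition itself, and it creates a genuine gap. The paper first multiplies each of the two equations by $(A_{j}^{0})^{-1}$ evaluated at its \emph{own} argument, subtracts, and then multiplies back by $A_{j}^{0}(V^{2})$; as a result $\widehat{R}$ contains only coefficient differences acting on \emph{spatial} derivatives of $W^{1}$ (e.g.\ $\partial_{i}\bar u^{1}$, $\partial_{i}\bar v^{1}$, $\partial_{i}\partial_{j}\bar v^{1}$), all of which are bounded pointwise in $t$ by $M$ because $\sup_{t}\|W^{1}(t)\|_{s}\leq M$. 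Your direct subtraction instead leaves the extra term $-\bigl(A^{0}(V^{2})-A^{0}(V^{1})\bigr)W^{1}_{t}$, and your justification for it — that $\bar u^{1}_{t},\bar w^{1}_{t}$ ``lie in $H^{s-1}$ with norm $\le M$'' (and implicitly $\bar v^{1}_{t}$ in $H^{s-2}$) — misreads the definition of $X_{T_{0}}^{s}(g_{2},M)$: estimate \eqref{eq:710} controls only $\int_{0}^{T_{0}}\|W^{1}_{t}(\tau)\|_{s-1}^{2}d\tau\leq M^{2}$, and \eqref{eq:569}--\eqref{eq:570} give continuity in time but no bound by a constant depending only on $(g_{2},M)$. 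This matters because the lemma is a pointwise-in-$t$ estimate with $C=C(g_{2},M)$, and that is exactly what Theorem \ref{lipschitz} needs in order to pull out the factor $T_{0}$ in front of $\sup_{t}\|(V^{2}-V^{1})(t)\|_{s-1}^{2}$ and later choose $\alpha_{0}=CT_{0}<\tfrac12$ in \eqref{eq:fibonacci}; an $L^{2}$-in-time substitute would destroy that structure.

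The gap is repairable, and the repair is essentially the paper's trick: either adopt the paper's normalized decomposition so that no $W^{1}_{t}$ appears in $\widehat{R}$, or keep your decomposition but use the equation satisfied by $W^{1}$ (with coefficients frozen at $V^{1}$) to rewrite $\bar u^{1}_{t}$, $\bar w^{1}_{t}$, $\bar v^{1}_{t}$ in terms of spatial derivatives and sources, which yields the pointwise bounds $\|\bar u^{1}_{t}(t)\|_{s-1},\|\bar w^{1}_{t}(t)\|_{s-1},\|\bar v^{1}_{t}(t)\|_{s-2}\leq C(g_{2},M)$; the two routes are algebraically equivalent. The remainder of your argument — uniform-in-$r$ bounds on $DL(V_{r})$, the choice of $\widehat{H}^{s}\times H^{r}\hookrightarrow H^{r}$ versus \eqref{eq:33} in each regime, and the bookkeeping that puts $\hat f_{2}$ in $H^{s-2}$ because of the $\partial_{i}\partial_{j}\bar v^{1}$ contribution — is consistent with the paper's proof.
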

\begin{proof}
We only prove the result for $\hat{f}_{1}$ and proceed in the same manner for $\hat{f}_{2}$ and $\hat{f}_{3}$. Observe that
\begin{align*}
\hat{f}_{1}=&A^{0}_{1}(V^{2})\left\lbrace A_{1}^{0}(V^{2})^{-1}f_{1}(V^{2};D_{x}\hat{v}^{2})-A_{1}^{0}(V^{1})^{-1}f_{1}(V^{1};D_{x}\hat{v}^{1})\right\rbrace\nonumber\\
+&A_{1}^{0}(V^{2})\left\lbrace A_{1}^{0}(V^{1})^{-1}A_{11}^{i}(V^{1})-A_{0}^{1}(V^{2})^{-1}A^{i}_{11}(V^{2}
)\right\rbrace\partial_{i}\bar{u}^{1}\nonumber\\
+&A_{1}^{0}(V^{2})\left\lbrace A_{1}^{0}(V^{1})^{-1}A_{12}^{i}(V^{1})-A_{0}^{1}(V^{2})^{-1}A^{i}_{12}(V^{2}
)\right\rbrace\partial_{i}\bar{v}^{1}.
\end{align*}
Define $H(V,D_{x}\hat{v}):=(A_{0}^{1})^{-1}(V)f_{1}(V,D_{x}\hat{v})$ and apply a similar argument to the one that led us to $(i)$ in Lemma \ref{importantlemma} to conclude that 
\begin{align*}
\|H(V^{2},D_{x}\hat{v}^{2})-H(V^{1},D_{x}\hat{v}^{1})\|_{s-1}\leq C^{\prime}\left(\|(\hat{u}^{2}-\hat{u}^{1},\hat{w}^{2}-\hat{w}^{1})\|_{s-1}+\|\hat{v}^{2}-\hat{v}^{1}\|_{s}\right),
\end{align*}
where $C^{\prime}$ is a positive constant depending on $g_{2}$ and $M$. Given that, $V^{2},W^{1}\in X_{T_{0}}^{s}(g_{2},M)$, we can apply theorem 3.1 to obtain the result for the first term in $\hat{f}_{1}$. The rest of the terms can be treated similarly.
\end{proof}
\begin{defi}
Define $Y$ as the vector space of functions $Z=(z_{1},z_{2},z_{3})^{\top}\in\mathbb{R}^{n}\times\mathbb{R}^{k}\times\mathbb{R}^{p}=\mathbb{R}^{N}$ such that 
\begin{align*}
Z\in\mathcal{C}([0,T_{0}];H^{s-1}),\quad z_{2}\in L^{2}(0,T_{0};H^{s}),\quad Z_{t}\in L^{2}(0,T_{0};H^{s-2}),
\end{align*}
with norm defined by
\[\|Z\|_{Y}^{2}:=\sup_{0\leq t\leq T_{0}}\|Z(t)\|_{s-1}^{2}+\int_{0}^{T_{0}}\|z_{2}(t)\|_{s}^{2}dt+\int_{0}^{T_{0}}\|Z_{t}(t)\|_{s-2}^{2}dt.\]
\end{defi}
\begin{rem}
Observe that $\left(Y,\|\cdot\|_{Y}\right)$ is a Banach space and $X\subset Y$. Nonetheless, $X$ is not a closed subset of $Y$.
\end{rem}
\begin{theo}
\label{lipschitz}
There is a constant $C=C(g_{2},M)>0$ such that 
\begin{align}
\|\mathcal{T}(V^{2})-\mathcal{T}(V^{1})\|_{Y}^{2}&\leq C\max(1,T_{0})\left(\|V^{2}-V^{1}\|_{Y}^{2}\right),\quad \mbox{for all}~ V^{1},V^{2}\in X.\label{eq:818}
\end{align}
That is, $\mathcal{T}:X\rightarrow X$ is Lipschitz continuous on its domain. Furthermore, 
\begin{equation}
\|\mathcal{T}^{2}(U^{2})-\mathcal{T}^{2}(U^{1})\|_{Y}\leq CT_{0}\left(\|\mathcal{T}(U^{2})-\mathcal{T}(U^{1})\|_{Y}+\|U^{2}-U^{1}\|_{Y}\right),
\label{eq:823}
\end{equation}
for all $U^{1},U^{2}\in X$.
\end{theo}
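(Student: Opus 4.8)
The plan is to obtain both inequalities from the $\delta$-independent energy estimate of Theorem~\ref{energydec}, applied at the level $m=s-1$ (the regularity recorded by $\|\cdot\|_{Y}$) to suitable difference systems with vanishing initial data. Fix $U^{1},U^{2}\in X$, put $V^{p}=\mathcal{T}(U^{p})$ and $W^{p}=\mathcal{T}(V^{p})=\mathcal{T}^{2}(U^{p})$, and set $\widetilde V=V^{2}-V^{1}$, $W=W^{2}-W^{1}$. Multiplying the two copies of \eqref{eq:81} solved by $V^{1},V^{2}$ by the respective $(A^{0})^{-1}$ to isolate the time derivative and then subtracting — exactly as in the proof of Lemma~\ref{nonlinearestimates} — shows that $\widetilde V$ solves a system of the form \eqref{eq:51}--\eqref{eq:53} with coefficients frozen at $U^{2}$, zero initial data, and a source $\widetilde R=(\widetilde f_{1},\widetilde f_{2},\widetilde f_{3})$ built only from $F(U^{2};D_{x}U^{2})-F(U^{1};D_{x}U^{1})$ and from differences of the coefficients evaluated at $U^{1}$ and $U^{2}$ multiplied by spatial derivatives of $V^{1}$; likewise $W$ solves \eqref{eq:86} with source $\widehat R=(\hat f_{1},\hat f_{2},\hat f_{3})$. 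Since $U^{2},V^{2}\in X$, the verification in Lemma~\ref{existencelemma} shows these frozen coefficients satisfy \textbf{I}--\textbf{II}, so Theorem~\ref{energydec} applies and yields $\|\widetilde V\|_{Y}^{2}\le C(g,M)\int_{0}^{T_{0}}\mathcal{F}_{s-1}^{2}(\widetilde f_{1},\widetilde f_{2},\widetilde f_{3})$ and $\|W\|_{Y}^{2}\le C(g,M)\int_{0}^{T_{0}}\mathcal{F}_{s-1}^{2}(\hat f_{1},\hat f_{2},\hat f_{3})$, the constant being finite by \eqref{eq:720}.

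Next I would record the nonlinear source bounds. Lemma~\ref{nonlinearestimates} gives, pointwise in $t$, $\|\hat f_{l}\|_{s-1}\le C(\|\hat u^{2}-\hat u^{1}\|_{s-1}+\|\hat v^{2}-\hat v^{1}\|_{s}+\|\hat w^{2}-\hat w^{1}\|_{s-1})$ for $l=1,3$ and $\|\hat f_{2}\|_{s-2}\le C\|V^{2}-V^{1}\|_{s-1}$; the identical computation performed one level down — with $U^{p}$ in place of $V^{p}$ and $V^{p}=\mathcal{T}(U^{p})$ in place of $W^{p}$, using that the spatial derivatives of $V^{1}$ (resp.\ $W^{1}$) in the source are bounded pointwise in $H^{s-1}$ by $M$ because these iterates lie in $X$ — gives the same bounds for $\widetilde R$ with $U^{2}-U^{1}$ in place of $V^{2}-V^{1}$. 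Integrating in time and using $\int_{0}^{T_{0}}\|U^{2}-U^{1}\|_{s-1}^{2}\le T_{0}\sup_{[0,T_{0}]}\|U^{2}-U^{1}\|_{s-1}^{2}$ together with $\int_{0}^{T_{0}}\|v^{2}-v^{1}\|_{s}^{2}\le\|U^{2}-U^{1}\|_{Y}^{2}$ gives $\int_{0}^{T_{0}}\mathcal{F}_{s-1}^{2}(\widetilde f_{1},\widetilde f_{2},\widetilde f_{3})\le C\max(1,T_{0})\|U^{2}-U^{1}\|_{Y}^{2}$, which proves \eqref{eq:818}. The crucial feature is the lack of a factor $T_{0}$ in front of $\int_{0}^{T_{0}}\|v^{2}-v^{1}\|_{s}^{2}$ — the high parabolic norm carried by the hyperbolic sources $\widetilde f_{1},\widetilde f_{3}$ — which is exactly the obstruction discussed in Section~\ref{couvsuncou}.

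For \eqref{eq:823} the new ingredient is the improved bound $\int_{0}^{T_{0}}\|\hat v^{2}-\hat v^{1}\|_{s}^{2}\le CT_{0}\|U^{2}-U^{1}\|_{Y}^{2}$ for the parabolic component of $\widetilde V=\mathcal{T}(U^{2})-\mathcal{T}(U^{1})$, in contrast with the $O(1)$ bound that the $Y$-norm of $\widetilde V$ alone provides. This cannot be read off \eqref{eq:543} as a black box, since there $\int_{0}^{T_{0}}\|\widetilde v\|_{s}^{2}$ would inherit the whole of $\int_{0}^{T_{0}}\mathcal{F}_{s-1}^{2}(\widetilde f_{1},\widetilde f_{2},\widetilde f_{3})$ and hence the $T_{0}$-free term $\int_{0}^{T_{0}}\|v^{2}-v^{1}\|_{s}^{2}$. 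Instead I would reopen the parabolic part of the Appendix~B estimate: by the ellipticity of $B_{0}^{ij}$ (assumption \textbf{D}), rewriting the coupling terms $A_{21}^{i}\partial_{i}\widetilde u$, $A_{23}^{i}\partial_{i}\widetilde w$ in the equation for $\widetilde v$ by one integration by parts and applying Cauchy's weighted inequality, every $\|\widetilde v\|_{s}$-contribution is absorbed by the coercive term and one is left with $\int_{0}^{T_{0}}\|\widetilde v\|_{s}^{2}\le C(g,M)\bigl[\int_{0}^{T_{0}}\|\widetilde f_{2}\|_{s-2}^{2}+\int_{0}^{T_{0}}\|\widetilde V\|_{s-1}^{2}\bigr]$. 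Since $\widetilde f_{2}$ only sees the $(s-1)$-norm of $U^{2}-U^{1}$, the first term is $\le CT_{0}\|U^{2}-U^{1}\|_{Y}^{2}$, and the second is $\le T_{0}\|\widetilde V\|_{Y}^{2}\le CT_{0}\|U^{2}-U^{1}\|_{Y}^{2}$ by \eqref{eq:818} (with $T_{0}\le T$); the improved bound follows.

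Finally I would feed this back into the estimate for $W$. By Lemma~\ref{nonlinearestimates}, $\int_{0}^{T_{0}}\|\hat f_{2}\|_{s-2}^{2}\le CT_{0}\sup_{[0,T_{0}]}\|V^{2}-V^{1}\|_{s-1}^{2}\le CT_{0}\|\mathcal{T}(U^{2})-\mathcal{T}(U^{1})\|_{Y}^{2}$, while for $l=1,3$, $\int_{0}^{T_{0}}\|\hat f_{l}\|_{s-1}^{2}\le CT_{0}\sup_{[0,T_{0}]}\|V^{2}-V^{1}\|_{s-1}^{2}+C\int_{0}^{T_{0}}\|\hat v^{2}-\hat v^{1}\|_{s}^{2}\le CT_{0}\bigl(\|\mathcal{T}(U^{2})-\mathcal{T}(U^{1})\|_{Y}^{2}+\|U^{2}-U^{1}\|_{Y}^{2}\bigr)$, the middle term being bounded by the improved estimate of the third paragraph. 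Hence $\int_{0}^{T_{0}}\mathcal{F}_{s-1}^{2}(\hat f_{1},\hat f_{2},\hat f_{3})\le CT_{0}\bigl(\|\mathcal{T}(U^{2})-\mathcal{T}(U^{1})\|_{Y}+\|U^{2}-U^{1}\|_{Y}\bigr)^{2}$, and inserting this into $\|W\|_{Y}^{2}\le C(g,M)\int_{0}^{T_{0}}\mathcal{F}_{s-1}^{2}(\hat f_{1},\hat f_{2},\hat f_{3})$ and taking square roots gives \eqref{eq:823}. I expect the improved parabolic estimate of the third paragraph to be the main obstacle: one is forced to revisit the proof of the energy estimate and exploit the precise way the high parabolic norm propagates through the coupled system, since a direct appeal to \eqref{eq:543} loses exactly the power of $T_{0}$ that makes the Fibonacci contraction of Lemma~\ref{optimal} usable.
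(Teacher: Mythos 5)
Your proposal is correct and follows essentially the same route as the paper: the energy estimate \eqref{eq:543} with $m=s-1$ and zero initial data combined with the source bounds of Lemma \ref{nonlinearestimates} gives \eqref{eq:818}, and the factor $T_{0}$ in \eqref{eq:823} is recovered exactly as in the paper by estimating the parabolic component of $\mathcal{T}(U^{2})-\mathcal{T}(U^{1})$ separately, so that its source sees only $H^{s-1}$/$H^{s-2}$ norms controlled by sup-in-time quantities. The only (immaterial) difference is mechanical: the paper freezes the coefficients at $U^{2}$, moves the coupling terms $A_{21}^{i}\partial_{i}\hat{u}$, $A_{23}^{i}\partial_{i}\hat{w}$ into the source $\hat{g}$ of the stand-alone parabolic equation \eqref{eq:819} and invokes Theorem \ref{parabolicestimate}, whereas you re-run the coupled energy estimate of Appendix B and absorb these terms via Cauchy's weighted inequality, which amounts to the same computation.
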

\begin{proof}
Let $V^{p}=(\hat{u}^{p},\hat{v}^{p},\hat{w}^{p})\in X$ and set $\mathcal{T}(V^{p}):=(\bar{u}^{p},\bar{v}^{p},\bar{w}^{p})\in X$ for $p=1,2$. Then, by Theorem \ref{energydec}, $\mathcal{T}(V^{2})-\mathcal{T}(V^{1})$ is the unique solution of \eqref{eq:86}. Consequently, estimate \eqref{eq:543} holds true with $m=s-1$ for all $t\in[0,T_{0}]$ and by taking into account \eqref{eq:720}, \eqref{eq:725} and Lemma \ref{nonlinearestimates}, we have that
\begin{align}
&\sup_{0\leq t\leq T_{0}}\|\left(\mathcal{T}(V^{2})-\mathcal{T}(V^{1})\right)(t)\|_{s-1}^{2}+\int_{0}^{T_{0}}\|(\bar{v}^{2}-\bar{v}^{1})(\tau)\|_{s}^{2}d\tau\nonumber\\
&+\int_{0}^{t}\|\partial_{t}\mathcal{T}(V^{2})-\partial_{t}\mathcal{T}(V^{1})\|_{s-2}^{2}d\tau\nonumber\\
&\leq C_{1}e^{C_{2}\left(T_{0}+M T^{1/2}_{0}\right)}\left\lbrace\int_{0}^{T_{0}}\mathcal{F}^{s-1}(\hat{f}_{1}(t),\hat{f}_{2}(t),\hat{f}_{3}(t))dt\right\rbrace\nonumber\\
&\leq 2C_{1}\int_{0}^{T_{0}}\|\hat{f}_{1}(\tau)\|_{s-1}^{2}+\|\hat{f}_{2}(\tau)\|_{s-2}^{2}+\|\hat{f}_{3}(\tau)\|_{s-1}^{2}d\tau\nonumber\\
&\leq C\left(\int_{0}^{T_{0}}\|(V^{2}-V^{1})(\tau)\|_{s-1}^{2}d\tau+\int_{0}^{T_{0}}\|(\hat{v}^{2}-\hat{v^{1}})(\tau)\|_{s}^{2}d\tau\right)\nonumber\\
&\leq C\left(T_{0}\sup_{0\leq t\leq T_{0}}\|(V^{2}-V^{1})(t)\|_{s-1}^{2}+\int_{0}^{T_{0}}\|(\hat{v}^{2}-\hat{v}^{1})(\tau)\|_{s}^{2}d\tau\right).\label{eq:816}
\end{align}
After recalling the definition of $\|\cdot\|_{Y}$,  we can assert that \eqref{eq:818} follows by \eqref{eq:816}.
Now let $U^{1},U^{2}\in X$ and set $V^{p}:=\mathcal{T}(U^{p})$ for $p=1,2$. Then, by \eqref{eq:816}, 
\begin{align}
&\|\mathcal{T}^{2}(U^{2})-\mathcal{T}^{2}(U^{1})\|_{s-1}^{2}=\|\mathcal{T}(V^{2})-\mathcal{T}(V^{1})\|_{s-1}^{2}\nonumber\\
&\leq C\left(T_{0}\sup_{0\leq t\leq T_{0}}\|\mathcal{T}(U^{2})-\mathcal{T}(U^{1})\|_{s-1}^{2}+\int_{0}^{T_{0}}\|(\hat{v}^{2}-\hat{v}^{1})(\tau)\|_{s}^{2}d\tau\right).\label{eq:prefp}
\end{align}
Since $V=V^{2}-V^{1}$ is the unique solution of \eqref{eq:86} with $U^{2}$ instead of $V^{2}$, we can assure that $\hat{v}:=\hat{v}^{2}-\hat{v}^{1}$ satisfies the following parabolic equation
\begin{equation}
A_{2}^{0}(U^{2})\hat{v}_{t}+A_{22}^{i}(U^{2})\partial_{i}\hat{v}-B_{0}^{ij}(U^{2})\partial_{i}\partial_{j}\hat{v}=\hat{g},
\label{eq:819}
\end{equation}
where $\hat{g}:=f_{2}^{\prime}-A^{i}_{21}(U^{2})\partial_{i}\hat{u}-A^{i}_{23}(U^{2})\partial_{i}\hat{w}$, $\hat{u}=\hat{u}^{2}-\hat{u}^{1}$, $\hat{w}=\hat{w}^{2}-\hat{w}^{1}$ and 
\begin{align}
f_{2}^{\prime}&=A_{2}^{0}(U^{2})\left\lbrace A_{2}^{0}(U^{2})^{-1}f_{2}(U^{2},D_{x}U^{2})-A_{2}^{0}(U^{1})^{-1}f_{2}(U^{1},D_{x}U^{1})\right\rbrace\nonumber\\
&-A_{2}^{0}(U^{2})\left\lbrace A_{2}^{0}(U^{2})^{-1}A_{21}^{i}(U^{2})-A_{2}^{0}(U^{1})^{-1}A_{21}^{i}(U^{1})\right\rbrace\partial_{i}\hat{u}^{1}\nonumber\\
&-A_{2}^{0}(U^{2})\left\lbrace A_{2}^{0}(U^{2})^{-1}A_{22}^{i}(U^{2})-A_{2}^{0}(U^{1})^{-1}A_{22}^{i}(U^{1})\right\rbrace\partial_{i}\hat{v}^{1}\nonumber\\
&-A_{2}^{0}(U^{2})\left\lbrace A_{2}^{0}(U^{2})^{-1}A_{23}^{i}(U^{2})-A_{2}^{0}(U^{1})^{-1}A_{23}^{i}(U^{1})\right\rbrace\partial_{i}\hat{w}^{1}\nonumber\\
&+A_{2}^{0}(U^{2})\left\lbrace A_{2}^{0}(U^{2})^{-1}B_{0}^{ij}(U^{2})-A_{2}^{0}(U^{1})^{-1}B_{0}^{ij}(U^{1})\right\rbrace\partial_{i}\partial_{j}\hat{v}^{1}.\label{eq:820}
\end{align}
Hence, $\hat{g}\in\mathcal{C}\left([0,T];H^{s-2}\right)$. Then, $\hat{v}$ satisfies the estimate \eqref{eq:441} with $m=s-1$ for all $t\in[0,T_{0}]$ and thus, 
\begin{equation*}
\int_{0}^{T_{0}}\|\hat{v}(\tau)\|_{s}^{2}d\tau\leq \Psi_{0}^{2}(T_{0})\int_{0}^{T_{0}}\|\hat{g}(\tau)\|_{s-2}^{2}d\tau.
\end{equation*}
Observe that, by Lemma \ref{existencelemma} and \eqref{eq:725}, $\Psi_{0}^{2}(T_{0})\leq \Phi_{1}^{2}(T_{0})\leq 2$, and by Lemma \ref{nonlinearestimates} applied to \eqref{eq:820} we have that,
\begin{equation*}
\|\hat{g}(t)\|_{s-2}^{2}\leq C\sup_{0\leq t\leq T_{0}}\left(\|(V^{2}-V^{1})(t)\|_{s-1}^{2}+\|(U^{2}-U^{1})(t)\|_{s-1}^{2}\right)~\forall t\in[0,T_{0}]
\end{equation*}
for some positive constant $C=C(g_{2},M)$. Therefore, 
\begin{equation*}
\int_{0}^{T_{0}}\|\hat{v}(\tau)\|_{s}^{2}d\tau\leq CT_{0}\left(\sup_{0\leq t\leq T_{0}}\|(V^{2}-V^{1})(t)\|_{s-1}^{2}+\sup_{0\leq t\leq T_{0}}\|(U^{2}-U^{1})(t)\|_{s-1}^{2}\right).
\end{equation*}
If we apply this inequality in \eqref{eq:prefp} we conclude that $\mathcal{T}$ satisfies \eqref{eq:823} for all $U^{1},U^{2}\in X$.
\end{proof}
\begin{rem}
Observe that the Lipschitz constant in \eqref{eq:818} is always greater or equal than one, no matter how small $T_{0}$ is taken. On the other hand, if additionally to \eqref{eq:725} and \eqref{eq:728} we take $T_{0}$ small enough so that 
\begin{align}
0<\alpha_{0}:=CT_{0}<\tfrac{1}{2}\label{eq:fibonacci},
\end{align}
we can assure that $\mathcal{T}$ satisfies \eqref{eq:25} in Theorem \ref{fixedpo}.
\end{rem}
\section{Existence of the extension}
\label{Extensionsection}
In this section we show the existence of the extension $\widehat{\mathcal{T}}$ of $\mathcal{T}$ as described in Theorem \ref{fixedpo}. To begin with, notice that, given the Banach space structure of $Y$, if $\left\lbrace U^{k}\right\rbrace\subset X$ is a Cauchy sequence in $Y$ then, there is $U=(u,v,w)\in\mathcal{C}([0,T_{0}];H^{s-1})$, $v_{0}\in L^{2}(0,T_{0};H^{s})$ and $(u_{1},v_{1},w_{1})\in L^{2}(0,T_{0},H^{s-2})$ such that 
\begin{align}
(u^{k},v^{k},w^{k})&\rightarrow (u,v,w)~\mbox{in}~\mathcal{C}([0,T_{0}];H^{s-1}),\label{eq:91}\\
v^{k}&\rightarrow v_{0}~\mbox{in}~L^{2}(0,T_{0};H^{s}),\label{eq:92}\\
(u_{t}^{k},v_{t}^{k},w_{t}^{k})&\rightarrow (u_{1},v_{1},w_{1})~\mbox{in}~L^{2}(0,T_{0};H^{s-2}),\label{eq:93}
\end{align}
where $v_{0}=v$ and $(u^{1},v^{1},w^{1})=(u_{t},v_{t},w_{t})$. Consider the following result.
\begin{lema}
\label{X0}
Let $s\in\mathbb{N}$ such that $s\geq s_{0}+1$, $s_{0}=\left[\frac{d}{2}\right]+1$. Define $X_{0}$ as the set of all functions $V=(v_{1},v_{2},v_{3})^{\top}$ of $(x,t)\in Q_{T_{0}}$ \textup{(}$v_{1}\in\mathbb{R}^{n}$, $v_{2}\in\mathbb{R}^{k}$ and $v_{3}\in\mathbb{R}^{p}$\textup{)} such that
\begin{equation}
\label{eq:97}
\begin{aligned}
V\in L^{\infty}(0,T_{0};H^{s}),&\quad v_{2}\in L^{2}(0,T_{0};H^{s+1}),\\
V_{t}\in L^{2}(0,T_{0};H^{s-1}),&\quad V(x,t)\in\mathcal{O}_{g_{2}}\quad\forall(x,t)\in Q_{T_{0}},
\end{aligned}
\end{equation}
and the estimate 
\begin{equation}
\sup_{0\leq\tau\leq t}\|V(\tau)\|_{s}^{2}+\int_{0}^{t}\|v_{2}(\tau)\|_{s+1}^{2}d\tau+
+\int_{0}^{t}\|V_{t}(\tau)\|_{s-1}^{2}d\tau\leq M^{2}
\label{eq:98}
\end{equation}
are satisfied. Then, $\overline{X}\subset X_{0}$.
\end{lema}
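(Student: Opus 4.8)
The plan is to take an arbitrary $V \in \overline{X}$ and produce a sequence $\{U^k\} \subset X = X_{T_0}^s(g_2,M)$ converging to $V$ in $Y$, then transfer the three defining features of $X_0$ — the regularity \eqref{eq:97}, the pointwise confinement to $\mathcal{O}_{g_2}$, and the energy bound \eqref{eq:98} — from the sequence to the limit by the usual weak/weak-$*$ compactness argument. Concretely, since each $U^k \in X$, estimate \eqref{eq:710} gives a uniform bound on $\sup_{\tau}\|U^k(\tau)\|_s^2 + \int_0^{T_0}\|v_2^k\|_{s+1}^2 + \int_0^{T_0}\|U_t^k\|_{s-1}^2$ by $M^2$. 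Hence, passing to a subsequence, $U^k \overset{*}{\rightharpoonup} \tilde{U}$ in $L^\infty(0,T_0;H^s)$, $v_2^k \rightharpoonup \tilde{v}_2$ in $L^2(0,T_0;H^{s+1})$, and $U_t^k \rightharpoonup \tilde{U}_t$ in $L^2(0,T_0;H^{s-1})$, with $\tilde{U}_t$ being the weak time-derivative of $\tilde{U}$ (same argument as in the proof of Theorem \ref{wplinearcoupling} using Definition 3.3). By weak lower semicontinuity of the norms, $\tilde{U}$ satisfies \eqref{eq:98}, so the regularity \eqref{eq:97} holds for $\tilde{U}$ as well.

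The first point to pin down is that the weak limit $\tilde{U}$ coincides with the strong $Y$-limit $V$. This follows because convergence in $Y$, i.e.\ in $\mathcal{C}([0,T_0];H^{s-1})$ (together with $L^2$ convergence of the second component in $H^s$ and of the time derivative in $H^{s-2}$), already forces $U^k \to V$ in, say, $\mathcal{D}'(Q_{T_0})$, and weak-$*$ limits in the stronger topologies must agree with distributional limits. Thus $V = \tilde{U}$ and $V$ inherits \eqref{eq:97} and \eqref{eq:98}.

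The genuinely delicate step is the pointwise confinement $V(x,t) \in \mathcal{O}_{g_2}$ for all $(x,t) \in Q_{T_0}$. Here one uses that $X$ is built on $\mathcal{O}_{g_2}$, which is a \emph{bounded} open convex set whose closure lies in $\mathcal{O}$, so each $U^k$ actually takes values in the compact convex set $\overline{\mathcal{O}_{g_2}}$. Since $U^k \to V$ in $\mathcal{C}([0,T_0];H^{s-1})$ and $s-1 \ge s_0 = [d/2]+1 > d/2$, Sobolev embedding gives $U^k \to V$ in $\mathcal{C}([0,T_0];L^\infty(\mathbb{R}^d)) = \mathcal{C}(\overline{Q_{T_0}})$ (after modification on a null set; note $V \in \mathcal{C}([0,T_0];H^{s-1})$ so $V$ has a continuous representative on $Q_{T_0}$). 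Uniform convergence of continuous functions valued in the closed set $\overline{\mathcal{O}_{g_2}}$ yields $V(x,t) \in \overline{\mathcal{O}_{g_2}}$. Strictly speaking this gives confinement in the closure rather than the open set $\mathcal{O}_{g_2}$; I would resolve this exactly as in the parent construction — since $g_2 < g_1 = d(\mathcal{O}_{g_0},\partial\mathcal{O})$ was chosen with room to spare, one may either enlarge $g_2$ slightly in the definition of $X_0$'s target neighborhood, or observe that what is actually needed downstream (Lemma \ref{importantlemma} applied on $X_0$, and Theorem \ref{energydec}) is only that the values lie in a compact subset of $\mathcal{O}$, which $\overline{\mathcal{O}_{g_2}} \subset \mathcal{O}$ already provides. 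I expect this boundary bookkeeping — ensuring the limit stays in an \emph{open} convex set with closure in $\mathcal{O}$ rather than merely in $\overline{\mathcal{O}_{g_2}}$ — to be the main obstacle, and the cleanest fix is to phrase \eqref{eq:97} with a marginally larger neighborhood $\mathcal{O}_{g_2'}$, $g_2 < g_2' < g_1$, or to note that $\mathcal{O}_{g_2}$ can be taken to be the $g_2$-neighborhood of $\mathcal{O}_{g_0}$ whose closure is the $g_2$-closed-neighborhood, still compactly inside $\mathcal{O}$. With that dispensed with, collecting \eqref{eq:97}, \eqref{eq:98}, and the confinement shows $V \in X_0$, and since $V \in \overline{X}$ was arbitrary, $\overline{X} \subset X_0$.
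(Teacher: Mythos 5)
Your compactness core is the same as the paper's: from the uniform bound \eqref{eq:710} you extract weak-$*$ limits in $L^{\infty}(0,T_{0};H^{s})$, weak limits in $L^{2}(0,T_{0};H^{s+1})$ and $L^{2}(0,T_{0};H^{s-1})$, identify them with the strong $Y$-limit, and conclude \eqref{eq:97}--\eqref{eq:98} by lower semicontinuity. (The paper identifies the limits via pointwise-in-$t$ weak $H^{s}$ convergence plus a Riesz-representation argument rather than your distributional-uniqueness shortcut; same substance. One small point: to obtain the genuine $\sup_{0\leq\tau\leq t}\|V(\tau)\|_{s}^{2}$ in \eqref{eq:98}, and not merely an essential supremum, argue for each fixed $t$ from the boundedness of $U^{k}(t)$ in $H^{s}$ together with the strong convergence \eqref{eq:91} in $H^{s-1}$, as the paper does.)

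The genuine gap is exactly the step you flag: the pointwise confinement $V(x,t)\in\mathcal{O}_{g_{2}}$. Passing the inclusion $U^{k}(x,t)\in\mathcal{O}_{g_{2}}$ to the uniform limit can only give values in $\overline{\mathcal{O}_{g_{2}}}$, and your proposed remedy is to weaken or rephrase the lemma, which does not prove the statement as written. The paper does not transfer the pointwise constraint from the sequence at all; it re-derives it for the limit from the energy bound, via \eqref{eq:727}--\eqref{eq:728}: since the limit satisfies $\int_{0}^{T_{0}}\|V_{t}(\tau)\|_{s-1}^{2}d\tau\leq M^{2}$ and attains the datum $U_{0}$ at $t=0$ (as all the relevant members of $X$, the iterates of $\mathcal{T}$, do), one gets $|V(x,t)-U_{0}(x)|\leq\kappa_{s-1}T_{0}^{1/2}M\leq g_{2}$; and because $U_{0}(x)$ lies in the open set $\mathcal{O}_{g_{0}}$ while $\mathcal{O}_{g_{2}}$ is, by \eqref{eq:722}, the open $g_{2}$-neighborhood of $\mathcal{O}_{g_{0}}$, the distance from $V(x,t)$ to $\mathcal{O}_{g_{0}}$ is strictly smaller than $g_{2}$ (slide slightly toward $U_{0}(x)$ inside $\mathcal{O}_{g_{0}}$), so $V(x,t)\in\mathcal{O}_{g_{2}}$ itself. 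Thus no enlargement of the neighborhood is needed; the missing ingredient in your write-up is the use of the time-integral control of $V_{t}$ together with the $g_{2}$-margin around the initial data, i.e. \eqref{eq:727}--\eqref{eq:728}, in place of passing set membership to the limit.
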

\begin{proof}
Let $\left\lbrace U^{k}\right\rbrace\subset X$ such that $U^{k}\rightarrow U$ in $Y$. From the definition of $X$ it follows that
\begin{equation}
\sup_{0\leq\tau\leq t}\|U^{k}(\tau)\|_{s}^{2}+\int_{0}^{t}\|v^{k}(\tau)\|_{s+1}^{2}d\tau+
+\int_{0}^{t}\|U_{t}^{k}(\tau)\|_{s-1}^{2}d\tau\leq M^{2}\quad\forall k\in\mathbb{N}.
\label{eq:99}
\end{equation}
Then, in particular, for each $t\in[0,T_{0}]$, the sequence $\left\lbrace U^{k}(t)\right\rbrace$ is bounded in $H^{s}$, implying the existence of $U^{t}\in H^{s}$ and weak convergent subsequence $\left\lbrace U^{j}(t)\right\rbrace$ (where $j$ depends on $t$) such that
\begin{equation}
U^{j}(t)\rightharpoonup U^{t}\quad\mbox{in}\quad H^{s}.
\label{eq:910}
\end{equation}
Let us show that $U^{t}=U(t)$ for all $t\in[0,T_{0}]$. For $h\in L^{2}$ the map $g\mapsto\lambda_{h}(g):=\int_{\mathbb{R}^{d}}(h,g)_{\mathbb{R}^{N}}dx$, is linear and continuous on $H^{s}$, hence, by the Riesz representation Lemma, for each $h\in L^{2}$, there is a unique $\psi=\psi(h)\in H^{s}$ such that, 
\begin{align*}
\int_{\mathbb{R}^{d}}(h,U^{t})_{\mathbb{R}^{N}}dx&=\lambda_{h}(U^{t})=\langle\psi(h),U^{t}\rangle_{s}=\lim_{j\rightarrow\infty}\langle\psi(h),U^{j}(t)\rangle_{s}\\
&=\lim_{j\rightarrow\infty}\lambda_{h}(U^{j}(t))=\lim_{j\rightarrow\infty}\int_{\mathbb{R}^{d}}(h,U^{j}(t))_{\mathbb{R}^{N}}dx=\int_{\mathbb{R}^{d}}(h,U(t))_{\mathbb{R}^{N}}dx,
\end{align*}
where the last equality follows from \eqref{eq:91}. Thus, $U^{t}=U(t)$ for all $t\in[0,T_{0}]$. Now, since $\left\lbrace U^{k}\right\rbrace\subset X$ there is a subsequence $\left\lbrace U^{j}\right\rbrace \subseteq \left\lbrace U^{k}\right\rbrace$, $U^{0}=(u^{0},v^{0},v^{0})\in L^{\infty}(0,T_{0};H^{s})$ and $U_{1}\in L^{2}(0,T_{0};H^{s-1})$ such that $v^{0}\in L^{2}(0,T_{0};H^{s+1})$ and 
\begin{equation}
\label{eq:911}
\begin{aligned}
U^{j}\overset{\ast}{\rightharpoonup} U^{0}~\mbox{in}~ L^{\infty}(0,T_{0};H^{s}),&\quad v^{j}\rightharpoonup v^{0}~\mbox{in}~ L^{2}(0,T_{0};H^{s+1}),\\
U_{t}^{j}\rightharpoonup U_{1}~\mbox{in}~&L^{2}(0,T_{0};H^{s-1}).
\end{aligned}
\end{equation}
By \eqref{eq:91}-\eqref{eq:93} and the uniqueness of the weak limit, it follows that, $U^{0}=U$ and $U_{1}=U_{t}^{0}$. As consequence of \eqref{eq:99} and the weak convergence stated in \eqref{eq:910} and \eqref{eq:911}, for each $t\in[0,T_{0}]$, there is a subsequence $\left\lbrace U^{j}\right\rbrace \subseteq \left\lbrace U^{k}\right\rbrace$ such that
\begin{align*}
&\|U(t)\|_{s}^{2}+\int_{0}^{T_{0}}\|v(\tau)\|_{s+1}^{2}d\tau+
+\int_{0}^{T_{0}}\|U_{t}(\tau)\|_{s-1}^{2}d\tau\leq\\
&\liminf_{k\rightarrow\infty}\left(\|U^{k}(t)\|_{s}^{2}+\int_{0}^{T_{0}}\|v^{k}(\tau)\|_{s+1}^{2}d\tau+\int_{0}^{T_{0}}\|U_{t}^{k}(\tau)\|_{s-1}^{2}d\tau\right)\leq M^{2}.
\end{align*}
Lastly, since $T_{0}$ satisfies \eqref{eq:728}, it follows by \eqref{eq:727} that $V(x,t)\in\mathcal{O}_{g_{2}}$ for all $(x,t)\in Q_{T_{0}}$. Thus, $\overline{X}\subseteq X_{0}$.
\end{proof}
\begin{theo}
\label{extension}
Let $X=X_{T_{0}}^{s}(g_{2},M)$, $\mathcal{T}: X\rightarrow X$ the operator defined in \eqref{eq:81}-\eqref{eq:82} and let $X_{\infty}$ together with $\widehat{\mathcal{T}}$ be defined as in $(i)$ of Theorem \ref{fixedpo}. Then, the map $\widehat{\mathcal{T}}:X_{\infty}\rightarrow X_{\infty}$ is a well-defined extension of $\mathcal{T}$.
\end{theo}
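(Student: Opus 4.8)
The plan is to deduce Theorem \ref{extension} from Theorem \ref{fixedpo} by checking that its two hypotheses hold in the present setting. Hypothesis $(ii)$ — the inequality \eqref{eq:25} — is already available: it is precisely \eqref{eq:823} in Theorem \ref{lipschitz}, valid once $T_0$ is taken small enough that \eqref{eq:fibonacci} holds, i.e. $\alpha_0:=CT_0<\tfrac12$. Thus the whole matter reduces to verifying hypothesis $(i)$, namely that the vector $V$ attached to $U\in X_\infty$ through \eqref{eq:24} does not depend on the approximating sequence. The tool for this is the Lipschitz bound \eqref{eq:818} of Theorem \ref{lipschitz}: $\mathcal{T}:X\to X$ is Lipschitz in the norm $\|\cdot\|_Y$ with a constant $L=C\max(1,T_0)$ depending only on $g_2$ and $M$.

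For well-definedness, let $U\in X_\infty$ and let $\{U^{k}\},\{\widetilde{U}^{k}\}\subset X$ both converge to $U$ in $Y$, with $\mathcal{T}(U^{k})\to V$ and $\mathcal{T}(\widetilde{U}^{k})\to\widetilde{V}$ in $Y$. Then \eqref{eq:818} gives $\|\mathcal{T}(U^{k})-\mathcal{T}(\widetilde{U}^{k})\|_{Y}\le L\,\|U^{k}-\widetilde{U}^{k}\|_{Y}\to 0$, and letting $k\to\infty$ yields $V=\widetilde{V}$. Hence $\widehat{\mathcal{T}}(U):=V$ is well-defined and hypothesis $(i)$ holds. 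In fact, since $Y$ is complete and $\mathcal{T}$ is Lipschitz, for any $U\in\overline{X}$ and any sequence $\{U^{k}\}\subset X$ with $U^{k}\to U$ the images $\{\mathcal{T}(U^{k})\}$ form a Cauchy sequence in $Y$ and therefore converge; this shows $X_\infty=\overline{X}$ and identifies $\widehat{\mathcal{T}}$ as the unique continuous extension of $\mathcal{T}$ to $\overline{X}$.

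It remains to note that $\widehat{\mathcal{T}}$ extends $\mathcal{T}$ and maps $X_\infty$ into itself. For $U\in X$ the constant sequence $U^{k}\equiv U$ realizes \eqref{eq:24} with $V=\mathcal{T}(U)$, so $\widehat{\mathcal{T}}(U)=\mathcal{T}(U)$. The inclusion $\widehat{\mathcal{T}}(X_\infty)\subseteq X_\infty$ is then exactly Step~1 of the proof of Theorem \ref{fixedpo}, whose hypotheses $(i)$ and $(ii)$ we have just verified; alternatively, if $V=\widehat{\mathcal{T}}(U)$ is realized by $U^{k}\to U$ with $V^{k}:=\mathcal{T}(U^{k})\to V$, then $\{V^{k}\}\subset X$ is Cauchy in $Y$, so \eqref{eq:818} makes $\{\mathcal{T}(V^{k})\}$ Cauchy and hence convergent to some $V^{\ast}\in Y$, giving $V\in X_\infty$. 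Combining these observations proves the theorem.

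There is no serious analytic obstacle at this stage: the real work is concentrated in the energy estimate of Theorem \ref{energydec}, Lemma \ref{nonlinearestimates}, and the resulting low-norm Lipschitz bound \eqref{eq:818}, all of which are already in hand. The single delicate point is the independence of $\widehat{\mathcal{T}}(U)$ from the chosen Cauchy sequence in $X$; this is exactly what uniform Lipschitz continuity in the low norm guarantees, and it is also what forces the extension to take values back in $X_\infty$ rather than merely in $Y$.
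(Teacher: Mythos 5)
Your proof is correct, but it follows a genuinely different route from the paper. You establish well-definedness of $\widehat{\mathcal{T}}$ purely metrically, from the low-norm Lipschitz bound \eqref{eq:818}: two sequences in $X$ converging to the same $U$ have images whose $Y$-distance tends to zero, so the limits coincide; the same bound (or Step~1 of Theorem \ref{fixedpo}, once \eqref{eq:823} and \eqref{eq:fibonacci} are in place) gives $\widehat{\mathcal{T}}(X_\infty)\subseteq X_\infty$, and the constant-sequence observation shows $\widehat{\mathcal{T}}$ extends $\mathcal{T}$. The paper instead argues at the PDE level: it passes to the limit in the linearized system using \eqref{eq:91}--\eqref{eq:93}, concludes that both candidate limits $V$ and $\widehat V$ solve the Cauchy problem \eqref{eq:81} with coefficients evaluated at the limit $U$, invokes Lemma \ref{X0} ($\overline{X}\subset X_0$) to secure the regularity \eqref{eq:lowreg} for $V-\widehat V$, and then gets $V=\widehat V$ from the uniqueness contained in the energy estimate of Theorem \ref{energydec}. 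Your argument is shorter and needs no compactness or limit-passing in the equation; what the paper's argument buys in exchange is the extra information that $\widehat{\mathcal{T}}(U)$ actually solves the linear problem \eqref{eq:81} associated with $U$, a fact the paper relies on afterwards (in Theorem \ref{surjective}, to obtain the gain of regularity of the fixed point via the mollified Cauchy problems, and ultimately to identify the fixed point as a solution of the quasilinear system). So if one adopted your proof verbatim, that PDE characterization of $\widehat{\mathcal{T}}$ would have to be supplied separately later; your route is also consistent with the paper's own Remark \ref{similarlinear}, which already exploits the Lipschitz continuity of $\mathcal{T}$ to conclude $X_\infty=\overline{X}$.
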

\begin{proof}
Let $U\in X_{\infty}$. Then, there is $\left\lbrace U^{k}\right\rbrace\subset X$ and $V\in Y$ such that $U^{k}\rightarrow U$ and $\mathcal{T}(U^{k})\rightarrow V$ in $Y$. According to the definition of $\mathcal{T}$, if we set $V^{k}:=\mathcal{T}(U^{k})$, we have that
\begin{align*}
A^{0}(U^{k})V^{k}_{t}+A^{i}(U^{k})\partial_{i}V^{k}-&B^{ij}(U^{k})\partial_{i}\partial_{j}V^{k}+D(U^{k})V^{k}=F(U^{k};D_{x}U^{k}),\\
\left.U^{k}\right\rvert_{t=0}=&U_{0}
\end{align*}
for all $k\in\mathbb{N}$. By taking the limit as $k\rightarrow\infty$ and using \eqref{eq:91}-\eqref{eq:93} we find that $V$ satisfies the Cauchy problem \eqref{eq:81} (where the equation is satisfied in $\mathcal{C}([0,T_{0}];H^{s-2})$). If there is another sequence $\{\widehat{U}^{k}\}$ in X such that $\widehat{U}^{k}\rightarrow U$ and $\widehat{V}^{k}:=\mathcal{T}(\widehat{U}^{k})\rightarrow\widehat{V}$ in $Y$ then, as in the previous lines, $\widehat{V}$ satisfies \eqref{eq:81}. But since $U^{k},V^{k},\widehat{V}^{k}\in X$, we have that, $U,V,\widehat{V}\in\overline{X}$ and by Lemma \ref{X0}, it follows that $U,V,\widehat{V}\in X_{0}$. Therefore, $V-\widehat{V}$ satisfies \eqref{eq:lowreg} with $m=s$ and $T=T_{0}$ and thus, the well-posedness of Theorem \ref{energydec} implies that $V=\widehat{V}$. Hence, $\widehat{\mathcal{T}}(U)$ is independent of the Cauchy sequence chosen to converge to $U$.
\end{proof}
\begin{rem}
\label{similarlinear}
As consequence of the Lipschitz continuity of $\mathcal{T}$, it follows that $X_{\infty}=\overline{X}$. Indeed, is enough to show that $\overline{X}\subseteq X_{\infty}$. If $V\in\overline{X}$, there is a sequence $\left\lbrace V^{k}\right\rbrace\subset X$ such that, $V^{k}\rightarrow V$ in $Y$. Then, according with \eqref{eq:818}, $\left\lbrace T(V^{k})\right\rbrace$ is a Cauchy sequence in $Y$ and so, there exists $W\in Y$ such that $T(V^{k})\rightarrow W$ in $Y$. Hence $X_{\infty}=\overline{X}$.
\end{rem}
\begin{coro}
There is a unique $U_{\infty}\in X_{\infty}$ such that $\widehat{\mathcal{T}}(U_{\infty})=U_{\infty}$.
\end{coro}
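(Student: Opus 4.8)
The plan is to obtain this corollary as an immediate application of the abstract fixed point Theorem \ref{fixedpo} to the solution operator $\mathcal{T}$ of \eqref{eq:82}, so that all that remains is to check that hypotheses $(i)$ and $(ii)$ of that theorem are in force for an appropriate choice of the time horizon.

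First I would pin down $T_0\in(0,T)$ small enough that the conditions \eqref{eq:725}, \eqref{eq:728} and the smallness condition \eqref{eq:fibonacci} hold simultaneously; this is legitimate since each of them only asks $T_0$ to lie below a positive threshold, so all can be imposed at once by shrinking $T_0$. For such a $T_0$, Theorem \ref{invariant} makes the map $\mathcal{T}:X\to X$ well defined on $X=X_{T_0}^{s}(g_2,M)$ as in \eqref{eq:83}, and this set sits inside the Banach space $Y$ of the definition preceding Theorem \ref{lipschitz}, so the abstract framework of Theorem \ref{fixedpo} applies.

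Next I would verify hypothesis $(i)$, which is exactly the content of Theorem \ref{extension}: it produces the well-defined extension $\widehat{\mathcal{T}}:X_\infty\to X_\infty\subseteq Y$ with $\widehat{\mathcal{T}}(U)$ independent of the sequence in $X$ used to approximate $U$. Then hypothesis $(ii)$: Theorem \ref{lipschitz} gives \eqref{eq:823}, that is $\|\mathcal{T}^2(U^2)-\mathcal{T}^2(U^1)\|_Y\leq CT_0\left(\|\mathcal{T}(U^2)-\mathcal{T}(U^1)\|_Y+\|U^2-U^1\|_Y\right)$ for all $U^1,U^2\in X$, and the choice \eqref{eq:fibonacci} gives $0<\alpha_0:=CT_0<\tfrac{1}{2}$, which is precisely \eqref{eq:25}. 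With $(i)$ and $(ii)$ established, Theorem \ref{fixedpo} yields a unique fixed point $U_\infty\in X_\infty$ of $\widehat{\mathcal{T}}$, which is the assertion of the corollary.

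I do not expect a genuine obstacle here: the analytic work has already been absorbed into Theorems \ref{invariant}, \ref{lipschitz} and \ref{extension}. The only point requiring a little bookkeeping is the simultaneous fulfillment of the various smallness requirements on $T_0$ — those ensuring invariance of $X_{T_0}^{s}(g_2,M)$, those underlying the Lipschitz and contraction-in-the-low-norm estimates, and \eqref{eq:fibonacci} — but these are compatible since taking $T_0$ smaller only helps. One may additionally record, via Remark \ref{similarlinear}, that $X_\infty=\overline{X}$, so that $U_\infty$ lies in the closure of $X$ in $Y$.
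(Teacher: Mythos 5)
Your argument is correct and follows the same route as the paper: hypothesis $(i)$ of Theorem \ref{fixedpo} is supplied by Theorem \ref{extension}, hypothesis $(ii)$ by \eqref{eq:823} together with the choice \eqref{eq:fibonacci} of $T_{0}$, and the corollary follows. The extra bookkeeping you note about imposing \eqref{eq:725}, \eqref{eq:728} and \eqref{eq:fibonacci} simultaneously is exactly what the paper does implicitly, so there is no discrepancy.
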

\begin{proof}
By \eqref{eq:823} and \eqref{eq:fibonacci}, the operator $\mathcal{T}:X\rightarrow X$ satisfies \eqref{eq:25} and by Theorem \ref{extension}, $\widehat{\mathcal{T}}: X_{\infty}\rightarrow X_{\infty}$ is well defined. Therefore, the hypothesis of Theorem \ref{fixedpo} are satisfied and the result follows.
\end{proof}
\begin{theo}
\label{surjective}
Let $X=X_{T_{0}}^{s}(g_{2},M)$, $\widehat{\mathcal{T}}$ the operator defined in Theorem 9.1 and $V_{\infty}\in X_{\infty}$. If there is $W_{\infty}\in X_{\infty}$ such that $\widehat{\mathcal{T}}(W_{\infty})=V_{\infty}$ then, $V_{\infty}\in X$.
\end{theo}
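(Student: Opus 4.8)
The plan is to realize $V_\infty$ as the solution of a linear Cauchy problem whose coefficients are frozen at $W_\infty$, and then to upgrade its regularity from that of $X_\infty$ to that of $X$ by re-solving that linear problem in the stronger class and matching the two solutions by uniqueness. First I would record what is already available. By Remark \ref{similarlinear}, $X_\infty=\overline X$, so Lemma \ref{X0} gives $W_\infty,V_\infty\in X_0$; in particular each of them satisfies the regularity and bounds \eqref{eq:97}--\eqref{eq:98} and takes values in $\mathcal{O}_{g_2}$. Moreover, by (the proof of) Theorem \ref{extension}, $V_\infty$ satisfies the linear Cauchy problem \eqref{eq:81} with every coefficient evaluated at $W_\infty$ and source $F(W_\infty;D_xW_\infty)$: taking $\{W^k\}\subset X$ with $W^k\to W_\infty$ in $Y$ and $V^k:=\mathcal{T}(W^k)\to V_\infty$ in $Y$, one passes to the limit in the systems \eqref{eq:71}--\eqref{eq:73} for the $V^k$ using \eqref{eq:91}--\eqref{eq:93} and the continuity of the coefficient and source maps (Lemmas \ref{importantlemma} and \ref{nonlinearestimates}). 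Comparing with the definition of $X=X^s_{T_0}(g_2,M)$, the only properties of $V_\infty$ still to be verified are the temporal continuity statements \eqref{eq:567}, \eqref{eq:569} and \eqref{eq:570}.

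Next I would re-solve the linear problem at $W_\infty$ in the strong class. Since $W_\infty\in X_0$, the coefficients $A^0(W_\infty),(A^0(W_\infty))^{-1},A^i(W_\infty),B^{ij}(W_\infty),D(W_\infty)$ satisfy $\mathbf{I}$ and the symmetry, positivity, ellipticity and $L^\infty/L^2$ bounds making up $\mathbf{II}'$, and the source the $L^2$ part of $\mathbf{III}$, by exactly the computations of Lemma \ref{existencelemma} (which use only $\mathbf{A}$--$\mathbf{F}$, the pointwise constraint $W_\infty(Q_{T_0})\subset\mathcal{O}_{g_2}$, and the bounds $W_\infty\in L^\infty(0,T_0;H^s)$, $\nabla (W_\infty)_2\in L^2(0,T_0;H^s)$, $(W_\infty)_t\in L^2(0,T_0;H^{s-1})$, all available from $X_0$). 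The continuity-in-time conditions $\mathbf{H2}'$, $\mathbf{H4}'$ and the $\mathcal{C}([0,T_0];H^{s-1})$ part of $\mathbf{III}$ I would obtain by observing that $W_\infty\in L^\infty(0,T_0;H^s)$ together with $(W_\infty)_t\in L^2(0,T_0;H^{s-1})$ forces $W_\infty\in\mathcal{C}([0,T_0];H^{s'})$ for every $s'<s$ and $W_\infty\in\mathcal{C}_w([0,T_0];H^s)$, which, fed through the chain-rule and commutator estimates recalled in Section 3, upgrades the coefficients and the source to be strongly continuous in time into the top-order spaces. With $\mathbf{I}$, $\mathbf{II}'$ and $\mathbf{III}$ in hand, Theorem \ref{wplinearcoupling} (with $m=s$) yields a solution $\tilde V$ of \eqref{eq:81} satisfying \eqref{eq:567}--\eqref{eq:571} with $m=s$ and the energy estimate of Theorem \ref{energydec}; repeating the choice of $T_0$ from the proof of Theorem \ref{invariant} (inequalities \eqref{eq:725} and \eqref{eq:728}) shows $\tilde V$ remains in $\mathcal{O}_{g_2}$ and obeys the bound $M^2$, hence $\tilde V\in X$. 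Finally, $\tilde V$ and $V_\infty$ both lie in the class \eqref{eq:lowreg} with $m=s$ (for $V_\infty$ this is precisely $V_\infty\in X_0$) and both solve \eqref{eq:81} with coefficients at $W_\infty$, so the uniqueness statement in Theorem \ref{energydec} forces $\tilde V=V_\infty$; therefore $V_\infty\in X$.

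The main obstacle is the temporal-regularity upgrade in the last paragraph: a priori $W_\infty$, and with it the coefficients of the linear problem solved by $V_\infty$, is only bounded --- weakly continuous --- in time into $H^s$, whereas the strong linear well-posedness Theorem \ref{wplinearcoupling} was stated under the $\mathcal{C}([0,T_0];\widehat{H}^{s})$-type hypotheses $\mathbf{H2}'$ and $\mathbf{H4}'$. Bridging this gap --- either by the interpolation and weak-continuity argument sketched above, or by running the vanishing-viscosity scheme \eqref{eq:512} directly on the linear problem at $W_\infty$ and identifying its limit with $V_\infty$ via the uniqueness of Theorem \ref{energydec} --- is where the actual work lies; once it is in place, the remainder is an assembly of the estimates of Sections 6--8.
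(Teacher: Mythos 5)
Your first paragraph is fine and matches the paper: by Remark \ref{similarlinear} and Lemma \ref{X0} one gets $V_{\infty},W_{\infty}\in X_{0}\cap\mathcal{C}([0,T_{0}];H^{s-1})$, $V_{\infty}$ solves the linear problem \eqref{eq:81} with coefficients frozen at $W_{\infty}$, and the only missing ingredient for $V_{\infty}\in X$ is strong temporal continuity at top order. The gap is in your proposed bridge for that last step. You want to re-solve the linear problem at $W_{\infty}$ via Theorem \ref{wplinearcoupling}, which requires $\mathbf{II'}$ and $\mathbf{III}$, i.e.\ coefficients and source \emph{strongly continuous in time} into $\widehat{H}^{s}$ and $H^{s-1}$/$L^{2}H^{s}$. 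But $W_{\infty}\in X_{0}$ only gives $W_{\infty}\in L^{\infty}(0,T_{0};H^{s})$ with $(W_{\infty})_{t}\in L^{2}(0,T_{0};H^{s-1})$, hence strong continuity into $H^{s'}$ for $s'<s$ and only weak continuity into $H^{s}$. Your claim that this, ``fed through the chain-rule and commutator estimates,'' upgrades $A^{0}(W_{\infty})$, $A^{i}(W_{\infty})$, etc.\ to $\mathcal{C}([0,T_{0}];\widehat{H}^{s})$ is false: continuity of $\|D^{s}h(W_{\infty}(t))\|$ in $t$ requires strong continuity of $D^{s}W_{\infty}$ in $L^{2}$, which is exactly what is unavailable. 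Indeed, if such an upgrade existed you could apply it to $V_{\infty}$ itself and conclude $V_{\infty}\in\mathcal{C}([0,T_{0}];H^{s})$ without using the equation at all, contradicting the fact (stressed in the paper and in \cite{maj}) that $X$ is not closed in $Y$. Your fallback, running the vanishing-viscosity scheme \eqref{eq:512} at $W_{\infty}$, hits the same wall: the existence theory for the regularized problem (Theorem \ref{parabolicwp}, via the semigroup construction) again needs $\mathbf{H2'}$, $\mathbf{H4'}$, $\mathbf{H7}$, i.e.\ temporal continuity of the coefficients. So the step you yourself flag as ``where the actual work lies'' is not bridged by either of your suggestions.

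The paper's proof avoids re-solving altogether. It mollifies the \emph{solution} $V_{\infty}$ in space: $V_{\infty}^{\epsilon}\in\mathcal{C}([0,T_{0}];H^{s})$ solves a perturbed equation of the form \eqref{eq:444} with error terms vanishing as $\epsilon\rightarrow 0$, and the difference $V_{\infty}^{\epsilon}-V_{\infty}^{\bar{\epsilon}}$ is then estimated by the \emph{a priori} estimate of Theorem \ref{energydec}, which only requires the low-regularity hypotheses $\mathbf{II}$ on the coefficients --- and these \emph{are} satisfied with coefficients frozen at $W_{\infty}\in X_{0}$ (that is precisely why the statement assumes $V_{\infty}$ lies in the range of $\widehat{\mathcal{T}}$). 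This shows $\{V_{\infty}^{\epsilon}\}$ is Cauchy in $\mathcal{C}([0,T_{0}];H^{s})$, hence $V_{\infty}\in\mathcal{C}([0,T_{0}];H^{s})$ and $V_{\infty}\in X$. If you replace your second paragraph by this mollification-plus-a-priori-estimate argument, the rest of your outline goes through.
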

\begin{proof}
Observe that, $V_{\infty},W_{\infty}\in X_{0}\cap\mathcal{C}([0,T_{0}];H^{s-1})$, as consequence of Lemma \ref{X0} and \eqref{eq:91}. It remains to show that $V_{\infty}\in\mathcal{C}([0,T_{0}]; H^{s})$. If $V_{\infty}^{\epsilon}$ is the mollification of $V_{\infty}$ then, for each $\epsilon>0$, $V_{\infty}^{\epsilon}\in\mathcal{C}([0,T_{0}];H^{s})$. Consider $V_{\infty}^{\epsilon}$ and $V_{\infty}^{\bar{\epsilon}}$ as well as their respective Cauchy problems (see \eqref{eq:444}). Since $W_{\infty}\in X_{0}$, the assumptions of Theorem \ref{energydec} are satisfied (with $m=s$, $T=T_{0}$) and according with \eqref{eq:543}, \eqref{eq:537} and \eqref{eq:538}, the estimate, $\|V_{\infty}^{\epsilon}(t)-V_{\infty}^{\bar{\epsilon}}(t)\|_{s}\leq K_{0,\epsilon,\bar{\epsilon}}(T_{0})\Phi_{0,\epsilon,\bar{\epsilon}}(T_{0})$, holds true for all $t\in[0,T_{0}]$ and some $K_{0,\epsilon,\bar{\epsilon}}(T_{0})$ and $\Phi_{0,\epsilon,\bar{\epsilon}}(T_{0})$ with the property that $K_{0,\epsilon,\bar{\epsilon}}(T_{0}),\Phi_{0,\epsilon,\bar{\epsilon}}(T_{0})\rightarrow 0$ as $\epsilon,\bar{\epsilon}\rightarrow 0$. Hence, $\left\lbrace V_{\infty}^{\epsilon}\right\rbrace_{\epsilon\rightarrow 0}$ is a Cauchy sequence in $\mathcal{C}([0,T_{0}];H^{s})$ and $V_{\infty}\in\mathcal{C}([0,T_{0}];H^{s})$.
\end{proof}
\begin{coro}
Let $U_{\infty}\in X_{\infty}$ be the unique fixed point of $\widehat{\mathcal{T}}$. Then, $U_{\infty}\in X$.\qed
\end{coro}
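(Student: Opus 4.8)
The plan is to deduce this corollary directly from Theorem \ref{surjective}, which is the substantive regularity-gain result already established. The key observation is that a fixed point is, trivially, in the range of $\widehat{\mathcal{T}}$: since $\widehat{\mathcal{T}}(U_{\infty})=U_{\infty}$, the element $U_{\infty}\in X_{\infty}$ is itself a value of $\widehat{\mathcal{T}}$.

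First I would invoke the preceding Corollary (existence and uniqueness of the fixed point) to fix $U_{\infty}\in X_{\infty}$ with $\widehat{\mathcal{T}}(U_{\infty})=U_{\infty}$. Then I would apply Theorem \ref{surjective} with the choices $V_{\infty}:=U_{\infty}$ and $W_{\infty}:=U_{\infty}$, both of which lie in $X_{\infty}$. The hypothesis of that theorem, namely that there exists $W_{\infty}\in X_{\infty}$ with $\widehat{\mathcal{T}}(W_{\infty})=V_{\infty}$, is satisfied precisely because $U_{\infty}$ is a fixed point. The conclusion of Theorem \ref{surjective} then gives $V_{\infty}=U_{\infty}\in X$, which is exactly the assertion.

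There is essentially no obstacle here: the entire content has been front-loaded into Theorem \ref{surjective} (whose proof uses the mollification argument together with the energy estimate of Theorem \ref{energydec} to upgrade $\mathcal{C}([0,T_{0}];H^{s-1})$ regularity to $\mathcal{C}([0,T_{0}];H^{s})$) and into Lemma \ref{X0} (which places $\overline{X}=X_{\infty}$ inside $X_{0}$, securing the boundedness estimate \eqref{eq:98} and the range condition $U_{\infty}(x,t)\in\mathcal{O}_{g_{2}}$). The only thing to check is the bookkeeping that the remaining membership conditions defining $X=X_{T_{0}}^{s}(g_{2},M)$—the bound \eqref{eq:710}, the localization \eqref{eq:79}, and the regularity \eqref{eq:567}--\eqref{eq:571} with $m=s$—are all already furnished by $X_{0}\cap\mathcal{C}([0,T_{0}];H^{s})$ together with $U_{\infty}$ solving \eqref{eq:81}; this is immediate once $U_{\infty}\in\mathcal{C}([0,T_{0}];H^{s})$ is known. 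So the proof is a one-line application of Theorem \ref{surjective} to $W_{\infty}=V_{\infty}=U_{\infty}$.
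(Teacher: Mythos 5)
Your proposal is correct and matches the paper's (implicit) argument: the corollary is stated with a qed precisely because it follows at once from Theorem \ref{surjective} applied with $W_{\infty}=V_{\infty}=U_{\infty}$, using $\widehat{\mathcal{T}}(U_{\infty})=U_{\infty}$. Your additional bookkeeping remarks about $X_{0}$ and the $\mathcal{C}([0,T_{0}];H^{s})$ upgrade are consistent with how Theorem \ref{surjective} and Lemma \ref{X0} are used in the paper.
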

Finally, as consequence of Corollaries 2 and 3 we conclude the following Theorem.
\begin{theo}[Local existence and uniqueness]
\label{local}
Let $s\in\mathbb{N}$ such that $s\geq s_{0}+1$, $s_{0}=\left[\frac{d}{2}\right]+1$. Assume that conditions \textup{\textbf{A}} through \textup{\textbf{F}} are satisfied. Then there exists a positive constant $T_{0}$($\leq T$), depending only on $\mathcal{O}_{g_{0}}$, $\mathcal{O}_{g_{2}}$ and $\|(u_{0},v_{0},w_{0})\|_{s}$, such that the initial value problem 
\begin{equation}
\label{eq:quasilinear}
\begin{aligned}
A_{1}^{0}(U)u_{t}+A^{i}_{11}(U)\partial_{i}u+A_{12}^{i}(U)\partial_{i}v&=f_{1}(U,D_{x}v),\\
A_{2}^{0}(U)v_{t}+A^{i}_{21}(U)\partial_{i}u+A_{22}^{i}(U)\partial_{i}v+A^{i}_{23}(U)\partial_{i}w&-B_{0}^{ij}(U)\partial_{i}\partial_{j}v\\
&=f_{2}(U,D_{x}U),\\
A_{3}^{0}(U)w_{t}+A^{i}_{32}(U)\partial_{i}v+A_{33}^{i}(U)\partial_{i}w+D_{0}(U)w&=f_{3}(U,D_{x}v),\\
U(x,0)&=U_{0}(x),
\end{aligned}
\end{equation}
has a unique solution $U\in X_{T_{0}}^{s}(g_{2},M)$, where $g_{2}$ and $M$ are determined by \eqref{eq:722} and \eqref{eq:723} respectively.\qed
\end{theo}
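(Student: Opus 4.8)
The plan is to assemble the machinery already built: the invariance of $X=X_{T_0}^{s}(g_2,M)$ under the solution operator $\mathcal{T}$ (Theorem \ref{invariant}), the Fibonacci-type estimate for $\mathcal{T}^{2}$ (Theorem \ref{lipschitz} together with the remark fixing $\alpha_0=CT_0<\tfrac12$), the abstract fixed point result (Theorem \ref{fixedpo}), the existence of the extension $\widehat{\mathcal{T}}$ (Theorem \ref{extension}), and the regularity gain of its range (Theorem \ref{surjective}). First I would fix $T_0\in(0,T]$ small enough that \eqref{eq:725}, \eqref{eq:728} and \eqref{eq:fibonacci} hold simultaneously; since each of these is a smallness requirement on $T_0$ with constants depending only on $\mathcal{O}_{g_0}$, $\mathcal{O}_{g_2}$ and $\|(u_0,v_0,w_0)\|_{s}$, such a $T_0$ exists. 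With this choice, Theorem \ref{invariant} makes $\mathcal{T}:X\to X$ of \eqref{eq:82} well defined, the remark after Theorem \ref{lipschitz} gives that $\mathcal{T}$ obeys \eqref{eq:25} with $0<\alpha_0=CT_0<\tfrac12$, and Theorem \ref{extension} furnishes the well-defined extension $\widehat{\mathcal{T}}:X_\infty\to X_\infty$.

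Next I would invoke Theorem \ref{fixedpo}: hypotheses $(i)$ and $(ii)$ are exactly what the previous paragraph supplies, so there is a unique fixed point $U_\infty\in X_\infty$ of $\widehat{\mathcal{T}}$. Applying Theorem \ref{surjective} with $W_\infty=U_\infty$ and $V_\infty=\widehat{\mathcal{T}}(U_\infty)=U_\infty$ promotes $U_\infty$ from $X_\infty$ to $X=X_{T_0}^{s}(g_2,M)$. The final step is to translate ``fixed point of $\widehat{\mathcal{T}}$'' into ``solution of \eqref{eq:quasilinear}'': since $U_\infty\in X\subseteq X_\infty$ and $\widehat{\mathcal{T}}$ restricts to $\mathcal{T}$ on $X$, we have $\mathcal{T}(U_\infty)=\widehat{\mathcal{T}}(U_\infty)=U_\infty$; inserting $V=U_\infty$ in the defining linear problem \eqref{eq:81} for $\mathcal{T}$ shows that $U_\infty$ solves \eqref{eq:quasilinear} with initial datum $U_0$ and with the regularity encoded in $X_{T_0}^{s}(g_2,M)$. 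For uniqueness, any $U\in X_{T_0}^{s}(g_2,M)$ solving \eqref{eq:quasilinear} is a fixed point of $\mathcal{T}:X\to X$, hence of $\widehat{\mathcal{T}}$ on $X_\infty$, so the uniqueness clause of Theorem \ref{fixedpo} forces $U=U_\infty$.

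The argument is essentially bookkeeping once all the earlier results are in hand; the points I would be most careful about are (a) verifying that a single radius $T_0$ can be chosen to satisfy \eqref{eq:725}, \eqref{eq:728} and \eqref{eq:fibonacci} at once, so that invariance of $X$ and the low-norm contraction of $\mathcal{T}^{2}$ hold on the same interval, and (b) checking that an arbitrary solution of \eqref{eq:quasilinear} lying in $X$ is genuinely a point of $X_\infty$ (immediate from $X\subseteq X_\infty$) so that the abstract uniqueness applies to it. Everything else is a direct citation of Theorems \ref{invariant}, \ref{lipschitz}, \ref{extension}, \ref{surjective} and \ref{fixedpo}.
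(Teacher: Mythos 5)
Your proposal is correct and follows essentially the same route as the paper, which deduces Theorem \ref{local} directly from the two corollaries (unique fixed point of $\widehat{\mathcal{T}}$ in $X_{\infty}$ via Theorems \ref{fixedpo}, \ref{lipschitz}, \ref{extension}, and its promotion to $X$ via Theorem \ref{surjective}), with $T_{0}$ chosen so that \eqref{eq:725}, \eqref{eq:728} and \eqref{eq:fibonacci} hold simultaneously. Your added bookkeeping — identifying a quasilinear solution in $X$ with a fixed point of $\mathcal{T}$ via uniqueness of the linear problem, and noting $X\subseteq X_{\infty}$ for the uniqueness clause — is exactly what the paper leaves implicit.
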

\section{The viscous Cattaneo-Christov system for compressible fluid flow in several space dimensions}
\label{CattaneoC}
Consider the basic equations for a compressible, viscous, heat conducting fluid in space ($x\in\mathbb{R}^{d}$ with $d=1,~2~\mbox{or}~3$)
\begin{eqnarray}
\rho_{t}+\nabla\cdot(\rho v)&=&0,\label{eq:111}\\
(\rho v)_{t}+\nabla\cdot(\rho v\otimes v)&=&\nabla\cdot\mathbb{T},\label{eq:112}\\
(\rho E)_{t}+\nabla\cdot(\rho E v)&=&\nabla\cdot(\mathbb{T}v)-\nabla\cdot q,\label{eq:113}
\end{eqnarray}
where $\rho=\rho(x,t)$ is the mass density, $v=(v_{1},..,v_{d})(x,t)\in\mathbb{R}^{d}$ is the velocity field, $E:=\rho\left(\frac{1}{2}|v|^{2}+e\right)$ is the total energy, $e=e(x,t)\in\mathbb{R}$ is the internal energy field, $\mathbb{T}$ denotes the Newtonian stress tensor given as $\mathbb{T}=2\mu\mathbb{D}(v)+\lambda\nabla\cdot v\mathbb{I}-p\mathbb{I}$, with $\mathbb{I}$ the identity matrix of order $d\times d$ and $q=(q_{1},..,q_{d})(x,t)\in\mathbb{R}^{d}$ is the heat flux vector. Here, $\mathbb{D}(u)=\frac{1}{2}(\nabla u+\nabla u ^{T})$ denotes the deformation tensor, $p=p(x,t)\in\mathbb{R}$ is the pressure field and $\lambda$ and $\mu$ are the Newtonian viscosity coefficients.\\
In order to close the system \eqref{eq:111}-\eqref{eq:113} a constitutive equation for the heat flux is needed (see \cite[Chapter 6]{tadmor}). In this section, we consider equations \eqref{eq:111}-\eqref{eq:113} coupled together with the frame invariant formulation of the Maxwell-Cattaneo law proposed by Christov namely, equation \eqref{eq:17}. We make following thermodynamical assumptions:
\begin{itemize}
	\item [\textbf{T1}] The independent thermodynamical variables are the mass density $\rho>0$ and the absolute temperature $\theta>0$. They vary within the domain
	\[\mathcal{D}:=\left\lbrace(\rho,\theta)\in\mathbb{R}^{2}:\rho>0,\theta>0\right\rbrace.\]
Moreover, $p,e,\lambda,\mu,\kappa\in\mathcal{C}^{\infty}(\mathcal{D})$.
	\item [\textbf{T2}] The viscosity coefficients and the heat conductivity satisfy the inequalities
	\[\frac{2}{3}\mu(\rho,\theta)+\lambda(\rho,\theta),~\mu(\rho,\theta),~\kappa(\rho,\theta)>0\quad\mbox{for all}~(\rho,\theta)\in\mathcal{D}\] 
	\item [\textbf{T3}] The fluid satisfies that, $p,p_{\rho},p_{\theta},e_{\theta}>0$, for all $(\rho,\theta)\in\mathcal{D}$.
\end{itemize}
\begin{rem}
Assumption \textup{\textbf{T3}} refers to compressible fluids satisfying the standard assumptions of Weyl \cite{weyl}. In particular, with them, one can show that the Euler system of equations is of hyperbolic type \textup{(}\cite{daf}, \cite{serre}\textup{)}. Meanwhile, assumption \textup{\textbf{T2}} gives a viscous and heat conductive character to the model. For these reason, we refer to equations \eqref{eq:111}-\eqref{eq:113} together with \eqref{eq:17} as the viscous Cattaneo-Christov system. In a limit case in which $\mu=\lambda=0$ on $\mathcal{D}$ we obtain the inviscid version of the system.
\end{rem}
\subsection{Quasilinear Cattaneo-Christov systems}
The Cattaneo-Christov system, (equations \eqref{eq:111}-\eqref{eq:113} and \eqref{eq:17}), can be written in the quasilinear form, 
\begin{equation}
\widetilde{A}^{0}(U)U_{t}+\widetilde{A}^{i}(U)\partial_{i}U+\widetilde{D}(U)U-\widetilde{B}^{ij}(U)\partial_{i}\partial_{j}U=\widetilde{F}(U,D_{x}U),
\label{eq:116}
\end{equation}
where $U=(\rho,v,\theta,q)^{\top}\in\mathcal{O}\subset\mathbb{R}^{N}$ is the vector of state variables, defined on the convex open set 
\[\mathcal{O}:=\left\lbrace(\rho,v,\theta,q)\in\mathbb{R}^{N}:\rho>0,\theta>0\right\rbrace,\]
$N=2d+2$ and $d$ is the spatial dimension. We show the local existence and uniqueness of the Cauchy problem for \eqref{eq:116} when $d=3$. To obtain \eqref{eq:116}, we provide the symbols
\begin{align}
\widetilde{A}(\xi;U):=\sum_{i=1}^{3}\widetilde{A}^{i}(U)\xi_{i}\quad\mbox{and}\quad \widetilde{B}(\xi;U):=\sum_{i,j=1}^{3}\widetilde{B}^{ij}(U)\xi_{i}\xi_{j},\label{eq:115}
\end{align}
for each $\xi=(\xi_{1},\xi_{2},\xi_{3})\in\mathbb{S}^{2}$, as well as $\widetilde{A}^{0}(U)$, $\widetilde{D}(U)$ and the nonlinear terms $\widetilde{F}(U,D_{x}U)$. Indeed, $\widetilde{A}^{0}(U)$ and $\widetilde{D}(U)$ are diagonal matrices given as
\begin{align*}
\widetilde{A}^{0}(U)=\left(\begin{array}{cccc}
	1&&&\\
	&\rho\mathbb{I}_{3}&&\\
	&&\rho e_{\theta}&\\
	&&&\tau\mathbb{I}_{3}
\end{array}\right),\quad \widetilde{D}(U)=\left(\begin{array}{cc}
	\mathbb{O}_{5\times 5}&\\
	&\mathbb{I}_{3}
\end{array}\right),
\end{align*}
where $\mathbb{I}_{3}$ denotes the identity matrix of order $3\times 3$, $\mathbb{O}_{l_{1}\times l_{2}}$ is the zero matrix of order $l_{1}\times l_{2}$ and all the empty spaces refer to zero block matrices of the appropriate sizes. 
\small
\begin{equation*}
\widetilde{A}(\xi;U)=\left(\begin{array}{cccccccc}
	\xi\cdot v&\xi_{1}\rho&\xi_{2}\rho&\xi_{3}\rho&0&0&0&0\\
	\xi_{1}p_{\rho}&\rho\xi\cdot v&0&0&\xi_{1}p_{\theta}&0&0&0\\
	\xi_{2}p_{\rho}&0&\rho\xi\cdot v&0&\xi_{2}p_{\theta}&0&0&0\\
	\xi_{3}p_{\rho}&0&0&\rho\xi\cdot v&\xi_{3}p_{\theta}&0&0&0\\
	0&\xi_{1}\theta p_{\theta}&\xi_{2}\theta p_{\theta}&\xi_{3}\theta p_{\theta}&\rho e_{\theta}\xi\cdot v&\xi_{1}&\xi_{2}&\xi_{3}\\
	0&&&&\xi_{1}\kappa&\tau\xi\cdot v&0&0\\
	0&&\tau\mathcal{Q}(\xi;q)&&\xi_{2}\kappa&0&\tau\xi\cdot v&0\\
	0&&&&\xi_{3}\kappa&0&0&\tau\xi\cdot v\\
\end{array}\right),
\end{equation*}
\normalsize
where, for each $\xi\in\mathbb{S}^{2}$, the matrix $\widetilde{A}(\xi,U)$ has a sub-block matrix, $\mathcal{Q}(q;\xi)$, of order $3\times 3$, that is being multiplied by $\tau\neq 0$, given as
\begin{equation*}
\mathcal{Q}(q;\xi)=\left(\begin{array}{ccc}
	-\xi_{2} q_{2}-\xi_{3} q_{3}&\xi_{2}q_{1}&\xi_{3}q_{1}\\
	\xi_{1}q_{2}&-\xi_{1}q_{1}-\xi_{3}q_{3}&\xi_{3}q_{2}\\
	\xi_{1}q_{3}&\xi_{2}q_{3}&-\xi_{1}q_{1}-\xi_{2}q_{2}
\end{array}\right).
\end{equation*}
There is a matrix of order $3\times 3$, given as $B_{0}(\xi;U):=\mu\mathbb{I}_{3\times 3}+(\lambda+\mu)\xi\otimes\xi$ such that,
\[\widetilde{B}(\xi;U)=\left(\begin{array}{cccccc}
	&\mathbb{O}_{1\times 4}&&&\\
	&&&&\mathbb{O}_{4\times 4}\\
	\mathbb{O}_{3\times 1}&B_{0}(\xi;U)&&\\
	&\mathbb{O}_{4\times 4}&&&\mathbb{O}_{4\times 4}
\end{array}\right).\] 
Since $\lambda$ and $\mu$ are functions of $\rho$ and $\theta$, the non-linear terms are 
\[\widetilde{F}(U,D_{x}U)=\left(\begin{array}{c}
	0\\
	(\nabla\cdot v)\partial_{x_{1}}\lambda+D_{x} v_{1}\cdot D_{x}\mu+\partial_{x_{1}}v\cdot\nabla\mu\\
	(\nabla\cdot v)\partial_{x_{2}}\lambda+D_{x} v_{2}\cdot D_{x}\mu+\partial_{x_{2}}v\cdot\nabla\mu\\
	(\nabla\cdot v)\partial_{x_{3}}\lambda+D_{x} v_{3}\cdot D_{x}\mu+\partial_{x_{3}}v\cdot\nabla\mu\\
	\lambda(\nabla\cdot v)^{2}+\frac{\mu}{2}(\partial_{j}v_{i}+\partial_{i}v_{j})^{2}\\
	\mathbb{O}_{3\times 1}
\end{array}\right),\]
where the summation convention is being used for $(\partial_{j}v_{i}+\partial_{i}v_{j})^{2}$. When $\lambda=\mu=0$ for all $(\rho,\theta)\in\mathcal{D}$ we obtain a first order system  that is not hyperbolic \cite{angeles2021nonhyperbolicity}, even when its characteristic speeds are real. Consequently, \eqref{eq:116} is not Friedrichs symmetrizable. Nonetheless, the system has a \emph{partial symmetrizer} as stated in the following result.
\begin{lema}
\label{partialsym}
Let \textup{\textbf{T1}}-\textup{\textbf{T3}} be satisfied and consider the Cattaneo-Christov system in the form \eqref{eq:116} for $d=3$. There exists a partition of $U$ into $U=(u,v,w)$ where $u\in\mathbb{R}^{n}$, $v\in\mathbb{R}^{k}$, $w\in\mathbb{R}^{p}$, $n+k+p=8$, and $S=S(U)$, a matrix value function of $U\in\mathcal{O}$ of order $8\times 8$ such that:
\begin{itemize}
	\item [(a)] $S=S(U)$ is smooth and positive definite uniformly on each compact set contained in $\mathcal{O}$;
	\item [(b)] the products $A^{0}(U):=S(U)\widetilde{A}^{0}(U)$, $A^{i}(U):=S(U)\widetilde{A}^{i}(U)$, $B^{ij}(U):=S(U)\widetilde{B}^{ij}(U)$ and $D(U):=S(U)\widetilde{D}(U)$ all have the block matrix decomposition described in equations \eqref{eq:55}-\eqref{eq:58}.
\end{itemize}
\end{lema}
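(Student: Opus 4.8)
The plan is to write down the partition and the symmetrizer explicitly and then verify the two requirements. The partition is forced by the three dissipative mechanisms: the Newtonian viscosity acts only on the momentum equations, so the parabolic block is $v:=(v_{1},v_{2},v_{3})$ ($k=3$); Christov's relaxation introduces a zeroth order term only in the heat-flux equations, so the block carrying $D$ is $w:=(\theta,q_{1},q_{2},q_{3})$ ($p=4$); and what remains is $u:=\rho$ ($n=1$), with $n+k+p=8$. In this ordering --- which is already the natural ordering of $U=(\rho,v,\theta,q)$, so no reshuffling is needed --- one reads off from the displayed symbols that $\widetilde{A}^{0}$ is block diagonal, $\widetilde{D}$ is supported in the $w$-block and $\widetilde{B}^{ij}$ in the $v$-block, and, crucially, that the $(u,w)$ and $(w,u)$ blocks of $\widetilde{A}(\xi;U)$ already vanish (the $\rho$-equation contains no derivative of $\theta$ or $q$, and the $\theta$- and $q$-equations contain no derivative of $\rho$). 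Hence, among the features of \eqref{eq:55}--\eqref{eq:58}, only the symmetry of $A^{0}$ and of the diagonal blocks $A_{11}^{i}$, $A_{33}^{i}$ still has to be produced.

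I would then take $S$ block diagonal for this partition --- it must be, since $\widetilde{A}^{0}$ is block diagonal and invertible, so any $S$ with $S\widetilde{A}^{0}$ block diagonal is itself block diagonal --- namely
\[
S(U)=\mathrm{diag}\bigl(1,\ \mathbb{I}_{3},\ \kappa(\rho,\theta),\ \mathbb{I}_{3}\bigr),
\]
the identity except for the scalar $\kappa(\rho,\theta)$ in the $\theta$-slot. Property $(a)$ is then immediate: $S$ is diagonal with positive entries ($1$ and $\kappa>0$ by \textbf{T2}), it is smooth because $\kappa\in\mathcal{C}^{\infty}(\mathcal{D})$ by \textbf{T1}, hence symmetric, and on any compact $K\subset\mathcal{O}$ it is uniformly positive definite since $\kappa$ attains a positive minimum on $K$.

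For $(b)$ I would compute the four products in the $u\,|\,v\,|\,w$ block form. First, $A^{0}=S\widetilde{A}^{0}=\mathrm{diag}(1,\rho\mathbb{I}_{3},\kappa\rho e_{\theta},\tau\mathbb{I}_{3})$ is block diagonal as in \eqref{eq:55}, with each block symmetric positive definite by \textbf{T2}--\textbf{T3} and $\rho,\tau>0$. Since $S$ restricted to the $v$-block is $\mathbb{I}_{3}$, $B^{ij}=S\widetilde{B}^{ij}$ keeps as its only nonzero block the original $B_{0}^{ij}$, for which $\sum_{i,j}B_{0}^{ij}\xi_{i}\xi_{j}=\mu\mathbb{I}_{3}+(\lambda+\mu)\xi\otimes\xi$ is symmetric and positive definite (eigenvalues $\mu,\mu,\lambda+2\mu$, all positive by \textbf{T2}), matching \eqref{eq:57}; likewise $S$ restricted to the $q$-part of $w$ is $\mathbb{I}_{3}$, so $D=S\widetilde{D}$ keeps its single block $D_{0}=\mathrm{diag}(0,\mathbb{I}_{3})$, matching \eqref{eq:58}. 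Finally, in $A^{i}=S\widetilde{A}^{i}$ the $(1,3)$ and $(3,1)$ blocks still vanish (they do in $\widetilde{A}^{i}$ and a block-diagonal $S$ preserves this), $A_{11}^{i}$ is the scalar $v_{i}$, trivially symmetric, and $A_{33}^{i}$ --- obtained by applying $S_{3}=\mathrm{diag}(\kappa,\mathbb{I}_{3})$ on the left to the $(\theta,q)$-diagonal block of $\widetilde{A}^{i}$ --- has diagonal blocks $\kappa\rho e_{\theta}v_{i}$ and $\tau v_{i}\mathbb{I}_{3}$, manifestly symmetric, and off-diagonal blocks $\kappa e_{i}^{\top}$ and $\kappa e_{i}$ (with $e_{i}$ the $i$-th canonical basis vector of $\mathbb{R}^{3}$), which are transposes of one another; hence $A_{33}^{i}$ is symmetric. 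This yields the decomposition \eqref{eq:55}--\eqref{eq:58}, turning \eqref{eq:116} into a system of the form \eqref{eq:51}--\eqref{eq:53}.

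The only step that is not bookkeeping is the joint choice of the partition and of the weight $\kappa$: the temperature--heat-flux coupling enters $\widetilde{A}^{i}$ through the mismatched off-diagonal pair $e_{i}^{\top}$ (from the $\nabla\cdot q$ term in the energy balance) and $\kappa e_{i}$ (from the $\kappa\nabla\theta$ term in Christov's law), and multiplying the $\theta$-row by exactly $\kappa$ is what turns these into a symmetric $A_{33}^{i}$; grouping $\theta$ with $\rho$ rather than with $q$, or using any other scalar weight, would break this. I expect no further obstacle: once the partition and $S$ are fixed, everything reduces to reading entries off the displayed matrices.
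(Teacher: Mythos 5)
Your proof is correct and follows the same overall strategy as the paper --- fix the partition $u=\rho$, $v=$ velocity, $w=(\theta,q)$, exhibit an explicit block-diagonal $S$, and verify the claims by inspecting the products --- but your symmetrizer is genuinely different and simpler. The paper takes $S(U)=\mathrm{diag}\bigl(p_{\rho}/\rho,\ \mathbb{I}_{3},\ 1/\theta,\ \tfrac{1}{\kappa\theta}\mathbb{I}_{3}\bigr)$, a thermodynamically weighted choice that symmetrizes not only the diagonal blocks $A^{i}_{11}$ and $A^{i}_{33}$ but also the $\rho$--$v$ and $\theta$--$v$ couplings, so that $S\widetilde{A}(\xi;U)$ is symmetric except for the $\tau\mathcal{Q}(\xi;q)$ block; your $S=\mathrm{diag}(1,\mathbb{I}_{3},\kappa,\mathbb{I}_{3})$ only rescales the $\theta$-row, which is exactly what is needed to match the mismatched pair $\xi_{i}$ and $\kappa\xi_{i}$ in the $(\theta,q)$ block. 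Since the lemma and the downstream energy estimates use only the block pattern \eqref{eq:55}--\eqref{eq:58} together with assumptions \textbf{B}, \textbf{C}, \textbf{D} (no symmetry of $A^{i}_{12}$, $A^{i}_{21}$, $A^{i}_{22}$, $A^{i}_{23}$, $A^{i}_{32}$ is ever invoked), your choice suffices: $A^{0}=\mathrm{diag}(1,\rho\mathbb{I}_{3},\kappa\rho e_{\theta},\tau\mathbb{I}_{3})$ is symmetric and uniformly positive definite on compacts of $\mathcal{O}$ (granting $\tau>0$, exactly as the paper must), $A^{i}_{11}=v_{i}$ and your $A^{i}_{33}$ are symmetric, and $B_{0}$, $D_{0}$ are untouched. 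What the paper's heavier weights buy is the extra symmetry of the $u$--$v$ and $w$--$v$ couplings (closer to a full Friedrichs symmetrizer of everything but $\mathcal{Q}$), which is not exploited in this paper; what yours buys is brevity. Two small remarks: the eigenvalue check for $B_{0}(\xi;U)$ and the verification of \textbf{E} belong, in the paper, to the theorem following the lemma rather than to the lemma itself (harmless to include), and your parenthetical claim that no other scalar weight would work is slightly overstated --- the paper's own $S$ shows the weight is not unique once one also allows rescaling of the $q$-rows.
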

\begin{proof}
First, notice that, in the Cattaneo-Christov systems, the variable $\rho$ is decoupled from the variable $(\theta,q)\in\mathbb{R}^{4}$, that is, there are no spatial derivatives of $(\theta,q)$ in the evolution equation for $\rho$ (the conservation of mass) and vice-versa. Therefore, we take $u:=\rho$, $v$ as the velocity field, and $w:=(\theta,q)$, implying that $u\in\mathbb{R}$, $v\in\mathbb{R}^{3}$ and $w\in\mathbb{R}^{4}$. Consider the diagonal matrix function
\begin{equation}
S(U)=\left(\begin{array}{cccc}
\frac{p_{\rho}}{\rho}&&&\\
&\mathbb{I}_{3\times 3}&&\\
&&\frac{1}{\theta}&\\
&&&\frac{1}{\kappa\theta}\mathbb{I}_{3\times 3}\\
\end{array}\right).
\label{eq:117}
\end{equation}
By \textbf{T1}-\textbf{T3}, $S(U)$ is smooth and satisfies $(a)$. Observe that, $S(U)\widetilde{A}(\xi;U)$ is given as
\small
\begin{equation*}
\left(\begin{array}{cccccccc}
	\frac{p_{\rho}}{\rho}\xi\cdot v&p_{\rho}\xi_{1}&p_{\rho}\xi_{2}&p_{\rho}\xi_{3}&0&0&0&0\\
	\xi_{1}p_{\rho}&\rho\xi\cdot v&0&0&\xi_{1}p_{\theta}&0&0&0\\
	\xi_{2}p_{\rho}&0&\rho\xi\cdot v&0&\xi_{2}p_{\theta}&0&0&0\\
	\xi_{3}p_{\rho}&0&0&\rho\xi\cdot v&\xi_{3}p_{\theta}&0&0&0\\
	0&\xi_{1}p_{\theta}&\xi_{2}p_{\theta}&\xi_{3}p_{\theta}&\frac{\rho e_{\theta}}{\theta}\xi\cdot v&\frac{\xi_{1}}{\theta}&\frac{\xi_{2}}{\theta}&\frac{\xi_{3}}{\theta}\\
	0&&&&\frac{\xi_{1}}{\theta}&\frac{\tau}{\kappa\theta}\xi\cdot v&0&0\\
	0&&\frac{\tau}{\kappa\theta}\mathcal{Q}(\xi;q)&&\frac{\xi_{2}}{\theta}&0&\frac{\tau}{\kappa\theta}\xi\cdot v&0\\
	0&&&&\frac{\xi_{3}}{\theta}&0&0&\frac{\tau}{\kappa\theta}\xi\cdot v\\
\end{array}\right),
\end{equation*}
\normalsize
In this case, by using the symbols for the block matrices, we recognize that, for each $\xi\in\mathbb{S}^{2}$,  $A^{i}_{11}(\xi;U):=\frac{p_{\rho}}{\rho}v\cdot\xi$ is a matrix of order $1\times 1$ and thus symmetric, and
\[A^{i}_{33}(U)=\left(\begin{array}{cccc}
	\frac{\rho e_{\theta}}{\theta}v\cdot\xi&\frac{\xi_{1}}{\theta}&\frac{\xi_{2}}{\theta}&\frac{\xi_{3}}{\theta}\\
	\frac{\xi_{1}}{\theta}&\frac{\tau\xi\cdot v}{\kappa\theta}&0&0\\
	\frac{\xi_{2}}{\theta}&0&\frac{\tau\xi\cdot v}{\kappa\theta}&0\\
	\frac{\xi_{3}}{\theta}&0&0&\frac{\tau\xi\cdot v}{\kappa\theta}
\end{array}\right)\]
is a symmetric matrix of order $4\times 4$. Hence, assumption \textbf{I} is satisifed. Once this matrices are recognized, the rest of the block structure for $S(U)\widetilde{A}(\xi;U)$ follows. Notice that $S(U)\widetilde{B}(\xi;U)=\widetilde{B}(\xi;U)$ and 
\begin{equation*}
A^{0}(U)=S(U)\widetilde{A}^{0}(U)=\left(\begin{array}{cccc}
	\frac{p_{\rho}}{\rho}&&&\\
	&\rho\mathbb{I}_{3\times 3}&&\\
  &&\frac{\rho e_{\theta}}{\theta}&\\
	&&&\frac{\tau}{\kappa\theta}\mathbb{I}_{3\times 3}\\
\end{array}\right).
\end{equation*}
Therefore,  
\begin{align}
\label{eq:A0s}
A^{0}_{1}(U)=\rho,\quad
A^{0}_{3}(U)=\left(\begin{array}{cc}
	\frac{\rho e_{\theta}}{\theta}&\\
	&\frac{\tau}{\kappa\theta}\mathbb{I}_{3\times 3}
\end{array}\right)\quad\mbox{and}\quad A^{0}_{2}(U)=\rho\mathbb{I}_{3\times 3}.
\end{align}
Finally,  
 \begin{equation*}
S(U)\widetilde{D}(U)=\left(\begin{array}{cc}
	\mathbb{O}_{5\times 5}&\\
  &\frac{\tau}{\kappa\theta}\mathbb{I}_{3\times 3}
\end{array}\right).
\end{equation*}
\end{proof}
Observe that, as the Cattaneo-Christov system shows, the existence of a partial symmetrizer for a system of the form \eqref{eq:116} doesn't imply the hyperbolicity of the first order part of the system. Nonetheless, it is enough to conclude the following result.
\begin{theo}
Let $s\in\mathbb{N}$ such that $s\geq s_{0}+1$, $s_{0}=\left[\frac{d}{2}\right]+1$. Let $T>0$ and $U_{0}\in\mathcal{O}$ be given and suppose that the initial data satisfies \textup{\textbf{F}}. Under assumptions \textup{\textbf{T1}}-\textup{\textbf{T3}}, there are constants $0<T_{0}<T$, $g_{2}>0$ and $M$ such that, there is a unique solution $U=(\rho,v,\theta,q)^{\top}\in X_{T_{0}}^{s}(g_{2},M)$ for the Cauchy problem of the viscous Cattaneo-Christov system.
\end{theo}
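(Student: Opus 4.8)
The plan is to reduce the statement to a direct application of Theorem~\ref{local}, using the partial symmetrizer furnished by Lemma~\ref{partialsym}. Under \textbf{T1}--\textbf{T3} one multiplies the quasilinear form \eqref{eq:116} of the Cattaneo--Christov system on the left by the matrix $S(U)$ of \eqref{eq:117}; since $S(U)$ is positive definite on $\mathcal{O}$ it is invertible there, so the system
\[
A^{0}(U)U_{t}+A^{i}(U)\partial_{i}U-B^{ij}(U)\partial_{i}\partial_{j}U+D(U)U=F(U,D_{x}U),\qquad A^{0}=S\widetilde{A}^{0},\ \ldots
\]
with $F=S\widetilde{F}$ is equivalent to \eqref{eq:116}, and by Lemma~\ref{partialsym}(b) it has exactly the block structure \eqref{eq:55}--\eqref{eq:58} for the partition $u=\rho\in\mathbb{R}$, $v=$ velocity $\in\mathbb{R}^{3}$, $w=(\theta,q)\in\mathbb{R}^{4}$ (so $n+k+p=8=N$ with $d=3$, hence $s_{0}=2$ and $s\geq 3$). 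It then suffices to verify that the coefficients and source of this symmetrized system satisfy hypotheses \textbf{A}--\textbf{F}; Theorem~\ref{local} then yields $0<T_{0}<T$, $g_{2}>0$, $M$ as in \eqref{eq:722}--\eqref{eq:723}, and a unique $U\in X_{T_{0}}^{s}(g_{2},M)$, which by invertibility of $S$ is the desired solution of the viscous Cattaneo--Christov system.

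Checking \textbf{A}--\textbf{D}: smoothness on $\mathcal{O}$ (\textbf{A}) is immediate from the explicit formulas for $\widetilde{A}^{0},\widetilde{A}^{i},\widetilde{B}^{ij},\widetilde{D}$, the formula \eqref{eq:117} for $S$, and $p,e,\lambda,\mu,\kappa\in\mathcal{C}^{\infty}(\mathcal{D})$ from \textbf{T1}. For \textbf{B}, the diagonal blocks $A_{1}^{0},A_{2}^{0},A_{3}^{0}$ are those displayed in \eqref{eq:A0s}, with diagonal entries built from $p_{\rho}/\rho$, $\rho$, $\rho e_{\theta}/\theta$ and $\tau/(\kappa\theta)$, all strictly positive on compact subsets of $\mathcal{O}$ by \textbf{T3} ($p_{\rho},e_{\theta}>0$), \textbf{T2} ($\kappa>0$), and $\rho,\theta,\tau>0$; hence each $A_{j}^{0}$ is symmetric and uniformly positive definite on compacts. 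Condition \textbf{C} (i.e. assumption \textbf{I}) was already obtained in the proof of Lemma~\ref{partialsym}, where $A_{11}(\xi;U)=(p_{\rho}/\rho)\,v\!\cdot\!\xi$ is scalar and $A_{33}(\xi;U)$ is manifestly symmetric. For \textbf{D}, $S(U)\widetilde{B}^{ij}=\widetilde{B}^{ij}$, whose only nonzero block is the Newtonian viscosity symbol $B_{0}(\xi;U)=\mu\,\mathbb{I}_{3}+(\lambda+\mu)\,\xi\otimes\xi$; it is symmetric and symmetric in $(i,j)$, and for $\xi\in\mathbb{S}^{2}$ its eigenvalues are $\mu$ (on $\xi^{\perp}$) and $\lambda+2\mu=(\lambda+\tfrac23\mu)+\tfrac43\mu$ (on $\mathbb{R}\xi$), both strictly positive by \textbf{T2}. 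Thus $\sum_{i,j}B_{0}^{ij}(U)\xi_{i}\xi_{j}$ is positive definite with ellipticity constant $\eta>0$ uniform on compacts (by continuity), and by homogeneity the Legendre--Hadamard bound of \textbf{H1}/\textbf{D} follows for all $\xi\in\mathbb{R}^{3}$.

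The delicate step is \textbf{E}. Reading off the blocks of $F=S(U)\widetilde{F}(U,D_{x}U)$ for the partition $(u,v,w)=(\rho,v,(\theta,q))$: $f_{1}\equiv 0$ (no source in the mass equation); $f_{2}$ equals the velocity block of $\widetilde{F}$ (the identity block of $S$ acts there), consisting of the terms $(\nabla\!\cdot\!v)\nabla\lambda$, $D_{x}v_{i}\!\cdot\!D_{x}\mu$ and $\partial_{x_{i}}v\!\cdot\!\nabla\mu$, which are bilinear in $D_{x}U$ since $\nabla\lambda,\nabla\mu$ are linear in $(\nabla\rho,\nabla\theta)$; and $f_{3}$ has $\theta$-component $\theta^{-1}\bigl(\lambda(\nabla\!\cdot\!v)^{2}+\tfrac{\mu}{2}(\partial_{j}v_{i}+\partial_{i}v_{j})^{2}\bigr)$ and vanishing $q$-components. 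Hence $f_{1},f_{3}$ depend only on $(U,D_{x}v)$, $f_{2}$ on $(U,D_{x}U)$, all three are smooth on their domains by \textbf{T1}, and each vanishes when the relevant derivatives vanish, i.e. $f_{1}(U,0)=f_{2}(U,0)=f_{3}(U,0)=0$; moreover $\mathcal{O}=\{(\rho,v,\theta,q):\rho>0,\ \theta>0\}$ is open and convex, so \textbf{E} holds. Condition \textbf{F} is part of the hypothesis. With \textbf{A}--\textbf{F} verified, Theorem~\ref{local} applies and gives the unique solution $U=(\rho,v,\theta,q)^{\top}\in X_{T_{0}}^{s}(g_{2},M)$, completing the proof. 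The main obstacle is purely organizational: confirming that each cross-coupling term ($\nabla p$ in the momentum equation, $\theta p_{\theta}\nabla\!\cdot\!v$ and $-\kappa\nabla\theta$ in the energy/heat-flux block, the $\tau\mathcal{Q}(\xi;q)$ contributions of Christov's law) lands in the correct off-diagonal block of \eqref{eq:55}--\eqref{eq:58}, and that the quadratic source terms carry no linear-in-derivative part — both essentially settled already in the proof of Lemma~\ref{partialsym}.
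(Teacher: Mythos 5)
Your argument is correct and follows essentially the same route as the paper: multiply \eqref{eq:116} by the partial symmetrizer $S(U)$ of Lemma \ref{partialsym}, verify \textbf{A}--\textbf{F} from \textbf{T1}--\textbf{T3} (positivity of the $A^{0}_{j}$ blocks, ellipticity of $B_{0}(\xi;U)$ with constant $\min\{\mu,\lambda+2\mu\}$, vanishing of $f_{1},f_{2},f_{3}$ at zero derivatives), and then invoke Theorem \ref{local} with $g_{2}$ and $M$ as in \eqref{eq:722}--\eqref{eq:723}. Your verification is, if anything, slightly more detailed than the paper's (explicit eigenvalues of $B_{0}$, explicit reading of the blocks of $S\widetilde{F}$), but the structure of the proof is identical.
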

\begin{proof}
First, multiply \eqref{eq:116} by the partial symmetrizer given in \eqref{eq:117}. Then, according with Lemma \ref{partialsym}, this yields a quasilinear equation of the form \eqref{eq:quasilinear}. By \eqref{eq:A0s}, assumption \textbf{B} is satisfied. It is easy to show that, $(B_{0}(\xi;U)y,y)_{\mathbb{R}^{3}}\geq\min\left\lbrace 2\mu+\lambda,\mu\right\rbrace|y|^{2}$, for all $y\in\mathbb{R}^{3}$ and all $\xi\in\mathbb{S}^{2}$. Hence, by \textbf{T2}, assumption \textbf{D} holds true. Since the nonlinear terms are
\[f_{1}(U;D_{x}v):=0,\quad f_{3}(U,D_{x}v)=\frac{1}{\theta}\left(\lambda(\nabla\cdot v)^{2}+\frac{\mu}{2}(\partial_{j}v_{i}+\partial_{i}v_{j})^{2},0,0,0\right)^{\top}\] 
and
\[f_{2}(U,D_{x}U)=\left(\begin{array}{c}
	(\nabla\cdot v)\partial_{x_{1}}\lambda+D_{x} v_{1}\cdot D_{x}\mu+\partial_{x_{1}}v\cdot\nabla\mu\\
	(\nabla\cdot v)\partial_{x_{2}}\lambda+D_{x} v_{2}\cdot D_{x}\mu+\partial_{x_{2}}v\cdot\nabla\mu\\
	(\nabla\cdot v)\partial_{x_{3}}\lambda+D_{x} v_{3}\cdot D_{x}\mu+\partial_{x_{3}}v\cdot\nabla\mu
\end{array}\right)\]
we deduce that, $f_{1}(U;0)=0$, $f_{2}(U;0)=0$ and $f_{3}(U;0)=0$ for all $U\in\mathcal{O}$, thus complying with assumption \textbf{E}. Take $O_{g_{2}}$ and $M$ as it was described in \eqref{eq:722}-\eqref{eq:723} and the result follows as an application of Theorem \ref{local}.
\end{proof}
\subsection{An impossible viscous approximation}
\label{impvis}
For each $0<\epsilon<1$ consider the Cauchy problem for the linear equation in \eqref{eq:54} with $\epsilon B^{ij}$ instead of $B^{ij}$, that is,
\begin{equation}
\begin{aligned}
A^{0}U_{t}+A^{i}\partial_{i}U-\epsilon B^{ij}\partial_{i}\partial_{j}U+DU&=F,\\
\left.V\right\rvert_{t=0}&=U_{0},
\end{aligned}
\label{eq:121}
\end{equation}
where the equation is assumed to have a partially symmetrized structure. For each $0<\epsilon<1$, there is a unique solution $U^{\epsilon}=U^{\epsilon}(x,t)$, defined on $Q_{T}$, to the initial value problem \eqref{eq:121} that satisfies \eqref{eq:567}-\eqref{eq:571} and the energy estimate \eqref{eq:543}. If we want to use the vanishing viscosity method to conclude the $L^{2}$-well-posedness of the first order Cauchy problem
\begin{equation}
\begin{aligned}
A^{0}U_{t}+A^{i}\partial_{i}U-DU&=F,\\
\left.V\right\rvert_{t=0}&=U_{0},
\end{aligned}
\label{eq:122}
\end{equation}
we need the energy estimate for the linear equation \eqref{eq:122}. However, as the Cattaneo-Christov system shows, the equation in \eqref{eq:122} doesn't have to be hyperbolic. In this case, a vanishing viscosity approach for equation \eqref{eq:122} cannot take place because the hyperbolicity is a necessary condition for the $L^{2}$-well-posedness of \eqref{eq:122}. In fact, when the matrix coefficients of the equation in \eqref{eq:122} are constants (with respect to $(x,t)\in Q_{T}$) the hyperbolicity is equivalent to the $L^{2}$-well posedness of its Cauchy problem (see, \cite{benzoni}, \cite{otto} and \cite{serre}). Moreover, when the matrix coefficients are given functions of $(x,t)\in Q_{T}$, and the algebraic multiplicities of the characteristic eigenvalues are independent of $(x,t,\xi)$, Kano \cite{kano} has shown that the hyperbolicity is necessary for the $L^{2}$-well posedness. Also, Metivier \cite{guy} has shown that, having real eigenvalues is necessary for the well-posedness of the nonlinear Cauchy problem.\\
Consider the inviscid Cattaneo-Christov system and let $U\in\mathcal{O}$. Then, every linear equation of the form 
\begin{align}
A^{0}(U)V_{t}+A^{i}(U)\partial_{i}V+D(U)V=0\label{eq:124}
\end{align} 
was shown to have real eigenvalues of constant algebraic multiplicites with respect to $(x,t,\xi)$ for every fixed $U\in\mathcal{O}$ (see \cite{angeles2021nonhyperbolicity}). Furthermore, whenever the fixed state $U\in\mathcal{O}$ is of the form $U=(\rho, v,\theta,q)$ with $q\neq 0$, it was shown that \eqref{eq:124} is non-hyperbolic. Then, according with \cite{kano}, for any of these states ($q\neq 0$) no $L^{2}$ energy estimate can exist. Consequently, unless $U=(\rho, v, \theta, 0)$, no linearization (eq. \eqref{eq:124}) of the Cattaneo-Christov system can be understood as the vanishing viscosity limit of the extended equation \eqref{eq:121}.

\section{Final comments and conclusions}
In this work we have provided an alternative fixed point argument to deal with the local existence and uniqueness of the Cauchy problem for a system of quasilinear equations. We have highlighted the fact that the \emph{partial symmetrizability} of \eqref{eq:11} is enough to conclude. Hence, the hyperbolicity of the system without diffusion (i.e. formally setting $B^{ij}=0$ for all $i,j=1,..d$ in equation \eqref{eq:11}) is not a required property for the local well-posedness.\\
Notice that, by assuming a linearization of the form \eqref{eq:13} as in \cite{kawa}, it is suggested that only partial symmetrizability is required to conclude the local existence and uniqueness of the Cauchy problem for \eqref{eq:11}. However, in compressible fluid dynamics (where \cite{kawa} has a wide range of applicability), such requirement is trivially satisfied because the existence of a conservative structure and a convex entropy, for the well known models, assure the existence of a Friedrichs symmetrizer (as the Hessian of the entropy) (cf. \cite{daf}, \cite{serre}). In this case, the partial symmetrizability and the hyperbolicity of the inviscid models are always given. For example, when the quasilinear system \eqref{eq:11} can be derived from a set of viscous conservation laws and is a symmetrizable system, Kawashima and Shizuta \cite{ka} were able to formulate a simple sufficient condition (the so called \emph{N condition}) under which \eqref{eq:11} can be put in the following normal form
\begin{equation}
\label{eq:kawade}
\begin{aligned}
A_{1}^{0}(u,v)\partial_{t}u&+A^{i}_{11}(u,v)\partial_{i}u=f_{1}(U,D_{x}v),\\
A_{2}^{0}(u,v)\partial_{t}v&-B_{22}^{ij}(u,v)\partial_{i}\partial_{j}v=f_{2}(U,D_{x}U).
\end{aligned}
\end{equation}
Thus, the local existence and uniqueness for the Cauchy problem is reduced to that of a system of the form \eqref{eq:kawade}.\\
According with \cite{kawa}, the viscous Cattaneo-Christov system may be classified as \emph{hyperbolic-parabolic}. Of course, in such system there is an interplay between hyperbolic and parabolic variables but, as it was argued in section \ref{impvis}, although the Cauchy problem for the viscous Cattaneo-Christov system (both linear and quasilinear cases) is well-posed in $L^{2}$, this property has no influence in the $L^{2}$-well-posedness for the inviscid version of the system. For this reasons, we argue that a more precise classification for equations such as the viscous Cattaneo-Christov system (i.e. equations whose first order associated system \eqref{eq:124} is not hyperbolic), would be \emph{partially parabolic} systems.\\
Regarding the fixed point result presented in Theorem \ref{fixedpo}, we have shown that, it is not necessary for $\mathcal{T}$ or even $\mathcal{T}^{2}$ to be contraction maps. Is enough to verify if $\mathcal{T}^{2}$ satisfies  \eqref{eq:25}. Furthermore, the particular formulation of Theorem \ref{fixedpo}, was developed with the only purpose of compensate the lack of continuity of $\mathcal{T}:X\rightarrow X$ produced by the discrepancy between the \emph{low} norm that defines $Y$ and the \emph{high} norm that defines $X$. For this reason, \eqref{eq:24} was motivated from the notion of \emph{closable operator} in linear functional analysis (see \cite[Section 5.1]{weidmann}, for example). Although $\widehat{\mathcal{T}}$ doesn't have to be a linear operator, it is true that its \emph{graph} is a closed subset of $Y\times Y$. Moreover, it is easy to show that $\widehat{\mathcal{T}}$ is the smallest closed extension of $\mathcal{T}$. Another similarity with closed linear operators was described in Remark \ref{similarlinear} because it was shown that $X_{\infty}$ is a closed subset of $Y$ as consequence of the Lipschitz continuity of $\mathcal{T}$ (cf. \cite[Theorem 5.2]{weidmann}). However, notice that the Lipschitz continuity (or continuity to that matter) of $\mathcal{T}$ is not part of the hypothesis of Theorem \ref{fixedpo}, this was a particular property of the operator $\mathcal{T}$ defined in \eqref{eq:81}-\eqref{eq:83}.\\
The presented formulation of the fixed point theorem also seems to be applicable in the case of a possible fixed point theorem with an iteration inequality of the form 
\[a_{k}\leq\alpha_{0}(a_{k-1}+a_{k-2}+...+a_{k-j})\]
for all $k\geq j$ and some $0<\alpha_{0}<1$ small enough.\\
Through the notion of the extension $\widehat{\mathcal{T}}$ introduced in the formulation of Theorem \ref{fixedpo}, we understood that the best conclusion drawn from the interplay between the \emph{boundedness in the high norm} and \emph{contraction in the low norm} processes,  is  the existence of  the unique fixed point $U_{\infty}$ in $X_{\infty}$, for the extension $\widehat{\mathcal{T}}$. Then, the energy estimate combined with the surjectivity (i.e $\widehat{\mathcal{T}}(U_{\infty})=U_{\infty}$) imply that $U_{\infty}\in X$.
\section{Appendix A: Parabolic energy estimate}
In this section, an a priori estimate of the solution $u$ of the Cauchy problem \eqref{eq:41}-\eqref{eq:42} in the space $\mathcal{P}_{m}(T)$ is obtained.
First, we establish \eqref{eq:441} with the following extra regularity assumption, 
\begin{itemize}
	\item [\textbf{ER}] $u\in\mathcal{P}_{m+1}(T)$, $u_{0}\in H^{m+1}$ and $f\in L^{2}(0,T;H^{m})$.
\end{itemize}
Then, in order to conclude the estimate for $u\in\mathcal{P}_{m}(T)$ and $f\in L^{2}(0,T;H^{m-1})$, a standard mollification argument can be used.\\
By \textbf{H6}, $B^{ij}(t)\in\widehat{H}^{s}$ for a.a. $t\in[0,T]$. On the other hand, \textbf{ER} assures that $\partial_{i}\partial_{j}u(t)\in H^{m}$ for a.a. $t\in[0,T]$. In particular, it holds that $s>\frac{d}{2}$ and thus, theorem 3.1 is applicable. Hence  $B^{ij}(t)\partial_{i}\partial_{j}u(t)\in H^{m}$ for a.a $t\in[0,T]$. The same reasoning leads us to conclude that $A^{0}(t)\partial_{t}u(t)\in H^{m}$  for a.a. $t\in[0,T]$. Therefore, the following procedure is justified.\\
Given that \textbf{H1} implies the existence of $(A^{0})^{-1}(t)$ for all $t\in[0,T]$, we can multiply equation \eqref{eq:41} by $(A^{0})^{-1}$ then, apply the operator $\partial_{x}^{\alpha}\cdot$ to this equation, use Leibniz's rule together with the definition of commutator and multiply the result by $A^{0}$ to get
\begin{align}
A^{0}(\partial_{x}^{\alpha}u)_{t}-B^{ij}\partial_{i}\partial_{j}(\partial_{x}^{\alpha}u)&=A^{0}\partial_{x}^{\alpha}\left((A^{0})^{-1}[f-A^{i}\partial_{i}u-Du]\right)\nonumber\\
&+A^{0}G_{\alpha}((A^{0})^{-1}B^{ij},\partial_{i}\partial_{j}u).\label{eq:47}
\end{align}
Observe that \eqref{eq:47} coincides with \eqref{eq:41} when $\alpha=0$. Since both $B^{ij}$ and $(A^{0})^{-1}$ belong to the space $\widehat{H}^{s}$ for a.a. $t\in[0,T]$, theorem 3.1 assures that $(A^{0})^{-1}B^{ij}\in\widehat{H}^{s}$. Thus, we can take $\xi=(A^{0})^{-1}B^{ij}$ and $w:=\partial_{i}\partial_{j}u\in H^{m}$ in theorem 3.3 to obtain that $G_{\alpha}\in H^{m-|\alpha|}$ for every $1\leq m\leq s$ and $|\alpha|\leq m$ as well as the estimate,
\begin{equation}
\|G_{\alpha}\left((A^{0})^{-1}B^{ij},\partial_{i}\partial_{j}u\right)\|_{m-|\alpha|}\leq C\|\nabla\left((A^{0})^{-1}B^{ij}\right)\|_{s-1}\|\partial_{i}\partial_{j}u\|_{m-1}.\label{eq:ce1}
\end{equation}
Then, by theorem 3.1, $A^{0}G_{\alpha}\in H^{m-|\alpha|}$ and
\begin{align}
\|A^{0}G_{\alpha}\left((A^{0})^{-1}B^{ij},\partial_{i}\partial_{j}u\right)\|_{m-|\alpha|}\leq C\|A^{0}\|_{\bar{s}}\|G_{\alpha}\left((A^{0})^{-1}B^{ij},\partial_{i}\partial_{j}u\right)\|_{m-|\alpha|}.\label{eq:ce2}
\end{align}
The estimates for $A^{0}\partial_{x}^{\alpha}\left((A^{0})^{-1}[f-A^{i}\partial_{i}u-Du]\right)$, are obtained in a similar manner. Therefore, we can take the inner product of \eqref{eq:47} with $2\partial_{x}^{\alpha}u$ in $L^{2}$ for every $|\alpha|\leq m$, to get
\begin{align}
\langle A^{0}\partial_{x}^{\alpha}u_{t},2\partial_{x}^{\alpha}u\rangle&-\langle B^{ij}\partial_{i}\partial_{j}\partial_{x}^{\alpha}u,2\partial_{x}^{\alpha}u\rangle=\langle A^{0}\partial_{x}^{\alpha}\left((A^{0})^{-1}f\right),2\partial_{x}^{\alpha}u\rangle\nonumber\\
&-\langle A^{0}\partial_{x}^{\alpha}\left((A^{0})^{-1}A^{i}\partial_{i}u\right),2\partial_{x}^{\alpha}u\rangle-\langle A^{0}\partial_{x}^{\alpha}\left((A^{0})^{-1}Du\right),2\partial_{x}^{\alpha}u\rangle\nonumber\\
&+\langle A^{0}G_{\alpha}\left((A^{0})^{-1}B^{ij},\partial_{i}\partial_{j}u\right),2\partial_{x}^{\alpha}u\rangle\label{eq:48}
\end{align}
where 
\begin{equation}
2\langle A^{0}\partial_{x}^{\alpha}u_{t},\partial_{x}^{\alpha}u\rangle=\frac{d}{dt}\langle A^{0}\partial_{x}^{\alpha}u,\partial_{x}^{\alpha}u\rangle-\langle \partial_{x}^{\alpha}u,(\partial_{t}A^{0})\partial_{x}^{\alpha}u\rangle,\label{eq:49}
\end{equation}
holds true since each inner product appearing is a $L^{1}(0,T)$ function by assumptions \textbf{H2} and \textbf{H3}. The commutator term in \eqref{eq:48} can be estimated by means of \eqref{eq:ce1}, \eqref{eq:ce2} and theorem 3.1. Similarly, the rest of the terms in the right hand side of \eqref{eq:48} and the last term in the right hand side of \eqref{eq:49} can be estimated by means of integration by parts,  theorems 3.1 and 3.3 together with assumptions \textbf{H2}-\textbf{H6}, yielding
\small
\begin{align}
\langle A^{0}\partial_{x}^{\alpha}\left((A^{0})^{-1}A^{i}\partial_{i}u\right),2\partial_{x}^{\alpha}u\rangle\leq& C\left\lbrace\sum_{i=1}^{d}\|A^{i}\|_{\bar{s}}\|u\|_{m}\|\nabla u\|_{m}+\|\partial_{i}A^{i}\|_{s-1}\|u\|_{m}^{2}\right.\nonumber\\
+&\left.\|A^{0}\|_{\bar{s}}\|(A^{0})^{-1}\|_{\bar{s}}\sum_{i=1}^{d}\|A^{i}\|_{\bar{s}}\|u\|_{m}^{2}\right\rbrace,\label{eq:inest1}\\
\langle A^{0}\partial_{x}^{\alpha}\left((A^{0})^{-1}Du\right),2\partial_{x}^{\alpha}u\rangle\leq& C\left\lbrace\|D\|_{\bar{s}}+\|A^{0}\|_{\bar{s}}\|(A^{0})^{-1}\|_{\bar{s}}\|D\|_{\bar{s}}\right\rbrace\|u\|_{m}^{2},\label{eq:inest2}\\
\langle\partial_{x}^{\alpha}u,(\partial_{t}A^{0})\partial_{x}^{\alpha}u\rangle\leq&\|\partial_{t}A^{0}\|_{s-1}\|u\|_{m}^{2},\label{eq:inest3}\\
\langle A^{0}G_{\alpha}\left((A^{0})^{-1}B^{ij},\partial_{i}\partial_{j}u\right),2\partial_{x}^{\alpha}u\rangle\leq& C\|A^{0}\|_{\bar{s}}\|(A^{0})^{-1}\|_{\bar{s}}\sum_{i,j=1}^{d}\|B^{ij}\|_{\bar{s}}\|\nabla u\|_{m}\|u\|_{m},\label{eq:inest4}\\
\langle A^{0}\partial_{x}^{\alpha}\left((A^{0})^{-1}f\right),\partial_{x}^{\alpha}u\rangle\leq& C\left\lbrace\|\nabla u\|_{m}+\|A^{0}\|_{\bar{s}}\|(A^{0})^{-1}\|_{\bar{s}}\|u\|_{m}\right\rbrace\|f\|_{m-1},\label{eq:inest5}
\end{align}
\normalsize
valid for all $0\leq|\alpha|\leq m$. Where the constant $C$ is independent of the matrix coefficients, $t\in[0,T]$, $u$ and $f$. By Garding's inequality (see, \cite{agmon}, \cite{mclean} and \cite{mizo}), there are two constants $G_{0}=G_{0}(\eta, g)>0$ and $\gamma_{0}=\gamma_{0}(\eta)\geq 0$ such that
\begin{eqnarray}
-\sum_{|\alpha|\leq m}\langle B^{ij}\partial_{i}\partial_{j}\partial_{x}^{\alpha}u,\partial_{x}^{\alpha}u\rangle&\geq&\sum_{|\alpha|\leq m}G_{0}\|\partial_{x}^{\alpha}u\|_{1}^{2}-\gamma_{0}\|\partial_{x}^{\alpha}u\|^{2}\nonumber\\
&=&G_{0}\left(\|u\|_{m}^{2}+\|\nabla u\|_{m}^{2}\right)-\gamma_{0}\|u\|_{m}^{2}.\label{eq:garding}
\end{eqnarray}
By using estimates \eqref{eq:inest1}- \eqref{eq:garding} into \eqref{eq:48} and \eqref{eq:49} yields
\begin{align}
 \frac{d}{dt}\sum_{|\alpha|\leq m}\langle A^{0}\partial_{x}^{\alpha}u,\partial_{x}^{\alpha}u\rangle+&G_{0}\left(\|u\|_{m}^{2}+\|\nabla u\|_{m}^{2}\right)\leq C\left\lbrace\|\nabla u\|_{m}\|f\|_{m-1}+\sum_{i=1}^{d}\|A^{i}\|_{\bar{s}}\|u\|_{m}^{2}\right.\nonumber\\
	&+\sum_{i=1}^{d}\|A^{i}\|_{\bar{s}}\|u\|_{m}\|\nabla u\|_{m}+\|D\|_{\bar{s}}\|u\|_{m}^{2}+\|\partial_{t}A^{0}\|_{s-1}\|u\|_{m}^{2}\nonumber\\
	&+\left.\sum_{i,j=1}^{d}\|B^{ij}\|_{\bar{s}}\|u\|_{m}\|\nabla u\|_{m}+\gamma_{0}\|u\|_{m}^{2}\right\rbrace,\label{eq:412}
\end{align}
for some constant $C=C(g)>0$ independent of $u$ and $t\in[0,T]$. Now, we use Cauchy's weighted inequality to \emph{isolate} the term $\|\nabla u\|_{m}^{2}$, that is,
\begin{eqnarray*}
\|\nabla u\|_{m}\|f\|_{m-1}&\leq& \epsilon_{1}\|\nabla u\|_{m}^{2}+\frac{\|f\|_{m-1}^{2}}{4\epsilon_{1}},\\
\sum_{i=1}^{d}\|A^{i}\|_{\bar{s}}\|u\|_{m}\|\nabla u\|_{m}&\leq&\epsilon_{2}\|\nabla u\|_{m}^{2}+\frac{2^{d}\sum_{i=1}^{d}\|A^{i}\|_{\bar{s}}^{2}\|u\|_{m}^{2}}{4\epsilon_{2}},\\
\sum_{i,j=1}^{d}\|B^{ij}\|_{\bar{s}}\|u\|_{m}\|\nabla u\|_{m}&\leq&\epsilon_{3}\|\nabla u\|_{m}^{2}+\frac{2^{d^{2}}\sum_{i,j=1}^{d}\|B^{ij}\|_{\bar{s}}^{2}\|u\|_{m}^{2}}{4\epsilon_{3}},
\end{eqnarray*}
where $\epsilon_{1},\epsilon_{2},\epsilon_{3}>0$ are chosen so that $C(\sum_{i=1}^{3}\epsilon_{i})\leq\frac{1}{2}G_{0}$. Therefore,
 \begin{equation}
 \frac{d}{dt}\sum_{|\alpha|\leq m}\langle A^{0}\partial_{x}^{\alpha}u,\partial_{x}^{\alpha}u\rangle+\frac{G_{0}}{2}\left(\|u\|_{m}^{2}+\|\nabla u\|_{m}^{2}\right)\leq C\left\lbrace\left(\mu_{0}(t)+\mu_{1}(t)\right)\|u\|_{m}^{2}+\|f\|_{m-1}^{2}\right\rbrace,\label{eq:415}
\end{equation}
where $\mu_{0},\mu_{1}\in L^{1}(0,T)$, as consequence of \textbf{H2}-\textbf{H6}. Integrate with respect to $t\in[0,T]$, use \textbf{H1} and apply Gronwall's inequality to obtain,
\begin{align}
\|u(t)\|_{m}^{2}+&\int_{0}^{t}\left(\|u(\tau)\|_{m}^{2}+\|\nabla u(\tau)\|_{m}^{2}\right)d\tau \leq\nonumber\\
&C_{0} e^{C_{0}\int_{0}^{t}(\mu_{0}(\tau)+\mu_{1}(\tau))d\tau}\left\lbrace\|u_{0}\|_{m}^{2}+\int_{0}^{t}\|f(\tau)\|_{m-1}^{2}d\tau\right\rbrace,\label{eq:417}
\end{align}
for all $t\in[0,T]$, where $C_{0}=C_{0}(\eta,g,a_{0})$ is such that 
\begin{equation}
C_{0}\geq\frac{\max\left\lbrace a_{1},C\right\rbrace}{\min\left\lbrace a_{0},\frac{G_{0}}{2}\right\rbrace}\label{eq:416}.
\end{equation}
\subsection{Estimate for time derivative}\label{time}
Next, we take the inner product in $L^{2}$ of \eqref{eq:47} with $2\partial_{x}^{\alpha}u_{t}$, but in this case for $|\alpha|\leq m-1$,
\begin{align}
&\langle A^{0}\partial_{x}^{\alpha}u_{t},2\partial_{x}^{\alpha}u_{t}\rangle-\langle B^{ij}\partial_{i}\partial_{j}(\partial_{x}^{\alpha}u),2\partial_{x}^{\alpha}u_{t}\rangle=\langle A^{0}\partial_{x}^{\alpha}\left((A^{0})^{-1}f\right),2\partial_{x}^{\alpha}u_{t}\rangle\nonumber\\
-&\langle A^{0}\partial_{x}^{\alpha}\left((A^{0})^{-1}A^{i}\partial_{i}u\right),2\partial_{x}^{\alpha}u_{t}\rangle-\langle A^{0}\partial_{x}^{\alpha}\left((A^{0})^{-1}Du\right),2\partial_{x}^{\alpha}u_{t}\rangle\nonumber\\
+&\langle A^{0}G_{\alpha}\left((A^{0})^{-1}B^{ij},\partial_{i}\partial_{j}u\right),2\partial_{x}^{\alpha}u_{t}\rangle.\label{eq:418}
\end{align}
In order to properly estimate \eqref{eq:418} we need to work out the term $\langle B^{ij}\partial_{i}\partial_{j}(\partial_{x}^{\alpha}u),\partial_{x}^{\alpha}u_{t}\rangle$. Let us begin with the identity
\begin{align}
\langle B^{ij}\partial_{i}\partial_{j}(\partial_{x}^{\alpha}u),\partial_{x}^{\alpha}u_{t}\rangle =-\langle B^{ij}\partial_{j}\partial_{x}^{\alpha}u,\partial_{i}\partial_{x}^{\alpha}u_{t}\rangle -\langle\left(\partial_{i}B^{ij}\right)\partial_{j}\partial_{x}^{\alpha}u,\partial_{x}^{\alpha}u_{t}\rangle,\label{eq:id1}
\end{align}
which is justified by the \textbf{ER} assumption since $\partial_{x}^{\alpha}u_{t}\in H^{1}$. Given that $B^{ij}\in \widehat{H}^{s}$ and $s-1>\frac{d}{2}$, theorem 3.1 and corollary 1 assure that, $ B^{ij}\partial_{j}\partial_{x}^{\alpha}u\in L^{2}$ and $\left(\partial_{i}B^{ij}\right)\partial_{j}\partial_{x}^{\alpha}u\in L^{2}$, respectively. Using a product rule for the time derivative yields
\begin{align}
\langle B^{ij}\partial_{i}\partial_{j}(\partial_{x}^{\alpha}u),\partial_{x}^{\alpha}u_{t}\rangle&=-\frac{d}{dt}\langle B^{ij}\partial_{j}\partial_{x}^{\alpha}u,\partial_{i}\partial_{x}^{\alpha}u\rangle\label{eq:419}\\
&+\langle\partial_{t}\left(B^{ij}\partial_{j}\partial_{x}^{\alpha}u\right),\partial_{i}\partial_{x}^{\alpha}u\rangle\label{eq:420}\\
&-\langle\left(\partial_{i}B^{ij}\right)\partial_{j}\partial_{x}^{\alpha}u,\partial_{x}^{\alpha}u_{t}\rangle.\nonumber
\end{align}
For \eqref{eq:419} we have the following computations, for each $|\alpha|\leq m-1$,
\begin{align}
-&\frac{d}{dt}\langle B^{ij}\partial_{j}\partial_{x}^{\alpha}u,\partial_{i}\partial_{x}^{\alpha}u\rangle=\frac{d}{dt}\langle\partial_{i}\left(B^{ij}\partial_{j}\partial_{x}^{\alpha}u\right),\partial_{x}^{\alpha}u\rangle\nonumber\\
&=\frac{d}{dt}\langle B^{ij}\partial_{i}\partial_{j}\partial_{x}^{\alpha}u,\partial_{x}^{\alpha}u\rangle\nonumber+\frac{d}{dt}\langle\left(\partial_{i}B^{ij}\right)\partial_{j}\partial_{x}^{\alpha}u,\partial_{x}^{\alpha}u\rangle\nonumber\\
&=\frac{d}{dt}\langle B^{ij}\partial_{i}\partial_{j}\partial_{x}^{\alpha}u,\partial_{x}^{\alpha}u\rangle\nonumber+\langle\partial_{j}\partial_{x}^{\alpha}u_{t},\left(\partial_{i}B^{ij}\right)\partial_{x}^{\alpha}u\rangle+\langle\partial_{j}\partial_{x}^{\alpha}u,\partial_{t}\left(\partial_{i}B^{ij}\partial_{x}^{\alpha}u\right)\rangle\nonumber\\
&=\frac{d}{dt}\langle B^{ij}\partial_{i}\partial_{j}\partial_{x}^{\alpha}u,\partial_{x}^{\alpha}u\rangle\nonumber\\
&-\langle\partial_{x}^{\alpha}u_{t},\left(\partial_{j}\partial_{i}B^{ij}\right)\partial_{x}^{\alpha}u\rangle\label{eq:421}\\
&+\langle\partial_{j}\partial_{x}^{\alpha}u,\left(\partial_{t}\partial_{i}B^{ij}\right)\partial_{x}^{\alpha}u\rangle\label{eq:423}\\
&+\langle\partial_{j}\partial_{x}^{\alpha}u,\left(\partial_{i}B^{ij}\right)\partial_{x}^{\alpha}u_{t}\rangle-\langle\partial_{x}^{\alpha}u_{t},\left(\partial_{i}B^{ij}\right)\partial_{j}\partial_{x}^{\alpha}u\rangle\label{eq:424}.
\end{align}
Where we need to justify the integration by parts carried on terms \eqref{eq:421}-\eqref{eq:424}. First note that each term in \eqref{eq:424} can be justified as in \eqref{eq:id1}. Moreover, due to the symmetry of each matrix $B^{ij}$ the line \eqref{eq:424} equals to zero. In the case of \eqref{eq:421}, assumption \textbf{H5} yields that $\partial_{i}\partial_{j}B^{ij}\in H^{s-2}$ and since $\partial_{x}^{\alpha}u\in H^{1}$ we apply theorem 3.2 directly (with $m=s-2$, $n=1$, $k=0$ and $m+n-k=s-1>\frac{d}{2}$) to get that $\partial_{i}\partial_{j}B^{ij}\partial_{x}^{\alpha}u\in L^{2}$. The same argument applies to \eqref{eq:423} since \textbf{H5} states that $\partial_{t}\partial_{i}B^{ij}\in H^{s-2}$.\\
For the term in \eqref{eq:420}, the \textbf{ER} assumption and the symmetry of each $B^{ij}$ lead us to,
\begin{align*}
\langle\partial_{t}\left(B^{ij}\partial_{j}\partial_{x}^{\alpha}u\right),\partial_{i}\partial_{x}^{\alpha}u\rangle&=\langle\left(\partial_{t}B^{ij}\right)\partial_{j}\partial_{x}^{\alpha}u,\partial_{i}\partial_{x}^{\alpha}u\rangle\nonumber\\
&-\langle\partial_{x}^{\alpha}u_{t},\left(\partial_{j}B^{ij}\right)\partial_{i}\partial_{x}^{\alpha}u\rangle\nonumber\\
&-\langle\partial_{x}^{\alpha}u_{t}, B^{ij}\partial_{i}\partial_{j}\partial_{x}^{\alpha}u\rangle.
\end{align*}
By using the last two identities into \eqref{eq:419} and \eqref{eq:420} yields,
\begin{align}
2\langle B^{ij}\partial_{i}\partial_{j}\partial_{x}^{\alpha}u,\partial_{x}^{\alpha}u_{t}\rangle=&\frac{d}{dt}\langle B^{ij}\partial_{i}\partial_{j}\partial_{x}^{\alpha}u,\partial_{x}^{\alpha}u\rangle-2\langle\left(\partial_{j}B^{ij}\right)\partial_{i}\partial_{x}^{\alpha}u,\partial_{x}^{\alpha}u_{t}\rangle\nonumber\\
-&\langle\partial_{x}^{\alpha}u_{t},\left(\partial_{j}\partial_{i}B^{ij}\right)\partial_{x}^{\alpha}u\rangle+\langle\partial_{j}\partial_{x}^{\alpha}u,\left(\partial_{t}\partial_{i}B^{ij}\right)\partial_{x}^{\alpha}u\rangle\nonumber\\
+&\langle\left(\partial_{t}B^{ij}\right)\partial_{j}\partial_{x}^{\alpha}u,\partial_{i}\partial_{x}^{\alpha}u\rangle.\label{eq:425}
\end{align}
If we use \eqref{eq:425} into \eqref{eq:418} we are left with the identity,
\begin{equation}
\label{eq:utest}
\begin{aligned}
\langle A^{0}\partial_{x}^{\alpha}u,\partial_{x}^{\alpha}u\rangle&-\frac{d}{dt}\langle B^{ij}\partial_{i}\partial_{j}\partial_{x}^{\alpha}u,\partial_{x}^{\alpha}u\rangle=\langle A^{0}\partial_{x}^{\alpha}\left((A^{0})^{-1}f\right),2\partial_{x}^{\alpha}u_{t}\rangle\\
&-\langle A^{0}\partial_{x}^{\alpha}\left((A^{0})^{-1}A^{i}\partial_{i}u\right),2\partial_{x}^{\alpha}u_{t}\rangle+\langle A^{0}\partial_{x}^{\alpha}\left((A^{0})^{-1}Du\right),2\partial_{x}^{\alpha}u_{t}\rangle\\
&+\langle A^{0}G_{\alpha}\left((A^{0})^{-1}B^{ij},\partial_{i}\partial_{j}u\right),2\partial_{x}^{\alpha}u_{t}\rangle-2\langle\left(\partial_{j}B^{ij}\right)\partial_{i}\partial_{x}^{\alpha}u,\partial_{x}^{\alpha}u_{t}\rangle\\
&-\langle\partial_{x}^{\alpha}u_{t},\left(\partial_{j}\partial_{i}B^{ij}\right)\partial_{x}^{\alpha}u\rangle+\langle\partial_{j}\partial_{x}^{\alpha}u,\left(\partial_{t}\partial_{i}B^{ij}\right)\partial_{x}^{\alpha}u\rangle\\
&+\langle\left(\partial_{t}B^{ij}\right)\partial_{j}\partial_{x}^{\alpha}u,\partial_{i}\partial_{x}^{\alpha}u\rangle,\quad\mbox{for all}~|\alpha|\leq m-1.
\end{aligned}
\end{equation}
As it was previously done for \eqref{eq:48}, we estimate the right hand side of \eqref{eq:utest}. By taking into account that $|\alpha|\leq m-1$ and by \textbf{H2}, there is a positive constant $C=C(g)$ such that
\begin{align}
\langle A^{0}\partial_{x}^{\alpha}u,\partial_{x}^{\alpha}u\rangle&-\frac{d}{dt}\langle B^{ij}\partial_{i}\partial_{j}\partial_{x}^{\alpha}u,\partial_{x}^{\alpha}u\rangle\leq C\left\lbrace\sum_{i=1}^{d}\|A^{i}\|_{\bar{s}}\|\nabla u\|_{m-1}\|u_{t}\|_{m-1}\right.\nonumber\\
&+\|f\|_{m-1}\|u_{t}\|_{m-1}+\sum_{i,j=1}^{d}\|B^{ij}\|_{\bar{s}}\|\nabla u\|_{m-1}\|u_{t}\|_{m-1}\nonumber\\
&+\sum_{i,j=1}^{d}\|\partial_{t}B^{ij}\|_{s-1}\|\nabla u\|_{m-1}^{2}+\sum_{i,j=1}^{d}\|\partial_{t}B^{ij}\|_{s-1}\|\nabla u\|_{m-1}\|u\|_{m-1}\nonumber\\
&+\left. +\|D\|_{\bar{s}}\|u\|_{m-1}\|u_{t}\|_{m-1}+\sum_{i,j=1}^{d}\|B^{ij}\|_{\bar{s}}\|u\|_{m-1}\|u_{t}\|_{m-1}\right\rbrace.\label{eq:435}
\end{align}
As we did for the estimate in \eqref{eq:412}, we apply Cauchy's weighted inequality in order to \emph{isolate} the term $\|u_{t}\|_{m-1}^{2}$. Then, we add all the terms in \eqref{eq:435} with respect to $|\alpha|\leq m-1$, use \textbf{H1} and integrate from $0$ to $t\in[0,T]$ to obtain
\begin{eqnarray}
\int_{0}^{t}\|u_{t}(\tau)\|_{m-1}^{2}d\tau&-&\sum_{|\alpha|\leq m-1}\langle B^{ij}(t)\partial_{i}\partial_{j}\partial_{x}^{\alpha}u(t),\partial_{x}^{\alpha}u(t)\rangle\leq\nonumber\\
&\leq&-\sum_{|\alpha|\leq m-1}\langle B^{ij}(0)\partial_{i}\partial_{j}\partial_{x}^{\alpha}u_{0},\partial_{x}^{\alpha}u_{0}\rangle+C\int_{0}^{t}\|f(\tau)\|_{m-1}^{2}d\tau\nonumber\\
&+&C\int_{0}^{t}\left(\mu_{0}(\tau)+\mu_{1}(\tau)\right)\|u(\tau)\|_{m}^{2}d\tau,\label{eq:437}
\end{eqnarray}
for some constant $C=C(g,a_{0})$. Notice that the \textbf{ER} assumption has been used to justify that term $\langle B^{ij}(0)\partial_{i}\partial_{j}\partial_{x}^{\alpha}u_{0},\partial_{x}^{\alpha}u_{0}\rangle$ is finite for each $|\alpha|\leq m-1$. As consequence of \textbf{H6} and integration by parts, there is some positive constant $C=C(g)$ such that,
\begin{align*}
-\sum_{|\alpha|\leq m-1}\langle B^{ij}(0)\partial_{i}\partial_{j}\partial_{x}^{\alpha}u_{0},\partial_{x}^{\alpha}u_{0}\rangle\leq C\|u_{0}\|_{m}^{2}.
\end{align*}
This estimate, together with Garding's inequality (see equation \eqref{eq:garding}), applied into \eqref{eq:437} yields
\begin{eqnarray}
\int_{0}^{t}\|u_{t}(\tau)\|_{m-1}^{2}d\tau&+&G_{0}\left(\|u(t)\|_{m-1}^{2}+\|\nabla u(t)\|_{m-1}^{2}\right)\leq \gamma_{0}\|u(t)\|_{m-1}^{2}\nonumber\\
&+&C\left\lbrace\|u_{0}\|_{m}^{2}+\int_{0}^{t}\|f(\tau)\|_{m-1}^{2}d\tau\right.\nonumber\\
&+&\left.\int_{0}^{t}(\mu_{0}(\tau)+\mu_{1}(\tau))\|u(\tau)\|_{m}^{2}d\tau\right\rbrace\label{eq:439}
\end{eqnarray}
for all $t\in[0,T]$. The term, $\gamma_{0}\|u(t)\|_{m-1}^{2}$, can be controlled by integrating the inequality,
\begin{align*}
\frac{d}{dt}\|u\|_{m-1}^{2}\leq C\left(\epsilon\|u_{t}\|_{m-1}^{2}+\frac{\|u\|_{m-1}^{2}}{4\epsilon}\right),\nonumber
\end{align*}
for some $\epsilon>0$ small enough. Therefore, by Gronwall's inequality, 
\begin{align}
\int_{0}^{t}\|u_{t}(\tau)\|_{m-1}^{2}d\tau&+\|u(t)\|_{m}^{2}\leq\nonumber\\
& Ce^{C\int_{0}^{t}\left(\mu_{0}(\tau)+\mu_{1}(\tau)\right)}\left\lbrace\|u_{0}\|_{m}^{2}+\int_{0}^{t}\|f(\tau)\|_{m-1}^{2}d\tau\right\rbrace\label{eq:440}
\end{align}
for all $t\in[0,T]$. Finally, by adding up \eqref{eq:417} and \eqref{eq:440}, we obtain the a priori estimate \eqref{eq:441}.\\
Now, the mollification of $u$ satisfies the following equation
\begin{equation}
A^{0}u_{t}^{\epsilon}+A^{i}\partial_{i}u^{\epsilon}+Du^{\epsilon}-B^{ij}\partial_{i}\partial_{j}u^{\epsilon}=f^{\epsilon}+F^{\epsilon},\label{eq:444}
\end{equation}
where $F^{\epsilon}\rightarrow 0$ in $H^{m}$ for a.a. $t\in[0,T]$ as $\epsilon\rightarrow 0$ (see \cite[Theorem 1.5.1 and Theorem 1.5.6]{cherrier}, for example). We apply estimate \eqref{eq:441} to equation \eqref{eq:444} with source term $f^{\epsilon}+F^{\epsilon}$ and initial condition $u^{\epsilon}(x,0)=u_{0}^{\epsilon}(x)$. Then, by letting $\epsilon\rightarrow 0$, we drop out the \textbf{ER} assumption to conclude.
\section{Appendix B: Energy estimate of linear coupled equation}
In this section we prove Theorem \ref{energydec}. We obtain the energy estimate for equations \eqref{eq:514}-\eqref{eq:516} independent of $0\leq\delta<1$. As before, we assume that $U^{\delta}$ is a solution of \eqref{eq:514}-\eqref{eq:516} satisfying the \textbf{ER} assumption. For simplicity we write $U=(u,v,w)^{\top}$ instead of $U^{\delta}=(u^{\delta},v^{\delta},w^{\delta})^{\top}$. We begin by estimating \eqref{eq:514}. Then, consider the following identity
\begin{align}
\frac{d}{dt}\langle A_{1}^{0}\partial_{x}^{\alpha}u,\partial_{x}^{\alpha}u\rangle&-\delta\langle\Delta\partial_{x}^{\alpha}u,2\partial_{x}^{\alpha}u\rangle=\langle(A_{1}^{0}\partial_{x}^{\alpha}\left((A_{1}^{0})^{-1}f_{1}\right),2\partial_{x}^{\alpha}u\rangle\nonumber\\
&-\langle A_{1}^{0}\partial_{x}^{\alpha}\left((A_{1}^{0})^{-1}A_{11}^{i}\partial_{i}u\right),2\partial_{x}^{\alpha}u\rangle-\langle A_{1}^{0}\partial_{x}^{\alpha}\left((A_{1}^{0})^{-1}A_{12}^{i}\partial_{i}v\right),2\partial_{x}^{\alpha}u\rangle\nonumber\\
&+\delta\langle A_{1}^{0}G_{\alpha}\left((A_{1}^{0})^{-1},\Delta u\right),2\partial_{x}^{\alpha}u\rangle+\langle \partial_{x}^{\alpha}u,(\partial_{t}A_{1}^{0})\partial_{x}^{\alpha}u\rangle.\label{eq:520}
\end{align}
Given that, there is no diffusion term for $u$, we avoid of any term involving $\|\nabla u\|_{m}$ when estimating \eqref{eq:520}. For example, the second term in the right hand side of \eqref{eq:520} can be written as 
\begin{eqnarray}
\langle A_{1}^{0}\partial_{x}^{\alpha}\left((A_{1}^{0})^{-1}A_{11}^{i}\partial_{i}u\right),2\partial_{x}^{\alpha}u\rangle&=&\langle A_{11}^{i}\partial_{x}^{\alpha}\partial_{i}u,2\partial_{x}^{\alpha}u\rangle\nonumber\\
&+&\langle A_{1}^{0}G_{\alpha}\left((A_{1}^{0})^{-1}A_{11}^{i},\partial_{i}u\right), 2\partial_{x}^{\alpha}u\rangle,\nonumber
\end{eqnarray}
where we have to integrate by parts the first term in the right hand side of this identity and use assumption \textbf{I}, so that,
\begin{eqnarray}
\langle A_{1}^{0}\partial_{x}^{\alpha}\left((A_{1}^{0})^{-1}A_{11}^{i}\partial_{i}u\right),2\partial_{x}^{\alpha}u\rangle&=&-\langle(\partial_{i}A_{11}^{i})\partial_{x}^{\alpha}u,\partial_{x}^{\alpha}u\rangle\nonumber\\
&+&\langle A_{1}^{0}G_{\alpha}\left((A_{1}^{0})^{-1}A_{11}^{i},\partial_{i}u\right), 2\partial_{x}^{\alpha}u\rangle.\nonumber
\end{eqnarray}
The Cauchy-Schwarz inequality, together with theorems 3.1 and 3.3 and assumption \textbf{II} yield 
\begin{eqnarray}
\sum_{|\alpha|=0}^{m}\langle A_{1}^{0}\partial_{x}^{\alpha}\left((A_{1}^{0})^{-1}A_{11}^{i}\partial_{i}u\right),2\partial_{x}^{\alpha}u\rangle&\leq&C\left(\sum_{i=1}^{d}\|A^{i}_{11}\|_{\bar{s}}\right)\|u\|^{2}_{m},\label{eq:521}
\end{eqnarray}
for some positive constant $C=C(g)$. For the term involving $f_{1}$ in \eqref{eq:520} we have that,
\begin{equation}
\sum_{|\alpha|=0}^{m}\langle(A_{1}^{0}\partial_{x}^{\alpha}\left((A_{1}^{0})^{-1}f_{1}\right),2\partial_{x}^{\alpha}u\rangle\leq C\|f_{1}\|_{m}\|u\|_{m},
\label{eq:522}
\end{equation}
which is different from the parabolic case (see \eqref{eq:inest5}) because we are not integrating by parts to relief $f_{1}$ from one degree of differentiation. The rest of the terms in \eqref{eq:520} can be dealt similarly as in the a priori estimate for the parabolic case in Appendix A. Therefore,
\begin{eqnarray}
\frac{d}{dt}\sum_{|\alpha|=0}^{m}\langle A_{1}^{0}\partial_{x}^{\alpha}u,\partial_{x}^{\alpha}u\rangle+2\delta\|\nabla u\|_{m}^{2}&\leq& C\left\lbrace\|f_{1}\|_{m}\|u\|_{m}+\left(\sum_{i=1}^{d}\|A^{i}_{11}\|_{\bar{s}}\right)\|u\|_{m}^{2}\right.\nonumber\\
&+&+\|\partial_{t}A_{1}^{0}\|_{s-1}\|u\|_{m}^{2}+\delta\|u\|_{m}\|\nabla u\|_{m}\nonumber\\
&+&\left.\left(\sum_{i=1}^{d}\|A^{i}_{12}\|_{\bar{s}}\right)\|u\|_{m}\left(\|v\|_{m}+\|\nabla v\|_{m}\right)\right\rbrace,\nonumber\\
\label{eq:523}
\end{eqnarray}
for some positive constant $C=C(g)$. Since, 
\begin{align*}
\delta\|u\|_{m}\|\nabla u\|_{m}\leq\delta\left(\frac{\|u\|_{m}^{2}}{2\epsilon}+\frac{\epsilon\|\nabla u\|_{m}^{2}}{2}\right),\quad\mbox{for all}~\epsilon>0,
\end{align*}
if we take $\epsilon=\frac{1}{C}$, it follows that,
\begin{align}
\frac{d}{dt}\sum_{|\alpha|=0}^{m}\langle A_{1}^{0}\partial_{x}^{\alpha}u,\partial_{x}^{\alpha}u\rangle&\leq C\left\lbrace\|f_{1}\|_{m}^{2}+\|u\|_{m}^{2}+\left(\sum_{i=1}^{d}\|A_{11}^{i}\|_{\bar{s}}\right)\|u\|_{m}^{2}+\|\partial_{t}A_{1}^{0}\|_{s-1}\|u\|_{m}^{2}\right.\nonumber\\
&+\left. \left(\sum_{i=1}^{d}\|A_{12}^{i}\|_{\bar{s}}\right)\|u\|_{m}(\|v\|_{m}+\|\nabla v\|_{m})\right\rbrace.\label{eq:526}
\end{align}
In the same manner, we obtain the energy estimate for $w$,
\begin{align}
\frac{d}{dt}\sum_{|\alpha|=0}^{m}\langle A_{3}^{0}\partial_{x}^{\alpha}u,\partial_{x}^{\alpha}u\rangle&\leq C\left\lbrace\|f_{3}\|_{m}^{2}+\|w\|_{m}^{2}+\left(\sum_{i=1}^{d}\|A_{33}^{i}\|_{\bar{s}}\right)\|w\|_{m}^{2}+\|\partial_{t}A_{3}^{0}\|_{s-1}\|w\|_{m}^{2}\right.\nonumber\\
&+\left. \left(\sum_{i=1}^{d}\|A_{32}^{i}\|_{\bar{s}}\right)\|w\|_{m}(\|v\|_{m}+\|\nabla v\|_{m})+\|D_{0}\|_{\bar{s}}\|w\|_{m}^{2}\right\rbrace.\label{eq:529}
\end{align}
To derive the energy estimate for $v$ observe that, since $u,w\in\mathcal{C}([0,T];H^{m})$, we can take $f=f_{2}-A^{i}_{21}\partial_{i}u-A^{i}_{23}\partial_{i}w$ an treat the estimate as in Appendix A. In particular, we follow the steps that led us to \eqref{eq:412}. Therefore, 
\begin{align}
\frac{d}{dt}\sum_{|\alpha|=0}^{m}\langle A_{1}^{0}\partial_{x}^{\alpha}u,\partial_{x}^{\alpha}u\rangle\leq\frac{C\epsilon}{2}\|\nabla v\|_{m}^{2}+C\left\lbrace\|f_{1}\|_{m}^{2}+(\mu_{0}(t)+\mu_{1}(t))\left(\|u\|_{m}^{2}+\|v\|_{m}^{2}\right)\right\rbrace,\label{eq:531}
\end{align}
\begin{align}
\frac{d}{dt}\sum_{|\alpha|=0}^{m}\langle A_{2}^{0}\partial_{x}^{\alpha}u,\partial_{x}^{\alpha}u\rangle&+G_{0}\left(\|v\|_{m}^{2}+\|\nabla v\|_{m}^{2}\right)\leq\frac{4C\epsilon}{2}\|\nabla v\|_{m}^{2}+C\left\lbrace\|f_{2}\|_{m-1}^{2}\right.\nonumber\\
&+C\left.(\mu_{0}(t)+\mu_{1}(t))\left(\|u\|_{m}^{2}+\|v\|_{m}^{2}+\|w\|_{m}^{2}\right)\right\rbrace,
\label{eq:532}
\end{align}
\begin{align}
\frac{d}{dt}\sum_{|\alpha|=0}^{m}\langle A_{3}^{0}\partial_{x}^{\alpha}u,\partial_{x}^{\alpha}u\rangle\leq\frac{C\epsilon}{2}\|\nabla v\|_{m}^{2}+C\left\lbrace\|f_{3}\|_{m}^{2}+(\mu_{0}(t)+\mu_{1}(t))\left(\|w\|_{m}^{2}+\|v\|_{m}^{2}\right)\right\rbrace.\label{eq:533}
\end{align}
We add up \eqref{eq:531}, \eqref{eq:532}, \eqref{eq:533} and take $\epsilon>0$ such that $G_{0}-3C\epsilon=\frac{1}{2}$ to obtain
\begin{align}
\frac{d}{dt}\mathcal{E}_{m}^{2}(u,v,w)+\|v\|_{m}^{2}+\|\nabla v\|_{m}^{2}&\leq C_{1}\left\lbrace\mathcal{F}_{m}^{2}(f_{1},f_{2},f_{3})\right.\nonumber\\
&+\left.(\mu_{0}(t)+\mu_{1}(t))\mathcal{E}_{m}^{2}(u,v,w)\right\rbrace,\label{eq:534}
\end{align}
for some positive constant $C_{1}=C_{1}(g,\eta,a_{0})$, where
\begin{align*}
\mathcal{E}_{m}^{2}(u,v,w):=\sum_{|\alpha|=0}^{m}\langle A_{1}^{0}\partial_{x}^{\alpha}u,\partial_{x}^{\alpha}u\rangle+\sum_{|\alpha|=0}^{m}\langle A_{2}^{0}\partial_{x}^{\alpha}u,\partial_{x}^{\alpha}u\rangle+\sum_{|\alpha|=0}^{m}\langle A_{3}^{0}\partial_{x}^{\alpha}u,\partial_{x}^{\alpha}u\rangle.
\end{align*} 
After integrating we use \textbf{III} and Gronwall's inequality to obtain, 
\begin{equation*}
\|u(t)\|_{m}^{2}+\|v(t)\|_{m}^{2}+\|w(t)\|_{m}^{2}+\int_{0}^{t}\|v(\tau)\|_{m}^{2}+\|\nabla v(\tau)\|_{m}^{2}d\tau\leq K_{0}^{2}(t)\Phi_{0}^{2}(t),
\end{equation*}
for all $t\in[0,T]$. We proceed as in section \ref{time} to obtain the estimates for $U_{t}$. 
Hence, we have proven that, for every $0\leq\delta<1$, the solution, $(u^{\delta},v^{\delta},w^{\delta})^{\top}(t)$, of the strongly parabolic extended system \eqref{eq:512}, satisfies \eqref{eq:543} for all $t\in[0,T]$. This concludes the result.
\section*{Acnowledgments}
Thanks to professor Ram\'on G. Plaza for his comments and guidance. This work was supported by CONACyT (M\'exico), through a scholarship for doctoral studies, grant no. 465484, by DGAPA-UNAM program PAPIIT, grant IN100318 and by CONACyT through the postdoctoral research project  A1-S-10457.

\bibliographystyle{plain} 
\bibliography{feVSkawabiblio}

\begin{thebibliography}{10}

\bibitem{agmon}
S.~Agmon.
\newblock {\em Lectures on elliptic boundary value problems}.
\newblock AMS Chelsea Publishing, 1965.

\bibitem{angeles2021nonhyperbolicity}
F.~Angeles.
\newblock Non-hyperbolicity of the inviscid {C}attaneo-{C}hristov system for
  compressible fluid flow in several space dimensions.
\newblock {\em Q. J. Mech. Appl. Math.}, 2022.
\newblock hbac005.

\bibitem{amp}
F.~Angeles, C.~M\'alaga, and R.~Plaza.
\newblock Strict dissipativity of {C}attaneo-{C}hristov systems for
  compressible fluid flow.
\newblock {\em J. Phys. A: Math. Theor.}, 53, 2020.

\bibitem{benzoni}
S.~Benzoni-Gavage and D.~Serre.
\newblock {\em Multidimensional {H}yperbolic {P}artial {D}ifferential
  {E}quations}.
\newblock Oxford {U}niversity {P}ress, 2007.

\bibitem{cat}
C.~Cattaneo.
\newblock Sulla conduzione de calore.
\newblock {\em Atti Semin. Mat. Fis. della Universit\ `{a} di Modena},
  3:83--101, 1948.

\bibitem{chandrasek}
D.~S. Chandrasekharaiah.
\newblock Thermoelasticity with second sound: {A} review.
\newblock {\em Appl. Mech. Rev.}, 39(3):355--376, 1986.

\bibitem{christov}
C.~I. Christov.
\newblock On frame indifferent formulation of the{M}axwell-{C}attaneo model of
  finite-speed heat conduction.
\newblock {\em Mech. Research Comm}, 36:481--486, 2009.

\bibitem{chjo}
C.~I. Christov and P.~M. Jordan.
\newblock Heat conduction paradox involving second-sound propagation in moving
  media.
\newblock {\em Phys. Rev. Lett}, 94:154301, 2005.

\bibitem{daf}
C.~Dafermos.
\newblock {\em Hyperbolic Conservation Laws in Continuum Physics}.
\newblock Springer, 2016.

\bibitem{evans}
L.~C. Evans.
\newblock {\em Partial Differential Equations}.
\newblock American Mathematical Society, 2010.

\bibitem{gonzalez}
O.~Gonz\'alez and A.~Stuart.
\newblock {\em A First Course in Continuum Mechanics}.
\newblock Cambridge University Press, 2010.

\bibitem{zhan}
S.~Han, L.~Zheng, C.~Li, and X.~Zhang.
\newblock Coupled flow and heat transfer in viscoelastic fluid with
  cattaneo-christov heat flux model.
\newblock {\em Appl. Math. Lett}, 38:87--93, 2014.

\bibitem{tuomas}
T.~Hyt{\"o}nen, J.~van Neerven, M.~Veraar, and L.~Weis.
\newblock {\em Analysis in {B}anach spaces}.
\newblock Springer, 2016.

\bibitem{itaya1}
N.~Itaya.
\newblock The {E}xistence and {U}niqueness of the {S}olution of the {E}quations
  {D}escribing {C}ompressible {V}iscous {F}luid {F}low.
\newblock {\em Proc. Japan Acad.}, 46:379--382, 1970.

\bibitem{itaya2}
N.~Itaya.
\newblock On the {C}auchy {P}roblem for the {S}ystem of {F}undamental
  {E}quations {D}escribing the {M}ovement of {C}ompressible {V}iscous {F}luid.
\newblock {\em Kodai Math. Sem. Rep.}, 23:60--120, 1971.

\bibitem{jor}
P.M. Jordan.
\newblock Second-sound phenomena in inviscid, thermally relaxing gases.
\newblock {\em Discrete Contin. Dyn. Syst. Ser. B}, 19:2189--2205, 2014.

\bibitem{kano}
T.~Kano.
\newblock A {N}ecessary {C}ondition for the {W}ell-posedness of the {C}auchy
  {P}roblem for the {F}irst {O}rder {H}yperbolic {S}ystem with {M}ultiple
  {C}haracteristics.
\newblock {\em Publ. RIMS, Kyoto Univ.}, 5:149--164, (1969).

\bibitem{kato1}
T.~Kato.
\newblock Linear evolution equations of hyperbolic type ii.
\newblock {\em J. Math. Soc. Japan}, 25, No. 4:648--666, 1973.

\bibitem{katosym}
T.~Kato.
\newblock The {C}auchy {P}roblem for {Q}uasi-linear {S}ymmetric {H}yperbolic
  {S}ystems.
\newblock {\em Arch. Rational Mech. Anal}, 58:181--205, 1975.

\bibitem{kawa}
S.~Kawashima.
\newblock {\em Systems of a hyperbolic parabolic type with applications to the
  equations of magnetohydrodynamics}.
\newblock PhD thesis, Kyoto University, 1983.

\bibitem{ka}
S.~Kawashima and Y.~Shizuta.
\newblock On the normal form of the symmetric hyperbolic-parabolic systems
  associated with the conservation laws.
\newblock {\em Tohoku Math. J.}, 40:449--464, 1987.

\bibitem{otto}
H.-O. Kreiss and J.~Lorenz.
\newblock {\em Initial-{B}oundary {V}alue {P}roblems and the {N}avier-{S}tokes
  {E}quations}.
\newblock SIAM, 2004.

\bibitem{kawa2}
S.~Kwashima and Y.~Ueda.
\newblock Mathematical entropy and {E}uler-{C}attaneo-{M}axwell system.
\newblock {\em Analysis and Applications}, 14, No. 1:101--143, 2016.

\bibitem{lax}
P.~D. Lax.
\newblock {\em Hyperbolic {S}ystems of {C}onservation {L}aws and the
  {M}athematical {T}heory of {S}hock {W}aves}.
\newblock SIAM, 1973.

\bibitem{lionscom}
P.-L. Lions.
\newblock {\em Mathematical {T}opics in {F}luid {M}echanics {V}olume 2
  {C}ompressible {M}odels}.
\newblock Oxford University Press, 1998.

\bibitem{maj}
A.~Majda.
\newblock {\em Compressible {F}luid {F}low and {S}ystems of {C}onservation
  {L}aws in {S}everal {S}pace {V}ariables}.
\newblock Springer-Verlag, 1984.

\bibitem{matsu}
A.~Matsumura.
\newblock {\em Initial value problems for some quasilinear partial differential
  equations in mathematical physics}.
\newblock PhD thesis, Kyoto University, 1980.

\bibitem{matsuni}
A.~Matsumura and T.~Nishida.
\newblock The initial value problem for the equations of motion of viscous and
  heat-conductive gases.
\newblock {\em J. Math. Kyoto Univ.}, 20-1:67--104, 1980.

\bibitem{mclean}
W.~McLean.
\newblock {\em Strongly elliptic systems and boundary integral equations}.
\newblock American Mathematical Society, 2013.

\bibitem{guy}
G.~M\'{e}tivier.
\newblock Remarks on the well-posedness of the non-linear cauchy problem.
\newblock {\em Contemporary Mathematics}, 368:337--356, 2005.

\bibitem{cherrier}
A.~Milani and P.~Cherrier.
\newblock {\em Linear and quasilinear evolution equations in Hilbert spaces}.
\newblock Graduate studies in mathematics, 2012.

\bibitem{mizo}
S.~Mizohata.
\newblock {\em Theory of partial differential equations}.
\newblock Cambridge University Press, 1973.

\bibitem{amorro}
A.~Morro.
\newblock A {T}hermodynamic {A}pproach to {R}ate {E}quations in {C}ontinuum
  {P}hysics.
\newblock {\em J. Phys. Sci. Appl.}, 7:15--23, (2017).

\bibitem{novo}
A.~Novotn\'y and I.~Stra\v skraba.
\newblock {\em Introduction to the {M}athematical {T}heory of {C}ompressible
  {F}luid {F}low}.
\newblock Oxford University Press, 2004.

\bibitem{qin}
Y.~Qin, Z.~Ma, and X.~Yang.
\newblock Exponential stability for nonlinear thermoelastic equations with
  second sound.
\newblock {\em Quart. Appl. Math.}, 11:2502--2513, 2010.

\bibitem{rack}
R.~Racke.
\newblock {\em Lectures on nonlinear evolution equations}.
\newblock Birkhauser, 2015.

\bibitem{garding}
L.~G\aa rding.
\newblock Probl\`{e}me de {C}auchy pour les syst\`{e}mes quasi-lin\'{e}aires
  d\'{} ordre un strictement hyperboliques.
\newblock {\em Les \'{E}quations aux D\`{e}riv\'{e}es Partielles. \'{E}ditions
  du Centre National de la Recherche Scientifique}, pages 33--40, 1963.

\bibitem{serre}
D.~Serre.
\newblock {\em Conservation Laws 1: Hyperbolicity, Entropies, Shock Waves}.
\newblock Cambridge University Press, 2003.

\bibitem{serre2}
D.~Serre.
\newblock Local existence for viscous system of conservation laws:{$H^{s}$-data
  with $s>1+d/2$}.
\newblock {\em Nonlinear partial differential equations and hyperbolic wave
  phenomena Contemph. Math. Amer. Math. Soc.}, 526:339--358, 2010.

\bibitem{serrevis}
D.~Serre.
\newblock The structure of dissipative viscous systems of conservation laws.
\newblock {\em Physica D}, 239:1381--1386, 2010.

\bibitem{ka1}
Y.~Shizuta and S.~Kawashima.
\newblock Systems of equations of hyperbolic-parabolic type with applications
  to the discrete {B}oltzmann equation.
\newblock {\em Hokkaido Math. J.}, 14:249--275, 1985.

\bibitem{stra}
B.~Straughan.
\newblock Acoustic waves in a {C}attaneo-{C}hristov gas.
\newblock {\em Phys. Lett. A}, 374:2667--2669, 2010.

\bibitem{stra2}
B.~Straughan.
\newblock Thermal convection with the cattaneo-christov model.
\newblock {\em Int. J. Heat Mass Transfer}, 53:95--98, 2010.

\bibitem{straughan2}
B.~Straughan.
\newblock {\em Heat {W}aves}.
\newblock Springer, 2011.

\bibitem{stra4}
B.~Straughan.
\newblock Tipping points in {C}attaneo-{C}hristov thermohaline convection.
\newblock {\em Proc. R. Soc. A}, 467:7--18, 2011.

\bibitem{tadmor}
E.~Tadmor, R.~Miller, and R.~Elliot.
\newblock {\em Continuum {M}echanics and {T}hermodynamics}.
\newblock Cambridge University Press, 2012.

\bibitem{tara}
M.~A. Tarabek.
\newblock On the existence of smooth solutions in one-dimensional
  thermoelasticity with second sound.
\newblock {\em Quart. Appl. Math.}, 50:727--742, 1992.

\bibitem{hudja}
A.~I. Vol'pert and S.~I. Hudjaev.
\newblock On the cauchy problem for composite systems of nonlinear differential
  equations.
\newblock {\em Math.USSR Sbornik}, 16:517--544, 1972.

\bibitem{weidmann}
J.~Weidmann.
\newblock {\em Linear {O}perators in {H}ilbert {S}paces}.
\newblock Springer-Verlag, 1976.

\bibitem{weyl}
H.~Weyl.
\newblock Shock waves in arbitrary fluids.
\newblock {\em Communications on Pure and Applied Mathematics}, vol.
  2:103--122, 1949.

\bibitem{yosida}
K.~Yosida.
\newblock {\em Functional Analysis}.
\newblock Springer-Verlag, 1980.

\end{thebibliography}

\end{document}